\newcommand{\tRe}{\textup{Re }}
\newcommand{\sumh}{\sideset{}{^h}\sum}
\newcommand{\sumd}{\sideset{}{^d}\sum}
\newcommand{\N}{\mathcal{N}}
\newcommand{\es}[1]{\begin{equation}\begin{split}#1\end{split}\end{equation}}
\newcommand{\est}[1]{\begin{equation*}\begin{split}#1\end{split}\end{equation*}}
\newcommand{\M}{\mathcal{M}}
\renewcommand{\mod}[1]{~\pr{\textnormal{mod}~#1}}
\newtheorem{thm}{Theorem}[section]
\newtheorem{prop}[thm]{Proposition}
\newtheorem{lem}[thm]{Lemma}
\newtheorem{lemma}[thm]{Lemma}
\theoremstyle{remark}
\newtheorem{rem*}{Remark}
\newcommand{\pr}[1]{\left( #1\right)}
\newcommand{\e}[1]{\operatorname{e}\pr{ #1}}
\newcommand{\bfrac}[2]{\left(\frac{#1}{#2}\right)}
\newcommand{\lt}{\left}
\newcommand{\rt}{\right}
\newcommand{\bsm}{\lt(\begin{smallmatrix}}
\newcommand{\esm}{\end{smallmatrix}\rt)}
\newcommand{\sym}{\textup{sym}}
\newcommand{\Dis}{\textup{Dis}}
\newcommand{\Ctn}{\textup{Ctn}}
\newcommand{\Hol}{\textup{Hol}}
\def\sumh{\operatornamewithlimits{\sum\nolimits^h}}
\newcommand{\sumthree}{\operatorname*{\sum\sum\sum}}
\let\originalleft\left
\let\originalright\right
\renewcommand{\left}{\mathopen{}\mathclose\bgroup\originalleft}
\renewcommand{\right}{\aftergroup\egroup\originalright}
\numberwithin{equation}{section}
\newcommand{\W}{\mathcal W}
\newcommand{\Rep}{\textrm{Re}}
\newcommand{\mL}{\mathcal L}
\newcommand{\fm}{\mathfrak m}
\begin{document}
\title[Low-lying zeros of a large orthogonal family]{Low-lying zeros of a large orthogonal family of automorphic $L$-functions}

\date{
\today}

\author[S. Baluyot]{Siegfred Baluyot}
\address{American Institute of Mathematics\\
San Jose, CA 95112}
\email{{sbaluyot@aimath.org}}

\author[V. Chandee]{Vorrapan Chandee}
\address{Mathematics Department \\ Kansas State University \\ Manhattan, KS 66503}
\email{chandee@ksu.edu}

\author[X. Li]{Xiannan Li}
\address{Mathematics Department \\ Kansas State University \\ Manhattan, KS 66503}
\email{xiannan@ksu.edu }

\subjclass[2010]{11M50, 11F11, 11F72 }
\keywords{One-level density of low-lying zeros, Orthogonal family, $GL(2)$ $L$-functions.}

\allowdisplaybreaks
\numberwithin{equation}{section}
\begin{abstract}
We study a new orthogonal family of $L$-functions associated with holomorphic Hecke newforms of level $q$, averaged over $q \asymp Q$.  To illustrate our methods, we prove a one level density result for this family with the support of the Fourier transform of the test function being extended to be inside $(-4, 4)$.  The main techniques developed in this paper will be useful in developing further results for this family, including estimates for high moments, information on the vertical distribution of zeros, as well as critical line theorems.
\end{abstract}

\maketitle

\section{Introduction} 

A major theme in analytic number theory is to study statistics over families of $L$-functions.  Here, a foundational heuristic is that the statistics of families of $L$-functions should match analogous statistics from classical compact groups of random matrices.  Katz and Sarnak proved that such heuristics hold in many examples of zeta and $L$-functions over function fields \cite{KaSa}, and suggest that analogous results should hold over number fields.  
In this paper, we are interested in studying the low lying zeros of $L$-functions near the central point.  Here, the Katz-Sarnak \cite{KaSa} philosophy predicts that the distribution of the low-lying zeros is governed by some underlying symmetry group.  The full predictions of these heuristics remain far out of reach; however, there are several examples of families of $L$-functions for which conjectures have been verified in certain restricted ranges.

One example of such a family is the family of $L$-functions associated with holomorphic Hecke eigencuspforms of fixed weight $k$ and square-free level $q$. For this family, assuming the Generalized Riemann Hypothesis (GRH), Iwaniec, Luo, and Sarnak~\cite{ILS} verified that the density predicted by Katz and Sarnak for the low-lying zeros holds true as $q\rightarrow \infty$ provided that the support of the Fourier transform of the test function is compact and contained in the interval $(-2,2)$.  The square-free condition on the level $q$ was removed in later work of Barrett et al. \cite{BBDDM}.

In this paper, we develop a novel method that allows us to prove a stronger result for an extension of this family of $L$-functions. More precisely, we study the orthogonal family of $L$-functions associated with holomorphic Hecke newforms of level $q$, averaged over {\it all } $q \asymp Q$.  Under GRH, we show that the density predicted by Katz and Sarnak holds true for the extended family when the support of the Fourier transform of the test function is contained in the interval $(-4,4)$. The technique we develop has applications to the study of other statistics of this family, such as the sixth and eighth moments, the number of zeros on the critical line, and the vertical distribution of zeros.

To state our results more precisely, let $S_k(q)$ be the space of cusp forms of fixed weight $k \ge 3$ for the group $\Gamma_0(q)$ with trivial nebentypus,  where as usual,
$$
\Gamma_0(q) := \left\{ \left( \left.{\begin{array}{cc}
   a & b \\
   c & d \\
  \end{array} } \right)  \ \right|  \  ad- bc = 1 , \ \  c \equiv 0 \mod q \right\}.
$$
Let $\mathcal H_k(q) \subset  S_k(q)$ be an orthogonal basis of the space of newforms consisting of Hecke cusp newforms, normalized so that the first Fourier coefficient is $1$. For each $f \in \mathcal H_k(q) $, we let $L(s, f)$ be the $L$-function associated to $f$, defined for Re$(s) > 1$ by
\es{ \label{def:Lsf}
L(s, f) = \sum_{n \geq 1} \frac {\lambda_f(n)}{n^{s}} &= \prod_p \pr{1 - \frac{\lambda_f(p)}{p^s} + \frac{\chi_0(p)}{p^{2s}}}^{-1} \\
&= \prod_p \pr{1 - \frac{\alpha_f(p)}{p^s} }^{-1} \pr{1 - \frac{\beta_f(p)}{p^s} }^{-1},}
where $\{\lambda_f(n)\}$ is the set of Hecke eigenvalues of $f$ and $\chi_0$ denotes the trivial character $\bmod q$. Since $f$ is a newform, $L(s, f)$ can be analytically continued to the entire complex plane and satisfies the functional equation
\es{ \label{eqn:fncL} \Lambda\pr{\tfrac 12 + s, f} = i^k \overline{\eta}_f\Lambda\pr{ \tfrac 12 - s, f} }
where the completed $L$-function $\Lambda(s, f)$ is defined by
\est{\Lambda\pr{ \tfrac 12 + s, f} = \pr{\frac q{4\pi^2}}^{\frac s2} \Gamma \pr{s + \frac k2} L\pr{ \tfrac 12 + s, f},}
and the sign of the functional equation is $i^k \eta_f = \pm 1$.  For convenience, we normalize our sum over $f$ to play well with spectral theory. Suppose for each $f \in \mathcal H_k(q)$, we have an associated number $\alpha_f$. Then we define the harmonic average of $\alpha_f$ over $\mathcal H_k(q)$ to be 
\begin{equation}\label{eqn: harmonicweightsummation}
\sumh_{f \in \mathcal H_k(q)} \alpha_f = \frac{\Gamma(k-1)}{(4\pi)^{k-1}}\sum_{f \in \mathcal H_k(q)} \frac{\alpha_f}{\|f\|^2},
\end{equation}
where  $\|f \|^2 = \int_{\Gamma_0(q) \backslash \mathbb H} |f(z)|^2 y^{k-2} \> dx \> dy. $ 

Now let $\Phi(x)$ be an even Schwartz class function, and let 
$$
\widehat \Phi(t) = \int_{-\infty}^{\infty} \Phi(x) e^{-2\pi i x t} \> dx
$$
be the usual Fourier transform.

The \textit{Density Conjecture} from the Katz-Sarnak philosophy ~\cite{KaSa} predicts that
\begin{equation}\label{eqn:katzsarnakprediction}
\lim_{q\rightarrow \infty} \frac{1}{\#\mathcal{H}_k(q) } \sum_{f\in \mathcal{H}_k(q)} \sum_{\gamma_f} \Phi \left( \frac{\gamma_f }{2 \pi} \log q\right) = \int_{-\infty}^{\infty} \Phi(x) \left( 1 +\frac{1}{2}\delta_0(x)\right)\,dx,
\end{equation}
where $\gamma_f$ runs through the imaginary parts of the nontrivial zeros of $L(s,f)$, each repeated according to multiplicity, and $\delta_0(x)$ is the Dirac delta distribution at $x=0$. Here $1 + \delta_0(x)/2$ is the kernel associated with orthogonal symmetry. Assuming GRH for certain $L$-functions, Iwaniec, Luo, and Sarnak~\cite{ILS} prove that \eqref{eqn:katzsarnakprediction} holds when $\widehat \Phi$ is compactly supported in $(-2,2)$ and the variable $q$ runs through the square-free integers.

In this paper, we study a larger family of orthogonal $GL(2)$ $L$-functions by including an average over $q$.  To this end, we fix a smooth function $\Psi$ compactly supported on $\mathbb{R}_{>0}$ with $\widehat{\Psi}(0) \neq 0$ and let
$$
\mathscr{OL}(Q) := \frac{1}{N(Q)}\sum_q \Psi\bfrac{q}{Q} \sumh_{f \in \mathcal H_{k}(q)} \sum_{\gamma_f} \Phi \left( \frac{\gamma_f }{2 \pi} \log q\right),
$$
where 
$$
N(Q) := \sum_q \Psi\bfrac{q}{Q} \sumh_{f \in \mathcal H_{k}(q)} 1.
$$
Note that by work of G. Martin \cite{GMar}, we have
\begin{equation}\label{eqn:asympN(q)}
N(Q) \asymp \sum_q \Psi\bfrac{q}{Q} \asymp  Q \widehat{\Psi}(0),
\end{equation}since $\widehat{\Psi}(0) \neq 0$.  Our main result is as follows.

\begin{thm}\label{thm:main} Assume GRH. Let $\Phi$ be an even Schwartz function with $\widehat \Phi$ compactly supported in $(-4, 4).$ Then with notation as before,
$$ \lim_{Q \rightarrow \infty} \mathscr{OL}(Q) = \int_{-\infty}^{\infty} \Phi(x) \left( 1 +\frac{1}{2}\delta_0(x)\right)\,dx,
$$
where $\delta_0(x)$ is the Dirac $\delta$ distribution at $x=0$.
\end{thm}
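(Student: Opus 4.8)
The plan is to run Weil's explicit formula over the family, reduce the theorem to the vanishing of two averaged prime sums, dispatch the prime‑square sum by GRH for symmetric‑square $L$‑functions, and treat the linear prime sum by combining the newform Petersson formula with the average over $q$. Since GRH puts every $\gamma_f$ on the real line, the inner sum over zeros is well defined; applying the explicit formula to $\Phi$ and using $\alpha_f(p)^2+\beta_f(p)^2=\lambda_f(p^2)-\chi_0(p)$ together with $\sum_{p\le x}p^{-1}\log p=\log x+O(1)$ gives, after averaging,
\[
\mathscr{OL}(Q)=\widehat\Phi(0)+\tfrac12\Phi(0)-2S_1(Q)-2S_2(Q)+o(1),
\]
where for $i\in\{1,2\}$
\[
S_i(Q):=\frac{1}{N(Q)}\sum_q\Psi\!\left(\tfrac qQ\right)\frac{1}{\log q}\sumh_{f\in\mathcal H_k(q)}\sum_p\frac{\lambda_f(p^i)\log p}{p^{i/2}}\,\widehat\Phi\!\left(\tfrac{i\log p}{\log q}\right),
\]
the prime powers $p^j$ with $j\ge 3$ and the ramified primes $p\mid q$ being absorbed into $o(1)$ via $|\lambda_f(p^j)|\le j+1$ and $\sum_p p^{-j/2}\log p=O(1)$, and the $\tfrac12\Phi(0)$ being precisely the (averaging‑free) contribution of $-\chi_0(p)$ in the $j=2$ term. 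Hence it suffices to prove $S_1(Q)\to0$ and $S_2(Q)\to0$.

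For $S_2(Q)$ I would argue pointwise in $f$, exactly as in \cite{ILS}: Fourier inversion applied to the smooth weight $\widehat\Phi(2\log p/\log q)$ rewrites $\sum_p\frac{\lambda_f(p^2)\log p}{p}\widehat\Phi(\tfrac{2\log p}{\log q})$ as a short average of $-\tfrac{L'}{L}\!\left(1+it,\sym^2 f\right)$ over $|t|\ll 1/\log q$, plus $O(\log\log q)$ from prime powers and the primes $p\mid q$; standard GRH estimates give $\big|\tfrac{L'}{L}(1+it,\sym^2 f)\big|\ll(\log\log q)^{O(1)}$ there, so this sum is $\ll(\log\log q)^{O(1)}$ uniformly, whence $S_2(Q)\ll(\log\log Q)^{O(1)}/\log Q\to0$. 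The average over $q$ plays no role here.

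The substance is $S_1(Q)$ --- the sum which confines the method of \cite{ILS} to $(-2,2)$, since the analogous Fourier‑inversion trick would land on $\tfrac{L'}{L}(1/2+it,f)$, which has poles at the very low‑lying zeros under study. Instead I would first remove the newform restriction, expressing $\sumh_{f\in\mathcal H_k(q)}\lambda_f(p)$ through the harmonic averages over the full Petersson basis $B_k(M)$, $M\mid q$ (the oldform sieve of \cite{ILS}), and then invoke the Petersson formula
\[
\sumh_{g\in B_k(M)}\lambda_g(p)=\delta_{p=1}+2\pi i^{-k}\sum_{M\mid c}\frac{S(p,1;c)}{c}\,J_{k-1}\!\left(\frac{4\pi\sqrt p}{c}\right),\qquad \delta_{p=1}=0.
\]
This turns $S_1(Q)$ into a weighted average, over $q\asymp Q$ and $M\mid q$, of sums of Kloosterman sums $S(p,1;c)$ with $M\mid c$, twisted by $\log p/\sqrt p$, the Bessel factor, and $\widehat\Phi(\log p/\log q)$ (note that for $i=1$ the support condition reads $p\ll q^4$, so $\sqrt p\ll Q^2$). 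The key device is to carry out the $q$‑average at this stage: summing over $q\asymp Q$ divisible by $M$ effectively lengthens the range of the Kloosterman modulus $c$ from the fixed‑level ceiling $O(Q)$ up to $Q^{2-\varepsilon}$, which is exactly what is needed to meet the Bessel truncation $c\ll\sqrt p\ll Q^2$ available throughout $\operatorname{supp}\widehat\Phi\subset(-4,4)$. I would then open the Kloosterman sums and estimate the inner exponential sum over primes $\sum_p\frac{\log p}{\sqrt p}e(px/c)J_{k-1}(4\pi\sqrt p/c)\widehat\Phi(\tfrac{\log p}{\log q})$ on GRH for Dirichlet $L$‑functions, the error per modulus being $\ll\sqrt c\,(\log Q)^{O(1)}$; summing this against the Bessel‑truncated range of $c$, the oldform parameters, and $q\asymp Q$ yields $\ll Q^{-\delta}$ on $(-4,4)$. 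The principal‑character part of the prime sum contributes a secondary main term, of size comparable to $\Phi(0)$ and built from the Ramanujan sum $\sum^{*}_{x\,(c)}e(\bar x/c)=\mu(c)$, the singular series $\sum_c\mu^2(c)/\phi(c)$, and the Bessel integral; a crucial (and delicate) point is to evaluate it precisely and check, using the Möbius structure of the sieve weights, that it is cancelled, so that the limit remains $\widehat\Phi(0)+\tfrac12\Phi(0)$.

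The main obstacle is exactly this analysis of $S_1(Q)$: arranging the quadruple average --- over the prime $p$, the Kloosterman modulus $c$, the oldform level $M\mid q$, and the level $q\asymp Q$ --- so that the $q$‑average genuinely supplies Kloosterman‑modulus sums of length $Q^{2-\varepsilon}$, isolating and cancelling the secondary main term that this creates, and keeping a positive power of $Q$ saved uniformly up to the endpoints of $(-4,4)$. The rest --- the explicit formula, the $j\ge3$ and $p\mid q$ contributions, and $S_2(Q)$ --- is comparatively routine.
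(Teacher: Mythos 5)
Your reduction via the explicit formula to $\widehat\Phi(0)+\tfrac12\Phi(0)-2S_1(Q)-2S_2(Q)+o(1)$ is correct and matches the paper (Lemma~\ref{lem:Explicit}, Lemma~\ref{lem:trimpsum}, \eqref{eqn:afterapplyingexplicitformula}), and so is the treatment of $S_2$ via GRH for $L(s,\sym^2 f)$, as in \eqref{eqn:lambdaAtp^2}. You also correctly identify that $S_1$ is the crux and that Petersson for newforms (Ng's formula, Lemma~\ref{lem:PeterssonNg}, rather than the square-free ILS oldform sieve since $q$ ranges over all integers) plus the Bessel truncation $cq\ll\sqrt p$ is the right starting point. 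However, your proposed method for $S_1$ has a genuine gap: you open the Kloosterman sums $S(p,1;cq)$ and estimate the inner $p$-sum on GRH for Dirichlet $L$-functions. But bounding each residue's $p$-sum by $(\log)^{O(1)}$ and then summing over the $\sim\varphi(cq)$ residues only reproduces Weil-strength savings, which is precisely what confines ILS to $(-2,2)$; the $q$-average is not actually exploited for cancellation in this scheme. Moreover the Bessel phase $e(\pm 2\sqrt p/(cq))$ is left unaddressed, and it is precisely the coupling of this phase with $e(px/(cq))$ that obstructs a clean GRH bound for the prime sum. Finally, the ``secondary main term of size $\Phi(0)$ built from the Ramanujan sum and singular series'' that you predict and hope to cancel does not arise in the paper's analysis: the principal-character contribution of the Eisenstein spectrum is shown in Lemma~\ref{prop:principal} to be an error term $\ll Q^\epsilon(1+|u|)^2(1+|v|)^2\bigl(P^{1/4+\epsilon}+\sqrt P/Q\bigr)$, small throughout $P\ll Q^{4-\delta}$, because the average over the spectral parameter $t$ kills the would-be main term; there is nothing to cancel.

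The paper's essential idea, which your proposal omits entirely, is to apply Kuznetsov's formula (Lemma~\ref{lem:kuznetsov}) to the sum over the level $q$ after Petersson. This is the ``complementary level trick'': the sum over $q\asymp Q$ of $S(e^2,p;cL_1rdn)$ weighted by the Bessel factor becomes a spectral sum over Maass forms, holomorphic forms, and Eisenstein series of level $cL_1rd\ll Q^{1-\delta/2}$, i.e.\ of much smaller conductor. The Bessel factor is absorbed into the transforms $h_+$ and $h_h$, which are controlled by Lemma~\ref{lem:boundforhu} and Lemma~\ref{lem:boundh+}. GRH is then applied to the Hecke $L$-functions of these lower-level forms (Lemma~\ref{lem:CLee3.5}, Lemma~\ref{lem:heckebasisfouriersumbdd}), and the Eisenstein contribution is made explicit via Kiral--Young (Lemma~\ref{lem:KiralYounglem}). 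This second application of the trace formula, not opening the Kloosterman sums, is what delivers the extra factor-of-$Q$ saving needed to push the support from $(-2,2)$ to $(-4,4)$.
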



The size of the support of $\widehat{\Phi}$ is doubled in our result compared to the result of Iwaniec, Luo and Sarnak \cite{ILS} because we have taken full advantage of the additional average over all levels $q$ around size $Q$.  This family of around $Q^2$ forms has elements with conductors of size around $Q$.  It is instructive to compare this with results on the similar family of Dirichlet $L$-functions attached to Dirichlet characters $\chi \bmod q$ with $q$ around size $Q$.

There, Conrey, Iwaniec, and Soundararajan~\cite{CIS} developed an asymptotic large sieve. Their work forms the foundation for later sixth and eighth moment calculations \cite{CIS2,CL,CLMR}, critical line theorems \cite{CIS3}, and other powerful results, and we expect the framework developed here to lead to analogous results for our family of cusp forms.

There are fundamental differences between these two families, and here we point out one that is especially relevant in the context of one level density. Simply put, when averaging over $m, n$ and modulus $q \asymp Q$, the asymptotic large sieve from~\cite{CIS} depends critically on a ``complementary divisor trick'' that allows us to switch the modulus to $ \frac{m\pm n}{Q}$.  In our family of cusp forms, we develop a complementary level trick, which allows us to switch to level $\frac{\sqrt{mn}}{Q}$.  The geometric mean $\sqrt{mn}$ is the same as the arithmetic mean $\frac{m+n}{2}$ if and only if $m=n$, and is far smaller when $m$ and $n$ are far apart.  This fact plays a critical role here, since in the context of one level density, we essentially have $m \asymp Q^4$ and $n=1$.

In terms of one level density results, Fiorilli and Miller~\cite{FM} studied the family of Dirichlet $L$-functions over all characters modulo $q$, where $q \asymp Q$ and wanted to understand
$$
\sum_{q \asymp Q} \sum_{\chi \bmod q } \sum_{\gamma_f} \Phi\left( \frac{\gamma_f}{2\pi} \log q\right).
$$
Previously, the work of Hughes and Rudnick \cite{HR} studided the analogous quantity over the smaller family of Dirichlet $L$-functions attached to characters modulo $q$ with fixed prime $q$, and verified the analogous density conjecture  with the support of $\widehat{\Phi}$ restricted within $[-2, 2]$.  The additional average over $q$ in the work of Fiorilli and Miller \cite{FM} does not lead directly to an improvement on the size of the support of $\widehat{\Phi}$.  Rather, Fiorilli and Miller \cite{FM} were able to extend the support to $(-4, 4)$ assuming both GRH and a very strong ``de-averaging hypothesis'' on the variance of primes in residue classes. 

Recently, Drappeau, Pratt and Radziwi\l\l \, \cite{DPR} considered the one level density for the similar large family of Dirichlet $L$-functions attached to all primitive characters modulo $q$ for all $q \asymp Q$ and showed that the support of $\widehat \Phi$ can be extended to be within $(-2 - 50/1093, 2 + 50/1093)$ unconditionally.  It remains a challenging problem to extend the support to something closer to $(-4, 4)$ without additional hypotheses.

Our main Theorem~\ref{thm:main} will follow from the explicit formula and the Proposition below. The proof of Theorem~\ref{thm:main} will be in Section~\ref{sec:proofofMainTheorem}.

{\prop\label{prop:main} 
Assume GRH. Let $\Phi$ be an even Schwartz function with $\widehat \Phi$ compactly supported in $(-4, 4).$ Moreover, let 
\es{\label{def:cfn}
c_f(n) = \left\{ \begin{array}{cc}
     \alpha_f(p)^\ell + \beta_f(p)^\ell  & \ \textrm{if} \  n = p^\ell \\
     0 & \ \textrm{otherwise}, 
\end{array}\right. }
where $\alpha_f$ and $\beta_f$ are as in \eqref{def:Lsf}. Then 
$$ \lim_{Q \rightarrow \infty }\frac{1}{N(Q)}  \sum_q \Psi\bfrac{q}{Q} \frac{1}{\log q} \sumh_{f \in \mathcal{H}_k(q)} \sum_n \frac{\Lambda(n)c_f(n)}{\sqrt{n}} \widehat \Phi\left( \frac{\log n}{\log q}\right) = -\frac{ \Phi(0)}{4}.  $$
 Here $\Lambda(n)$ is the usual von Mangoldt function, which is $\log p$ when $n = p^\ell$ and $0$ otherwise.
}

\subsection{Outline of the proof}

To prove Proposition~\ref{prop:main}, we use Petersson's formula to reduce the problem to morally bounding a sum like
\begin{align}\label{eqn:simplesum1}
\frac{1}{N(Q)}  \sum_q \Psi\bfrac{q}{Q}   \sum_p \frac{\log p }{\sqrt{p}} \widehat \Phi\left( \frac{\log p}{\log q}\right) \sum_{c\ge 1} S(p, 1; cq) J_{k-1}\bfrac{4 \pi \sqrt{p}}{cq}.  
\end{align}
Here, we have suppressed technical complications arising from the average $\sumh_{f \in \mathcal H_{k}(q)}$ being a sum over primitive forms, rather than an orthogonal basis of all forms.  This requires pruning, and we suppress the details here for simplicity.  The sum in \eqref{eqn:simplesum1} is easy to bound if $\widehat{\Phi}$ is supported on $(-2, 2)$ by an application of the additive Large Sieve.  

We want to understand \eqref{eqn:simplesum1} when $\widehat{\Phi}$ has support extended to $(-4, 4)$.  To do this, we need to extract non-trivial cancellation in the sum over Kloosterman sums.  For simplicity, we restrict our attention to the transition region of the Bessel function, so that $cq \asymp \sqrt{p}$. The support of $\widehat{\Phi}$ is compact and contained in $(-4, 4)$, so the sum over $p$ is restricted to $p\le Q^{4-\delta}$ for some $\delta > 0$ depending on $\Phi$.  Hence, we see that in this range
$$c \ll Q^{1-\delta/2}.
$$

This motivates us to write \eqref{eqn:simplesum1} as
\begin{align*}
\frac{1}{N(Q)}  \sum_p \frac{\log p }{\sqrt{p}} \sum_{c\ge 1}  \mathcal{S}(p, c)
\end{align*}
where
\begin{equation*}
    \mathcal{S}(p, c) = \sum_q  S(p, 1; cq) \Psi\bfrac{q}{Q}\widehat\Phi\left( \frac{\log p}{\log q}\right) J_{k-1}\bfrac{4 \pi \sqrt{p}}{cq}
\end{equation*}
can be transformed into a sum of forms of level $c$ via Kuznetsov's formula. In Proposition~\ref{prop:main}, we started with a sum over forms of level $q\asymp Q$. Thus, in applying Kuznetsov's formula, we have succeeded in lowering the conductor from $q \asymp Q$ to $c \ll Q^{1-\delta/2}$.

In most cases, we can use an appropriate choice of basis and GRH to bound the resulting sums over $p$.  To be more precise, we have morally that
$$ \sum_{p\asymp P} \frac{\log p }{\sqrt{p}} \lambda_g(p) \ll \log^2 Q
$$where $\lambda_g$ is some type of Hecke eigenvalue associated to a holomorphic form, a Maass form, or an Eisenstein series, and $P \asymp Q^{4-\delta}$.  For holomorphic forms and Maass forms, we reduce the problem to bounding these sums by choosing a special basis based on Atkin-Lehner theory, while for Eisenstein series, we use the relatively recent explicit calculations of Kiral and Young \cite{KY}.

An exception occurs in the case of an Eisenstein series, where one may morally replace $\lambda_g(p)$ with sums involving $\chi(p)$ for certain Dirichlet characters $\chi$.  When $\chi$ is non-principal, the above approach holds. However, when $\chi = \chi_0$ is principal, the sum over $p$ is genuinely large and is essentially $P^{1/2-it}\tilde{V}_1(1/2-it)$ for some rapidly decaying function $\tilde{V}_1$, where $t$ is the spectral parameter and $P$ can be as large as $Q^{4-\delta}$. We get around this issue by taking advantage of the average over the spectral parameter. This is in contrast to other situations, e.g. \cite{DFI}, where a truly large contribution arises from the Eisenstein series.

In \S\ref{sec:prelimresults} and \S \ref{sec:kutz}, we gather useful standard results for the rest of the paper.  We apply some initial steps in \S\ref{sec:mainsetup} and reduce the proof of Proposition~\ref{prop:main} and Theorem~\ref{thm:main} to Proposition~\ref{prop:boundSigma_1}.  In \S\ref{sec:applypetersson}, we apply Ng's~\cite{Ng} version of the Petersson's formula for primitive forms, and prune our sum by truncating the sum over less important parameters. This sets the stage for our application of Kutznetsov in \S\ref{sec:applykutz}, which reduces our task to bounding the contribution of the holomorphic forms and discrete spectrum in Proposition~\ref{prop:DisHol}, and the contribution of the continuous spectrum in Proposition~\ref{prop:Ctn}.  The proof of Proposition~\ref{prop:DisHol} is carried out in \S\ref{sec:dispart}.
The proof of Proposition~\ref{prop:Ctn} is carried out in \S\ref{sec:ctn}, where the contribution of the non-principal characters resembles the treatment of the Maass form contribution, while the principal character contribution requires different methods.


\section{Notation and Preliminary Results} \label{sec:prelimresults}

Throughout the paper, we follow the standard convention in analytic number theory of letting $\epsilon$ denote an arbitrarily small positive real
number whose value may change from line to line. We always use $p$ to denote a prime number. 

In this section, we collect some lemmas we will need in our proofs. 

\subsection{Approximate orthogonality}
We begin by stating the orthogonality relations for our family. These are the standard Petersson's formula (e.g. see \cite{Iwaniec}), and a version of Petersson's formula that is restricted to newforms and is due to Ng~\cite{Ng}.

Recall that $S_k(q)$ is the space of cusp forms of weight $k$ and level $q$. Let $B_k(q)$ be any orthogonal basis of $S_k(q)$. Define
\begin{equation*}
\Delta_q(m,n ) = \Delta_{k, q}(m, n) = \sumh_{f\in B_k(q)}\lambda_f(m)\lambda_f(n),
\end{equation*}
where the summation symbol $\sumh$ means we are summing with the same weights found in \eqref{eqn: harmonicweightsummation}. The usual Petersson's formula (e.g. see \cite{Iwaniec}) is the following.
\begin{lemma}\label{lem:usualPetersson}
If $m,n,q$ are positive integers, then
$$ \Delta_q(m, n) = \delta(m, n)+ 2\pi i^{-k} \sum_{c\geq 1} \frac{S(m, n;cq)}{cq} J_{k-1}\bfrac{4\pi \sqrt{mn}}{cq},
$$
where $\delta(m,n)=1$ if $m=n$ and is $0$ otherwise, $S(m,n;cq)$ is the usual Kloosterman sum, and $J_{k-1}$ is the Bessel function of the first kind.
\end{lemma}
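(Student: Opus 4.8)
\textbf{Proof proposal for Lemma~\ref{lem:usualPetersson}.}

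The plan is to derive the formula by computing the inner product of two Poincar\'e series against the orthogonal basis $B_k(q)$ in two different ways. First I would introduce the $n$-th Poincar\'e series $P_n$ of weight $k$ for $\Gamma_0(q)$, defined by the usual sum over $\Gamma_\infty \backslash \Gamma_0(q)$ of $j(\gamma,z)^{-k} e(n\gamma z)$. The key classical input is the formula for its Petersson inner product against an arbitrary cusp form $f \in S_k(q)$: unfolding the integral against the fundamental domain collapses the sum over cosets, and one finds that $\langle f, P_n \rangle$ is a constant multiple of the $n$-th Fourier coefficient of $f$, namely $\langle f, P_n \rangle = \frac{\Gamma(k-1)}{(4\pi n)^{k-1}} a_f(n)$, where $a_f(n)$ denotes the $n$-th Fourier coefficient. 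For our normalized Hecke newforms (and more generally for the basis elements, suitably interpreted) we have $a_f(n) = \lambda_f(n)$ after the first-coefficient normalization.

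Next I would expand $P_m$ in the orthogonal basis $B_k(q)$ and pair with $P_n$. On one hand, $\langle P_m, P_n \rangle = \sum_{f \in B_k(q)} \frac{\langle P_m, f\rangle \langle f, P_n\rangle}{\|f\|^2}$, and substituting the inner-product formula above turns this into $\frac{\Gamma(k-1)^2}{(4\pi)^{2(k-1)}(mn)^{k-1}}\sum_{f} \frac{\overline{\lambda_f(m)}\lambda_f(n)}{\|f\|^2}$; since the $\lambda_f$ are real this is, up to the explicit constant, exactly $(mn)^{1-k}\Delta_q(m,n)/(mn)^{(k-1)/2}$ matching the harmonic weight in \eqref{eqn: harmonicweightsummation} after one accounts for the Hecke normalization $\lambda_f(n) = a_f(n)$. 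On the other hand, I would compute $\langle P_m, P_n \rangle$ directly by unfolding $P_m$ against the Fourier expansion of $P_n$: this requires the Fourier expansion of the Poincar\'e series, whose coefficients are the classical Kloosterman-sum-times-Bessel-function expressions. Concretely, the $n$-th Fourier coefficient of $P_m$ has the shape $\delta(m,n) + 2\pi i^{-k}\sum_{c \geq 1}\frac{S(m,n;cq)}{cq}J_{k-1}\!\left(\frac{4\pi\sqrt{mn}}{cq}\right)$ times the same normalizing factor $(n/m)^{(k-1)/2}$ appearing above — the Bruhat decomposition of $\Gamma_0(q)$ forces the lower-left entry to be a multiple of $q$, which is why the Kloosterman modulus is $cq$ rather than $c$.

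Equating the two evaluations and cancelling the common normalizing constants yields precisely
$$
\Delta_q(m,n) = \delta(m,n) + 2\pi i^{-k}\sum_{c \geq 1}\frac{S(m,n;cq)}{cq}J_{k-1}\!\left(\frac{4\pi\sqrt{mn}}{cq}\right).
$$
The main obstacle, and the place where care is genuinely needed, is the interchange of summation and integration in the unfolding step: one must justify the absolute convergence of the Poincar\'e series (which holds for $k \geq 3$, exactly the weight hypothesis we are given, since then $\sum_\gamma |j(\gamma,z)|^{-k}$ converges) and the term-by-term unfolding against the Fourier expansion. Since this is a standard result I would simply cite \cite{Iwaniec} for the detailed verification of convergence and the explicit computation of the Poincar\'e series Fourier coefficients, noting only that the hypothesis $k \geq 3$ is what makes the argument go through without the regularization needed in small weight. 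No assumption beyond $m,n,q$ being positive integers is used, since $B_k(q)$ is allowed to be any orthogonal basis of the full space $S_k(q)$.
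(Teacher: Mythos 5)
Your proposal is correct and is exactly the classical Poincar\'e-series argument (unfolding $\langle P_m,P_n\rangle$ two ways) that the paper itself relies on by citing \cite{Iwaniec} without proof. The only blemish is the garbled normalization bookkeeping in your middle paragraph (the factor relating $a_f(n)$, $\lambda_f(n)$, and the harmonic weight $\Gamma(k-1)/(4\pi)^{k-1}\|f\|^2$ should be tracked as $a_f(n)=\lambda_f(n)\,n^{(k-1)/2}$ for the normalized basis), but this does not affect the validity of the argument.
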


Lemma~\ref{lem:usualPetersson}, the Weil bound for Kloosterman sums, and standard facts about the Bessel function imply the following Lemma (see Corollary 2.2 in \cite{ILS}).
\begin{lemma}\label{lem:petertruncate}
If $m,n,q$ are positive integers, then
\begin{equation*}
\Delta_q(m, n) = \delta(m, n) + O\left(\frac{\tau(q) (m, n, q)(mn)^{\epsilon}}{q ((m, q)+(n, q))^{1/2}} \bfrac{mn}{\sqrt{mn} + q}^{1/2} \right),
\end{equation*}
where $\tau(q)$ is the divisor function and $\delta(m,n)=1$ if $m=n$ and is $0$ otherwise.
\end{lemma}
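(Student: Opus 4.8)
The plan is to start directly from Lemma~\ref{lem:usualPetersson} and estimate the infinite sum over $c$ term by term, bounding each Kloosterman sum by the Weil bound and using the standard bounds $J_{k-1}(x) \ll x^{k-1}$ for small $x$ together with $J_{k-1}(x) \ll x^{-1/2}$ for large $x$ (both uniformly). First I would recall that $|S(m,n;cq)| \le \tau(cq)\,(m,n,cq)^{1/2}\,(cq)^{1/2}$, and that $\tau(cq) \le \tau(c)\tau(q)$. Writing $r = cq$, the task is to bound
\[
\sum_{c \ge 1} \frac{|S(m,n;cq)|}{cq}\,\left| J_{k-1}\!\bfrac{4\pi\sqrt{mn}}{cq} \right| \ll \tau(q)(mn)^\epsilon \sum_{c\ge 1} \frac{(m,n,cq)^{1/2}}{(cq)^{1/2}} \min\!\left\{ \bfrac{\sqrt{mn}}{cq}^{k-1},\ \bfrac{\sqrt{mn}}{cq}^{-1/2} \right\},
\]
where the factor $\tau(c)$ and any further divisor factors get absorbed into $(mn)^\epsilon$ once we note $c q \ll \sqrt{mn}$ in the relevant range (for $cq \gg \sqrt{mn}$ the rapid decay $J_{k-1}(x)\ll x^{k-1}$ with $k\ge 3$ makes the tail negligible, contributing $O((mn)^\epsilon/q)$ or better).

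Next I would split the sum over $c$ at the transition point $c \asymp \sqrt{mn}/q$. For $cq \le \sqrt{mn}$ we use $J_{k-1}(x) \ll x^{k-1} \le x^2$ (since $k \ge 3$), so the summand is $\ll (m,n,cq)^{1/2} (cq)^{-1/2} (\sqrt{mn}/(cq))^2$; after crudely bounding $(m,n,cq) \le (m,n,q)\cdot c \cdot (\text{something})$—more carefully, $(m,n,cq) \mid (m,n,q)\cdot c$ is not quite right, so I would instead bound $(m,n,cq)^{1/2} \le (m,n,q)^{1/2} c^{1/2} \tau(c)^{O(1)}$ type estimate absorbed into $(mn)^\epsilon$, or simply use $(m,n,cq)\le (mn)^\epsilon \cdot(\text{bounded power})$—the $c$-sum converges and is dominated by its first term $c=1$. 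This gives a contribution $\ll \tau(q)(m,n,q)(mn)^\epsilon q^{-1} (mn/q^2) \cdot q^{1/2}/(mn)^{1/2}$-type expression; organizing the powers of $mn$ and $q$ carefully yields the stated bound $\tau(q)(m,n,q)(mn)^\epsilon q^{-1}((m,q)+(n,q))^{-1/2}(mn/(\sqrt{mn}+q))^{1/2}$ in the regime $\sqrt{mn}\le q$. For $cq > \sqrt{mn}$ I would use the decay bound $J_{k-1}(x)\ll x^{-1/2}$, giving summand $\ll (m,n,cq)^{1/2}(cq)^{-1/2}(cq/\sqrt{mn})^{1/2} = (m,n,cq)^{1/2}(mn)^{-1/4}$, which does not obviously converge, so here I must instead use $J_{k-1}(x) \ll x^{k-1}$ again whenever $x$ is moderately large is false—rather, the correct move is to keep $J_{k-1}(x)\ll \min(x^{k-1},x^{-1/2})$ and note the $x^{-1/2}$ regime is only $cq \asymp \sqrt{mn}$ (a bounded number of dyadic blocks up to $cq \ll \sqrt{mn}$), while for $cq \gg \sqrt{mn}$ we revert to the rapidly-decaying $x^{k-1}$ bound with $k-1\ge 2$ so the $c$-sum converges. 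The dominant contribution is the single transition block $cq\asymp\sqrt{mn}$.

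The factor $((m,q)+(n,q))^{-1/2}$ and the precise power $(mn)^\epsilon$ come from a slightly more careful treatment of $(m,n,cq)$: writing $(m,n,cq)$ and noting it divides $cq$, one separates the part of $(m,n,cq)$ coming from $(m,q)$ and $(n,q)$; I would follow the computation in Corollary 2.2 of \cite{ILS} essentially verbatim at this point, since the $q$-aspect bookkeeping is identical once the $c$-sum has been collapsed to its transition term. \textbf{The main obstacle} is exactly this bookkeeping: making sure the $\gcd$ factor $(m,n,cq)$ is handled so that the power of $q$ in the denominator is the full $q$ (not $q^{1/2}$) and the $((m,q)+(n,q))^{1/2}$ improvement is retained, while simultaneously checking that summing over $c\ge 1$ (rather than the fixed level of \cite{ILS}) only costs $(mn)^\epsilon$ and does not degrade these savings. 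Everything else—the Weil bound, the Bessel asymptotics, the dyadic decomposition—is routine.
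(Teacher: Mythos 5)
The paper does not give an independent proof of this lemma; it simply records that it follows from Lemma~\ref{lem:usualPetersson}, the Weil bound, and standard Bessel estimates, and cites Corollary~2.2 of \cite{ILS}. Your proposal correctly identifies exactly this route. However, as written it contains a genuine confusion that would derail the estimate, and the step that produces the distinctive $((m,q)+(n,q))^{-1/2}$ factor is not actually carried out.

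The confusion: you split at $cq\asymp\sqrt{mn}$ and then say ``for $cq\le\sqrt{mn}$ we use $J_{k-1}(x)\ll x^{k-1}\le x^2$.'' But with $x=4\pi\sqrt{mn}/(cq)$, the range $cq\le\sqrt{mn}$ is precisely where $x\ge 4\pi$ is \emph{large}, so the useful Bessel bound there is $x^{-1/2}$, not $x^{k-1}$ (and the inequality $x^{k-1}\le x^2$ is in the wrong direction for $x>1$ and $k>3$). Using $x^{k-1}$ on this range gives a bound growing like $mn/q^{5/2}$, which is far larger than the stated estimate. Conversely, the range $cq>\sqrt{mn}$ is where $x$ is small and $x^{k-1}$ is the right bound, and this is what makes the tail of the $c$-sum converge (with $k\ge 3$). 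Your later self-correction does land on the right picture, but the earlier paragraph as written is simply wrong and the two regimes need to be swapped throughout. After doing so, the $x^{-1/2}$ range contributes roughly $\sum_{c\ll\sqrt{mn}/q}\tau(cq)(m,n,cq)^{1/2}(mn)^{-1/4}$, and one needs the averaged estimate $\sum_{c\le C}(m,n,c)^{1/2}\ll C(mn)^\epsilon$ to control the $c$-sum; this gives roughly $\tau(q)(m,n,q)^{1/2}(mn)^{1/4+\epsilon}/q$ when $q\le\sqrt{mn}$.

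The deferred part: the stated bound has $(m,n,q)\cdot((m,q)+(n,q))^{-1/2}$, which is \emph{stronger} than the $(m,n,q)^{1/2}$ that the straightforward Weil-plus-Bessel computation above yields, since $(m,n,q)\le\min((m,q),(n,q))$ implies $(m,n,q)/((m,q)+(n,q))^{1/2}\le(m,n,q)^{1/2}$. Extracting that extra saving requires the more careful analysis of the gcd structure of $S(m,n;cq)$ as in \cite{ILS}. You correctly flag this as ``the main obstacle'' and say you would follow ILS verbatim there, which is fair, and is exactly what the paper does by citation. So, in summary: same route as the paper's source, but the Bessel-regime assignment needs to be fixed, and the gcd refinement is deferred rather than proved.
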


For our purposes, we need to isolate the newforms of level $q$.  To be precise, recall that $\mathcal H_k(q)$ is the set of newforms of weight $k$ and level $q$ which are also Hecke eigenforms. We need a formula for
\begin{equation*}
\Delta_q^*(m, n) := \sumh_{f\in \mathcal H_k(q)} \lambda_f(m)\lambda_f(n).
\end{equation*}
A formula is known for squarefree level $q$ due to Iwaniec, Luo and Sarnak~\cite{ILS}, and for $q$ a prime power due to Rouymi~\cite{rouymi}. These formulas have been generalized to all levels $q$ by Ng \cite{Ng} (see also the works of Barret et al.\cite{BBDDM}, and Petrow \cite{Pe}). Ng's Theorem 3.3.1 contains some minor typos, but the corrected version is as follows.
\begin{lemma}\label{lem:PeterssonNg}
Suppose that $m,n,q$ are positive integers such that $(mn,q)=1$, and let $q = q_1q_2$, where $q_1$ is the largest factor of $q$ satisfying $p|q_1 \Leftrightarrow p^2|q$. Then
\begin{align*}
\Delta_q^*(m,n) = \sum_{\substack{q=L_1L_2d \\ L_1|q_1 \\ L_2|q_2}} \frac{\mu(L_1L_2)}{L_1L_2}  \prod_{\substack{p|L_1 \\ p^2 \nmid d}}  \left( 1-\frac{1}{p^2} \right)^{-1} \sum_{e|L_2^{\infty} }\frac{\Delta_d(m,ne^2)}{e}.
\end{align*}
\end{lemma}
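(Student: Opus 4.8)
The plan is to re-derive the stated (corrected) identity by the standard route: orthogonalise the oldform spaces and Möbius invert over the divisors of $q$, exactly as in Iwaniec--Luo--Sarnak for squarefree $q$ \cite{ILS} and Rouymi for prime powers \cite{rouymi}. First I would invoke the Atkin--Lehner decomposition
\[
S_k(q) = \bigoplus_{LM = q}\ \bigoplus_{g \in \mathcal H_k(M)} V_g(L), \qquad V_g(L) := \operatorname{span}\{\, g(\ell z) : \ell \mid L \,\},
\]
an orthogonal direct sum for the Petersson inner product. Concatenating orthogonal bases of the $V_g(L)$ produces an orthogonal basis of $S_k(q)$, so that $\Delta_q(m,n) = \sum_{LM=q}\sum_{g\in\mathcal H_k(M)}\Delta_q^{g,L}(m,n)$, where $\Delta_q^{g,L}$ is the harmonic sum over a basis of $V_g(L)$. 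Since $(mn,q)=1$ and every $\ell$ appearing divides $q$, any element of $V_g(L)$ normalised to leading Fourier coefficient $1$ has the same $m$-th and $n$-th Hecke eigenvalues as $g$; hence $\Delta_q^{g,L}(m,n) = \lambda_g(m)\lambda_g(n)\,\omega_q(g,L)$, where $\omega_q(g,L)$ is, up to the factor $\|g\|^{-2}$ and the constants in \eqref{eqn: harmonicweightsummation}, the trace of the inverse Gram matrix of $\{g(\ell z)\}_{\ell\mid L}$.

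The heart of the argument is to compute $\omega_q(g,L)$ primewise. The entries $\langle g(\ell_1 z),g(\ell_2 z)\rangle/\|g\|^2$ form a multiplicative function of $(\ell_1,\ell_2)$ whose local factor at a prime $p\mid L$ depends only on $\lambda_g(p)$ and on whether $p\mid M$ (using $\lambda_g(p)=0$ in the latter case; Deligne's bound keeps the matrix nonsingular in the former) --- this is \cite[Lemma 2.5]{ILS} together with its prime-power extension. Inverting block by block gives $\omega_q(g,L)$ explicitly as a multiplicative function of $L$. Then, applying the Hecke relations at the primes $p\mid q$ (legitimate since $(n,q)=1$), the product $\lambda_g(m)\lambda_g(n)\,\omega_q(g,L)$ unfolds into a finite combination of terms $\lambda_g(m)\lambda_g(ne^2)/e$ with $e$ supported on the primes of $L$, and these --- after summing over $g\in\mathcal H_k(M)$ and over $M\mid d$ --- reassemble into the lower-level full Petersson sums $\Delta_d(m,ne^2)$ of Lemma~\ref{lem:usualPetersson}.

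Isolating the top term $M=q$ (where $L=1$, $V_g(1)$ is spanned by $g$ alone, and the contribution is exactly $\Delta_q^*$) turns the lemma into a purely local identity at each prime power $p^a\|q$: the claimed combination of $\mu(L_1L_2)/(L_1L_2)$, the Euler factor $(1-p^{-2})^{-1}$, and the geometric tail $\sum_{e\mid L_2^\infty}e^{-1}$ must reproduce the local weight computed above. The split $q=q_1q_2$ is precisely an artifact of this step: for $p$ with $p^2\mid q$ (so $p\mid q_1$) the local inverse Gram matrix has size $\ge 2$ and contributes the factor $(1-p^{-2})^{-1}$, with the side condition $p^2\nmid d$ recording whether that prime has been fully descended --- this is the locus of Ng's typo --- whereas for $p\,\|\,q$ (so $p\mid q_2$) the oldform multiplicity is $2$ and re-expressing the $2\times 2$ inverse through lower levels is what forces the tail $\sum_{e\mid L_2^\infty}\Delta_d(m,ne^2)/e$.

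The main obstacle is exactly this last local bookkeeping: inverting the Gram matrix of $\{g(p^j z)\}_{0\le j\le v_p(L)}$ for an arbitrary prime power and re-expressing its trace cleanly through the lower-level $\Delta_d$'s, keeping careful track of the three regimes $p\nmid q$, $p\,\|\,q$, and $p^2\mid q$. Everything else is formal Möbius inversion over the divisor lattice of $q$, and since all quantities involved are multiplicative the three local cases can be verified independently and then multiplied together.
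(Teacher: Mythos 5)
The paper does not actually prove Lemma~\ref{lem:PeterssonNg}: it is cited directly from Ng's thesis (Theorem~3.3.1), with the remark that Ng's statement contains typos which the authors have corrected. So there is no ``paper's own proof'' to compare against line by line, only a citation to Ng (and, for alternatives, Barrett et al.\ and Petrow).

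That said, your proposed route is the right one and is essentially the route Ng and the other references follow: decompose $S_k(q)$ via Atkin--Lehner into the spaces $V_g(L)$, use $(mn,q)=1$ to pull $\lambda_g(m)\lambda_g(n)$ out of each oldform block, reduce to a multiplicative weight $\omega_q(g,L)$ coming from the inverse Gram matrix of $\{g(\ell z)\}_{\ell\mid L}$, unfold that weight via Hecke relations into $\Delta_d(m,ne^2)$ terms, and Möbius-invert to isolate $\Delta_q^*$. The local split into the three regimes $p\nmid q$, $p\,\|\,q$, $p^2\mid q$ is also the correct organizing principle, and you correctly identify that $p\,\|\,q$ is the source of the geometric tail $\sum_{e\mid L_2^\infty}e^{-1}$ while $p^2\mid q$ is the source of the $(1-p^{-2})^{-1}$ Euler factor with its $p^2\nmid d$ side condition.

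However, what you have written is a plan, not a proof, and you say so yourself: ``the main obstacle is exactly this last local bookkeeping.'' That local bookkeeping is not a routine verification that can be waved through --- it is the entire content of the lemma. The precise coefficient $\mu(L_1L_2)/(L_1L_2)$, the exact Euler factor $(1-p^{-2})^{-1}$ gated on $p^2\nmid d$ (note this gate forces $p^2\,\|\,q$, not merely $p^2\mid q$, which is a subtlety your description glosses over), and the exact form of the $e$-sum all fall out of the primewise Gram-matrix inversion in a way that must be tracked carefully; indeed, getting these slightly wrong is exactly what produced the typos in Ng's original statement that the authors had to correct. In addition, two steps you mention in passing need more care than your sketch gives them: (i) the identification of $\omega_q(g,L)$ with the trace of the inverse Gram matrix requires fixing a consistent normalization of Fourier coefficients across the oldform basis, compatible with the harmonic weight in \eqref{eqn: harmonicweightsummation}; and (ii) your invocation of ``Deligne's bound keeps the matrix nonsingular in the former'' is inverted --- when $p\mid M$ one has $\lambda_g(p)^2\in\{0,p^{-1}\}$ and nonsingularity is easy, whereas the case needing Deligne's bound $|\lambda_g(p)|<2$ is $p\nmid M$. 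So: right strategy, correctly sourced, but the proof as submitted has an acknowledged and essential gap at precisely the point where the statement's exact shape is determined.
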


\subsubsection*{Remark}
Due to the factor $\mu(L_1L_2),$ we may assume that $L_1$ and $L_2$ are square-free, and $(L_1, L_2) = 1.$ Furthermore, the condition that $L_1|q_1$ and $L_2|q_2$ is equivalent to the condition that $L_1|d$ and $(L_2,d)=1$. To see this, suppose first that $L_1|q_1$ and $L_2|q_2$.  If $p|L_1$, then $p^2|q$. So $p$ must divide $d$ since $q=L_1L_2d$ and $p \nmid L_2$. This concludes that $L_1 | d$ since $L_1$ is square-free.  If $p|d$ and $p|L_2$, then $p^2 | q$, and so $p | q_1$. But $(q_1, q_2) = 1$, so $p = 1.$ Thus we must have $(L_2,d)=1$. On the other hand, suppose that $L_1|d$ and $(L_2,d)=1$. If $p|L_1$, then $p|d$, and so $p^2| q$. Thus $p|q_1$. This implies that $L_1|q_1$. If $p|L_2$ then $p\nmid L_1d$ since $(L_2,dL_1)=1$. Hence the exponent of $p$ in the prime factorization of $q=L_1L_2d$ is exactly $1$, so $p|q_2$ by definition of $q_2$. Thus $L_2|q_2$.

The next lemma collects some well known properties and formulas for the $J$-Bessel function.
{\lem\label{jbessel} Let $J_{k-1}$ be the $J$-Bessel function of order $k-1$. We have
\est{ J_{k-1}(2\pi x) =\frac{1}{2\pi \sqrt{x}}\left(W_k(2\pi x) \e{x-\frac{k}{4}+\frac{1}{8}} + \overline{W}_k(2\pi x) \e{-x+\frac{k}{4}-\frac{1}{8} }\right),} where $W_k^{(j)}(x)\ll_{j,k} x^{-j}.$ Moreover, 
\begin{equation*}  J_{k-1}(2x) =\sum _{\ell = 0}^{\infty} (-1)^{\ell} \frac{x^{2\ell+k-1}}{\ell! (\ell+k-1)!}
\end{equation*}
and $$J_{k-1}(x)\ll \textup{min} \{x^{-1/2}, x^{k-1}\}.$$
}

The proof of the first three claims of Lemma~\ref{jbessel} can be found in \cite[p. 206]{Watt}, and the statement of the last claim is modified from Equation 16 of Table 17.43 in \cite{GR}.  
\vskip 0.1in

\subsection{An explicit formula and some consequences of GRH}

The first step in our proof of Theorem~\ref{thm:main} is to use the following explicit formula, which relates zeros of $L(s,f)$ with a sum over its prime power coefficients.

\begin{lem}\label{lem:Explicit} Let $\Phi$ be an even Schwartz function whose Fourier transform has compact support. We have
\begin{align*}
    \sum_{\gamma_f} \Phi \left(\frac{\gamma_f}{2\pi} \log q\right)  &= -\frac{1}{\log q}\sum_{n=1}^{\infty}\frac{\Lambda(n)[c_f(n) + c_{\bar f}(n)]}{\sqrt{n}} \widehat\Phi\left(\frac{\log n}{\log q}\right)+ \int_{-\infty}^{\infty} \Phi(x) \> dx +O_k\left(\frac{1}{\log q} \right),
\end{align*}
where $c_f(n)$ is defined in \eqref{def:cfn}.
\end{lem}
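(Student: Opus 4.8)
The plan is to derive the identity from the Hadamard factorization of the completed $L$-function together with the functional equation \eqref{eqn:fncL}; this is the classical Weil-type explicit formula specialized to $L(s,f)$, and the only real work is bookkeeping the source of the main term $\int\Phi$ and of the error $O_k(1/\log q)$. First I would set up the test function: since $\Phi$ is even Schwartz with $\widehat\Phi$ of compact support, by Paley--Wiener it extends to an entire function with rapid decay on horizontal strips, so $F(s):=\Phi\!\left(\tfrac{\log q}{2\pi i}\bigl(s-\tfrac12\bigr)\right)$ is entire, satisfies $F(1-s)=F(s)$ by evenness of $\Phi$, decays like $(1+|\tIm s|)^{-A}$ for every $A$ in any vertical strip, and has $F\!\left(\tfrac12+i\gamma_f\right)=\Phi\!\left(\tfrac{\gamma_f}{2\pi}\log q\right)$. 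Recalling that $\Lambda(s,f)$ is entire of order $1$ whose zeros are exactly the nontrivial zeros $\rho$ of $L(s,f)$, I apply the argument principle to $\frac{\Lambda'}{\Lambda}(s,f)F(s)$ on the rectangle with vertical sides $\tRe s=1+\epsilon$ and $\tRe s=-\epsilon$; letting the height tend to infinity (the horizontal segments vanish using the decay of $F$ and the standard bound $\frac{\Lambda'}{\Lambda}(\sigma\pm iT,f)\ll\log(q(2+|T|))$ off an exceptional set of heights) gives
$$\sum_{\gamma_f}\Phi\!\left(\tfrac{\gamma_f}{2\pi}\log q\right)=\frac{1}{2\pi i}\int_{(1+\epsilon)}\frac{\Lambda'}{\Lambda}(s,f)F(s)\,ds-\frac{1}{2\pi i}\int_{(-\epsilon)}\frac{\Lambda'}{\Lambda}(s,f)F(s)\,ds.$$

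Next I would evaluate the two integrals. On $\tRe s=-\epsilon$ I use the functional equation in the form $\frac{\Lambda'}{\Lambda}(s,f)=-\frac{\Lambda'}{\Lambda}(1-s,f)$ and substitute $s\mapsto 1-s$; since $F(1-s)=F(s)$, the second integral becomes exactly minus the first, so the left-hand side equals $2\cdot\frac{1}{2\pi i}\int_{(1+\epsilon)}\frac{\Lambda'}{\Lambda}(s,f)F(s)\,ds$. On $\tRe s=1+\epsilon$ I write $\frac{\Lambda'}{\Lambda}(s,f)=\tfrac12\log\tfrac{q}{4\pi^2}+\frac{\Gamma'}{\Gamma}\!\left(s-\tfrac12+\tfrac k2\right)+\frac{L'}{L}(s,f)$ and treat the three pieces by shifting each to the line $\tRe s=\tfrac12$. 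The first piece contributes $\tfrac12\log\tfrac{q}{4\pi^2}\cdot\frac{1}{\log q}\int\Phi=\tfrac12\int\Phi(x)\,dx+O(1/\log q)$; the $\Gamma$-piece, using $\frac{\Gamma'}{\Gamma}\!\left(\tfrac k2+it\right)\ll_k\log(2+|t|)$ and the concentration of $F(\tfrac12+it)$ near $|t|\ll 1/\log q$, contributes $O_k(1/\log q)$ (crucially $k$ is fixed); and for the third piece I expand $\frac{L'}{L}(s,f)=-\sum_n\Lambda(n)c_f(n)n^{-s}$, shift each term, and compute $\frac{1}{2\pi i}\int_{(1/2)}F(s)n^{-s}\,ds=\frac{1}{\sqrt n\,\log q}\widehat\Phi\!\left(\frac{\log n}{\log q}\right)$ via the substitution $x=\tIm s\cdot\tfrac{\log q}{2\pi}$ and Fourier inversion, which produces $-\frac{1}{\log q}\sum_n\frac{\Lambda(n)c_f(n)}{\sqrt n}\widehat\Phi\!\left(\frac{\log n}{\log q}\right)$. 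Doubling and using $2c_f(n)=c_f(n)+c_{\bar f}(n)$ (the Hecke eigenvalues here are real) gives the stated formula.

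The only genuinely delicate points are routine bookkeeping: justifying the vanishing of the horizontal contour segments (handled by the convexity bound for $\Lambda'/\Lambda$ or by averaging over the height), and checking that the archimedean factor contributes precisely $\tfrac12\int\Phi$ to the main term and nothing else beyond $O_k(1/\log q)$ — this is where fixing the weight $k$ matters, since it prevents the $\Gamma$-factor from interacting with the $\log q$ normalization. No cancellation or arithmetic input is needed.
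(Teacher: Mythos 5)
Your proposal is correct and follows essentially the same route as the paper: a contour-shift applied to the logarithmic derivative of the (completed) $L$-function, use of the functional equation to relate the two vertical lines, and Mellin/Fourier inversion term-by-term on the Dirichlet series. The only cosmetic difference is that you work with $\Lambda'/\Lambda$ and exploit the symmetry $F(1-s)=F(s)$ to ``double'' a single-line integral, whereas the paper works directly with $L'/L$, applies the log-differentiated functional equation (hence a $\bar f$ appears on the reflected line, giving the $c_f+c_{\bar f}$ form without invoking reality of the eigenvalues), and folds the archimedean factors into a single $G(s)$ on the central line -- both yield the same archimedean main term $\int\Phi$ and error $O_k(1/\log q)$.
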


\begin{proof} Since $\widehat{\Phi}$ has compact support, we have that $\Phi$ is an entire function. Consider the integral
$$
\frac{1}{2\pi i}\int_{(1)}\frac{L'}{L}\left(s + \frac 12,f\right) \Phi\left(-is \frac{\log q}{2\pi}\right)\, ds.
$$
We move the line of integration to the left, passing poles at the zeros of $L(s, f)$, and we deduce that
\es{\label{eq:logintegral}
    \frac{1}{2\pi i}\int_{(1)}\frac{L'}{L}\left(s + \frac 12,f\right)\Phi\left(-is \frac{\log q}{2\pi}\right)\,ds = &\frac{1}{2\pi i }\int_{(-1)}\frac{L'}{L}\left(s + \frac 12,f\right)\Phi\left(-is \frac{\log q}{2\pi}\right)\,ds \\
     &+ \sum_{\gamma_f} \Phi \left(\frac{\gamma_f}{2\pi} \log q\right).
}
To estimate the integral along $\Rep (s) = -1$, we take the logarithmic derivative of both sides of the functional equation \eqref{eqn:fncL} and write
\begin{align*}
    \frac{1}{2}\log\left( \frac{q}{4\pi^2}\right)  +\frac{\Gamma'}{\Gamma}\left(s+\frac{k}{2}\right)&+\frac{L'}{L}\left(s + \frac 12,f\right)\\
    &= -\frac{1}{2} \log\left( \frac{q}{4\pi^2}\right) - \frac{\Gamma'}{\Gamma}\left(-s+\frac{k}{2}\right) - \frac{L'}{L}\left(\frac 12 - s,\bar f\right). 
\end{align*}
Hence, if we denote $G(s) = \frac{\Gamma'}{\Gamma}\left(s+\frac{k}{2}\right) + \frac{\Gamma'}{\Gamma}\left(-s+\frac{k}{2}\right)$, then
\begin{align*}
  & \frac{1}{2\pi i }\int_{(-1)}\frac{L'}{L}\left(s + \frac 12,f\right)\Phi\left(-is \frac{\log q}{2\pi}\right)\,ds\\
    &= -\frac{1}{2\pi i }\int_{(0)} \left[\log\left( \frac{q}{4\pi^2}\right)  + G(s) \right]\Phi\left(-is \frac{\log q}{2\pi}\right)\,ds -  \frac{1}{2\pi i }\int_{(1)}\frac{L'}{L}\left(s + \frac 12,\bar f\right) \Phi\left(-is \frac{\log q}{2\pi}\right) \> ds \\
    &= - \int_{-\infty}^{\infty} \Phi(x) \> dx + O_k \left( \frac{1}{\log q}\right) -  \frac{1}{2\pi i }\int_{(1)}\frac{L'}{L}\left(s + \frac 12,\bar f\right) \Phi\left(-is \frac{\log q}{2\pi}\right)\> ds.
\end{align*}
Finally, we use the series representation for $ \frac{L'}{L}(s,f)$ for $\Rep(s) > 1$ to finish the proof.  To be precise, for $\Rep(s) > 1$, 
$$-\frac{L'}{L}(s,f)=\sum_{n=1}^{\infty}\frac{\Lambda(n)c_f(n)}{n^s}. $$
Therefore
\begin{align*}
    \frac{1}{2\pi i}\int_{(1)}\frac{L'}{L}\left(s + \frac 12, f\right)\widehat \Phi\left(-is \frac{\log q}{2\pi}\right)\,ds
    & = -\sum_{n=1}^\infty \frac{\Lambda(n) c_f(n)}{\sqrt n} \frac{1}{2\pi i }\int_{(1)}\Phi\left(-is \frac{\log q}{2\pi}\right) \frac{1}{n^s}\, ds\\
    & = - \frac{1}{\log q}\sum_{n=1}^\infty \frac{\Lambda(n) c_f(n)}{\sqrt n}\widehat\Phi\left(\frac{\log n}{\log q}\right).
\end{align*}
\end{proof}

In order to bound sums involving primes, we will use the following standard Lemma.
\begin{lem} \label{lem:boundforL'overL} Assume GRH for $L(s, f)$, where $f$ is a primitive holomorphic Hecke eigenform or a primitive Maass Hecke eigenform of level $q$ and weight $k$. For $s = \sigma + i\tau$ with $\frac 12 < \sigma \leq \frac 54$, we have
 \begin{equation*}
-\frac{L'}{L}(s, f) \ll \frac{\big( \log(q + k + |\tau|)\big)^{^{\frac 43 - \frac{2\sigma}{3}} }}{2\sigma - 1}.
\end{equation*}
\end{lem}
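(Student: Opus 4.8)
The plan is to read the bound off the size of $\log L(s,f)$ just to the right of the critical line. Under GRH, $L(s,f)$ is entire and non-vanishing in $\tRe s>\tfrac12$, so on that simply connected half-plane the branch of $\log L(s,f)$ that is real on $(1,\infty)$ is holomorphic. Furthermore, GRH gives the standard conditional bound
$$
\log\bigl|L^{\pm1}(\sigma'+it,f)\bigr|\ \ll\ \frac{\bigl(\log(q+k+|t|)\bigr)^{2-2\sigma'}}{\log\log(q+k+|t|)}\qquad\bigl(\tfrac12\le\sigma'\le1\bigr),
$$
together with the elementary bound $\log|L^{\pm1}(\sigma'+it,f)|\ll1$ (up to a harmless triple logarithm) for $\sigma'\ge1$; the case $\sigma'=\tfrac12$ is the critical-line estimate of Chandee--Soundararajan type, and the intermediate range follows by the same (Littlewood-type) argument, or by interpolation. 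The crucial feature is the exponent $2-2\sigma'<1$ for $\sigma'>\tfrac12$: this genuinely uses GRH, since the convexity bound yields only $\log|L(\sigma'+it,f)|\ll(1-\sigma')\log(q+k+|t|)$, which is linear in $\log(q+k+|t|)$ and would produce mere exponent $1$ below.

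Given this input, fix $s=\sigma+i\tau$ with $\tfrac12<\sigma\le\tfrac54$ and put $R:=\tfrac12\bigl(\sigma-\tfrac12\bigr)$. The closed disk of radius $R$ about $s$ lies in $\{\tRe w\ge\tfrac12(\sigma+\tfrac12)\}\subset\{\tRe w>\tfrac12\}$, so $g:=\log L(\cdot,f)$ is holomorphic on it; on its boundary $\tRe g=\log|L|\ll(\log(q+k+|\tau|))^{2-2(\sigma-R)}=(\log(q+k+|\tau|))^{3/2-\sigma}$, and $\bigl|\tRe g(s)\bigr|=\bigl|\log|L(s,f)|\bigr|$ is dominated by the same quantity. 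Applying the Borel--Carath\'eodory inequality (with inner radius $R/2$) followed by Cauchy's estimate gives
$$
-\frac{L'}{L}(s,f)=-(\log L)'(s,f)\ \ll\ \frac{1}{R}\bigl(\log(q+k+|\tau|)\bigr)^{3/2-\sigma}\ \ll\ \frac{\bigl(\log(q+k+|\tau|)\bigr)^{3/2-\sigma}}{2\sigma-1}.
$$
Since $3/2-\sigma\le\tfrac43-\tfrac{2\sigma}{3}$ for $\sigma\ge\tfrac12$ and $\log(q+k+|\tau|)\ge1$, this is $\ll(\log(q+k+|\tau|))^{4/3-2\sigma/3}/(2\sigma-1)$, which is the claim (in fact with a bit of room). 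A more self-contained variant avoids importing the conditional bound on $\log|L|$: one first proves the weaker estimate $-\frac{L'}{L}(s,f)\ll\log(q+k+|\tau|)/(2\sigma-1)$ directly from the Hadamard factorisation of $\Lambda(s,f)$, GRH (which forces $|s-\rho|\ge\sigma-\tfrac12$ for every nontrivial zero $\rho$), and the unit-interval zero count $\#\{\rho:|\gamma-\tau|\le1\}\ll\log(q+k+|\tau|)$; integrating this in $\sigma$ bounds $\log L$ on a vertical line $\tRe s=\tfrac12+\eta$, after which Phragm\'en--Lindel\"of in the strip $\tfrac12+\eta\le\tRe s\le\tfrac54$ combined with Cauchy's estimate, taking $\eta\asymp1/\log(q+k+|\tau|)$ and a well-chosen radius, recovers the exponent $\tfrac43-\tfrac{2\sigma}{3}$.

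The step I expect to be most delicate is securing the GRH-conditional bound on $\log|L^{\pm1}|$ with full uniformity in $q$, $k$ and $\tau$, and in particular as $\sigma'$ tends to $\tfrac12$ — this is the sole genuinely non-elementary input, and its behaviour near $\sigma'=\tfrac12$ is exactly what dictates the shape of the claimed estimate (it is also why using the crude ``conductor'' $q+k+|\tau|$ is convenient: it lets one be cavalier about the precise archimedean factors). In the self-contained variant the delicate bookkeeping instead concentrates in the transition region $\sigma-\tfrac12\asymp1/\log\log(q+k+|\tau|)$, where the exponent furnished by the convexity argument is nearest to $1$ and the double-logarithmic losses are hardest to absorb; making the two regimes (this one, and $\sigma$ bounded away from $\tfrac12$) overlap cleanly then requires a little care in the choice of radius.
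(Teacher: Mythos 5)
Your argument is correct but follows a genuinely different route from the paper's. The paper works directly with the truncated Dirichlet series for $L'/L$: it quotes (5.59) of Iwaniec--Kowalski,
$$-\frac{L'}{L}(s,f)=\sum_n\frac{\Lambda_f(n)}{n^s}\phi\!\left(\frac nZ\right)+O\!\left(\frac{\log(q+k+|\tau|)}{2\sigma-1}Z^{\frac12-\sigma}\right),$$
bounds the main term by the \emph{trivial} coefficient estimate $|\Lambda_f(n)|\le n$ (giving $\ll Z^{2-\sigma}$ rather than the $Z^{1-\sigma}\log Z$ that Ramanujan--Petersson would give), and then chooses $Z\asymp(\log(q+k+|\tau|))^{2/3}$ so that main and error terms balance; this is exactly where the exponent $\tfrac43-\tfrac{2\sigma}{3}$ comes from. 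Your route instead invokes a GRH-conditional bound on $\log|L(\sigma'+it,f)|$ of shape $(\log C)^{2-2\sigma'}$ and feeds it through Borel--Carath\'eodory with radius $R\asymp\sigma-\tfrac12$, producing the (in fact slightly sharper) exponent $\tfrac32-\sigma$, after which you correctly observe $\tfrac32-\sigma\le\tfrac43-\tfrac{2\sigma}{3}$ for $\sigma\ge\tfrac12$ and $\log(q+k+|\tau|)\ge1$. The trade-off is in the input: as you yourself flag, the GRH bound on $\log|L^{\pm1}|$ uniformly in $q$, $k$, $\tau$ is the real content, and in the Maass case one cannot simply quote the holomorphic/RP version --- one must rerun the Littlewood argument with only an unconditional bound $|\alpha_f(p)|<p^{1/2}$ (or Kim--Sarnak $\theta=7/64$) on the local roots, which is precisely the complication the paper's proof is engineered to sidestep (the authors explicitly describe it as Theorem 5.17 of \cite{IK} ``without assuming the Ramanujan--Petersson conjecture''). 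In short: your method is correct and would even give a marginally better exponent once the $\log|L|$ bound is secured, but the paper's choice is shorter and needs only the trivial coefficient bound and no separate estimate for $\log|L|$ near the critical line; your self-contained variant (Hadamard factorisation, GRH zero repulsion, unit-interval zero count, then Phragm\'en--Lindel\"of) is essentially Littlewood's classical scheme and would also close the gap, at the cost of more bookkeeping than the paper's one-line choice of $Z$.
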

\begin{proof}
We will modify the proof of Theorem~5.17 in \cite{IK} without assuming the Ramanujan-Petersson conjecture for $L(s, f)$. Equation (5.59) in \cite[Chapter 5]{IK} implies that if $Z \geq 1$ and $s = \sigma + i\tau$ with $\frac 12 < \sigma \leq \frac 54$, then
\begin{equation} \label{eqn:L'LfromIK} - \frac{L'}{L}(f, s) = \sum_{n} \frac{\Lambda_f(n)}{n^s} \phi\left( \frac nZ\right) + O\left( \frac{\log (q + k + |\tau|)}{2\sigma - 1}Z^{\frac 12 - \sigma}\right).
\end{equation}
Here, $\phi(y)$ is a non-negative continuous function on $[0, \infty)$ whose Mellin transform $\tilde \phi(w)$ satisfies
$$ w(w+1) \tilde \phi(w) \ll 1 \ \ \ \ \textrm{for $w$ such that} \ \ \ - \frac 12 \leq \tRe(w) \leq  2.$$
Moreover, $\tilde\phi(w)$ has a simple pole at $w = 0$ with residue $1$.  For instance, we may take $\phi(y) = e^{-y}$ and $\tilde\phi(w) = \Gamma(w)$.

We know that $|\Lambda_f(n)| \leq n$ and it follows that
$$ \sum_n \frac{\Lambda_f(n)}{n^s} \phi\left( \frac n Z\right) \ll \sum_n n^{1-\sigma} \phi\left( \frac n Z\right)  \ll Z^{2-\sigma},$$ via a standard contour integration argument.  
Choosing $Z = \log^{2/3}(q + k + |\tau|) $ in \eqref{eqn:L'LfromIK} and the bound above, we obtain our desired result.
\end{proof}

{\lem\label{lem:CLee3.5} Assume GRH for $L(s,\chi)$ with $\chi$ mod $q$ and for $L(s, f)$, where $f$ is a primitive holomorphic Hecke eigenform or a primitive Maass Hecke eigenform of level $q$ and weight $k$. Let $X>0$ be a real number, and let $\Psi$ be a smooth function that is compactly supported on $[0,X]$. Suppose that, for each positive integer $m$, there exists a constant $A_m$ depending only on $m$ such that
$$
|\Psi^{(m)}(x)| \leq \frac{A_m}{\min\{\log(X+3), X/x\}x^{m}}
$$
for all $x>0$. Write $z = \frac 12 +it$ with $t$ real, and let $N$ be a positive integer. If $\chi$ is a non-principal character, then
$$\sum_{(p, N) = 1}\frac{\chi(p)\log(p)\Psi(p)}{p^z}\ll A_3 \log^{1 + \epsilon}( X + 2) \log(q+|t| ) + \log N \max_{0 \leq x \leq X} |\Psi(x)|,$$
with absolute implied constant. Similarly, 
$$\sum_{(p, N) = 1}\frac{\lambda_f(p)\log(p)\Psi(p)}{p^z}\ll A_3 \log^{1 + \epsilon}( X + 2) \log (q+k+|t| ) + \log N \max_{0 \leq x \leq X} |\Psi(x)|,$$
with absolute implied constant.}

\noindent \textit{Remark}: If $c$ is a fixed constant and $\Upsilon$ is a smooth function compactly supported on $[0,c]$, then the function $\Psi(x)=\Upsilon(cx/X)$ satisfies the conditions in Lemma~\ref{lem:CLee3.5} since $X^{-m} \ll x^{-m} (x/X)$ for positive integers $m$. Also, if $\Upsilon$ is a smooth function compactly supported on $(-\infty,c]$, then the function $\Psi(x)=\Upsilon(\frac{c\log x}{\log X})$ satisfies the conditions in the lemma.

\begin{proof}
Observe first that, by repeated integration by parts, the Mellin transform $\widetilde{\Psi}$ of $\Psi$ satisfies
\begin{align*}
\widetilde{\Psi}(s) 
& = -\frac{1}{s(s+1)(s+2)} \int_0^X  \Psi^{(3)}(x) x^{s+2}\,dx \\
& \ll \frac{A_3}{|s(s+1)(s+2)|}\Bigg(\int_0^{X/\log (X+3)} \frac{x^{\text{Re}(s)-1}}{\log(X+3)} \,dx + \int_{X/\log (X+3)}^{X} \frac{x^{\text{Re}(s)}}{X}\Bigg)\\
& \ll \frac{A_3}{|s(s+1)(s+2)|} \Bigg(\frac{X^{\text{Re}(s)}}{\text{Re}(s)\log(X+3)} + \frac{X^{\text{Re(s)}}}{\text{Re}(s)+1}\Bigg)
\end{align*}
for Re$(s)>0$. Thus, if Re$(s)= 1/\log (X+2)$, then
\begin{equation}\label{modifiedCLee3.5mellin}
\widetilde{\Psi}(s) \ll \frac{A_3}{|s(s+1)(s+2)|}.
\end{equation}

Now to prove the first assertion of the lemma,  we write 
\est{ \sum_{(p, N) = 1}\frac{\chi(p)\log(p)\Psi(p)}{p^z} &= \sum_{p}\frac{\chi(p)\log(p)\Psi(p)}{p^z} - \sum_{p | N}\frac{\chi(p)\log(p)\Psi(p)}{p^z}.}
The second sum on the right-hand side is bounded by $\sum_{p | N} \log p \ll \log N$ times the maximum value of $|\Psi|$. We may bound the first sum by slightly modifying the proof of Lemma~2.3 in \cite{CKLL} and using the Mellin transform bound \eqref{modifiedCLee3.5mellin}. The difference is that here we move the contour to $\frac 1{\log (X+2)}$ instead of $\frac{20}{\log q}$, and use the estimate 
$$ \frac{L'}{L}(s, \chi) \ll \log( X + 2) \log (q +  |t| + 2),$$
which follows from Theorem~5.17 of \cite{IK} when $\sigma =\frac 12 +  \frac 1{\log (X+2)}$.  The extra factor of $\log^\epsilon (X+2)$ arises from \eqref{modifiedCLee3.5mellin} and
$$\int_0^\infty \left(\frac{1}{\frac{1}{\log (X+2)} + t}\right) \left(\frac{\log(t+2)}{(t+1)(t+2)}\right) dt \ll \int_0^1 \frac{1}{\frac{1}{\log (X+2)} + t} dt + 1 \ll \log \log (X+3)
$$via a change of variables.

The proof in the case of $L(s, f)$ is similar.  We first deal with the coprimality condition by writing 
\begin{equation}\label{eqn: CLee3.5Maass}
\sum_{(p, N) = 1}\frac{\lambda_f(p)\log(p)\Psi(p)}{p^z} = \sum_{p}\frac{\lambda_f(p)\log(p)\Psi(p)}{p^z} - \sum_{p |N}\frac{\lambda_f(p)\log(p)\Psi(p)}{p^z}.
\end{equation}
We may bound the first sum on the right-hand side in a way similar to the case with Dirichlet characters. Indeed, the main input is the estimate
\begin{equation*}
\frac{L'}{L}(s, f) \ll \log(q+k+|\tau|)\log (X+2),
\end{equation*}
which holds for $s = \sigma_0 + i\tau$ with $\sigma_0 = \frac 12 + \frac{1}{\log (X+2)}$ and is a consequence of Lemma~\ref{lem:boundforL'overL}. Thus, the first sum on the right-hand side of \eqref{eqn: CLee3.5Maass} is
$$
\ll A_3\log^{1 + \epsilon} (X + 2)\log (q+k+|t| ).
$$
As for the second sum, we may use the crude bound $\lambda_f(p) \ll \sqrt p$ to deduce that
$$ \sum_{p |N}\frac{\lambda_f(p)\log(p)\Psi(p)}{p^z} \ll \sum_{p | N} \log p \max_{0 \leq x \leq X} |\Psi(x)| \ll \log N\max_{0 \leq x \leq X} |\Psi(x)|. $$
\end{proof}

\section{Kuznetsov's formula} \label{sec:kutz}

In this section, we will state some relevant results from spectral theory. We refer the reader to \cite{DI} and \cite{Iwaniec} for background reading.

We now introduce some notation that will appear in Kuznetsov's formula. There are three parts in Kuznetsov's formula---contributions from holomorphic forms, Maass forms, and Eisenstein series---and we now define the Fourier coefficients of these forms.

\subsubsection*{Holomorphic forms}

Let $z = x + iy$. Let $B_{\ell}(q)$ be an orthonormal basis of the space of holomorphic cusp forms of weight $\ell$ level $q$, and  $\theta_{\ell}(q)$ is the dimension of the space $S_{\ell}(q)$. We can write $B_{\ell}(q) = \{f_1, f_2,...., f_{\theta_{\ell}(q)} \}$, and the Fourier expansion of $f_j \in B_{\ell}(q)$ can be expressed as follows
$$ f_j(z) = \sum_{n \geq 1} \psi_{j, \ell}(n) (4\pi n)^{\ell / 2} \e{n z}.$$

We call $f$ a Hecke eigenform if it is an eigenfunction of all the Hecke operators $T(n)$ for $(n, q) = 1$.  In that case, we denote the Hecke eigenvalue of $f$ for $T(n)$ as $\lambda_f(n)$.  Writing $\psi_f(n)$ as the Fourier coefficient, we have that
$$\lambda_f(n) \psi_f(1) = \sqrt{n} \psi_f(n),
$$for $(n, q) = 1$.  When $f$ is a newform, this holds in general.  We also have the Ramanujan bound
$$\lambda_f(n) \ll \tau(n) \ll n^\epsilon.
$$

\subsubsection*{Maass forms}
Let 
$
\lambda_j := \frac{1}{4}+\kappa_j^2,
$
where
$
0=\lambda_0 \leq \lambda_1\leq \lambda_2 \leq \dots
$
are the eigenvalues, each repeated according to multiplicity, of the Laplacian $-y^2 ( \frac{\partial^2}{\partial x^2} + \frac{\partial^2}{\partial y^2})$ acting as a linear operator on the space of cusp forms in $L^2(\Gamma_0(q) \backslash \mathbb{H})$, where by convention we choose the sign of $\kappa_j$ that makes $\kappa_j\geq 0$ if $\lambda_j\geq \frac{1}{4}$ and $i\kappa_j >0$ if $\lambda_j <\frac{1}{4}$. For each of the positive $\lambda_j$, we may choose an eigenvector $u_j$ in such a way that the set $\{u_1,u_2,\dots\}$ forms an orthonormal system, and we define $\rho_j(m)$ to be the $m$th Fourier coefficient of $u_j$, i.e.,
$$
u_j(z) =  \sum_{m\neq 0} \rho_j(m)W_{0, i\kappa_j} (4\pi |m|y) \e{mx}
$$
with $z=x+iy$, where $W_{0, it}(y) =  \left( y/\pi\right)^{1/2}K_{it}(y/2)$ is a Whittaker function, and $K_{it}$ is the modified Bessel function of the second kind. 

We call $u$ a Hecke eigenform if it is an eigenfunction of all the Hecke operators $T(n)$ for $(n, q) = 1$.  In that case, we denote the Hecke eigenvalue of $u$ for $T(n)$ as $\lambda_u(n)$.  Writing $\rho_u(n)$ as the Fourier coefficient, we have that
\begin{equation}\label{eqn: fouriercoeffintermsofeigenvalue}
\lambda_u(n) \rho_u(1) = \sqrt{n} \rho_u(n)
\end{equation}
for $(n, q) = 1$.  When $u$ is a newform, this holds in general.  We also have that
$$\lambda_u(n) \ll \tau(n) n^{\theta} \ll n^{\theta+ \epsilon},
$$where we may take $\theta = \frac{7}{64}$ due to work of Kim and Sarnak \cite{KimS}.

\subsubsection*{Eisenstein series}
Let $\mathfrak{c}$ be a cusp for $\Gamma_0(q)$.  We define $\varphi_{\mathfrak{c}}(m, t)$ to be the $m$th Fourier coefficient of the (real-analytic) Eisenstein series at $1/2 + it$:
\begin{align*}
E_{\mathfrak{c}}(z; 1/2 + it)  &=
  \delta_{\mathfrak{c} = \infty} y^{1/2 + it} + \varphi_{\mathfrak{c}}(0,t) y^{1/2 - it}  + \sum_{m\neq 0}  \varphi_{\mathfrak{c}} (m, t) W_{0, it} (4\pi |m|y) \e{mx},
\end{align*}
where $z=x+iy$.

\subsubsection*{Kuznetsov's formula} We state the version given by Lemma~10 of \cite{BM}.
\begin{lem}\label{lem:kuznetsov}
Let $\phi:(0,\infty)\rightarrow \mathbb{C}$ be smooth and compactly supported, and let $m,n,q$ be positive integers. Then
\begin{align*}
\sum_{\substack{c\geq 1 \\ c\equiv 0 \bmod{q}}} \frac{S(m,n;c)}{c} \phi
& \bigg( 4\pi \frac{\sqrt{mn}}{c}\bigg) = \sum_{j=1}^{\infty} \frac{\overline{\rho_j}(m)\rho_j(n) \sqrt{mn}}{\cosh(\pi \kappa_j)}\phi_+(\kappa_j)\\
& + \frac{1}{4\pi} \sum_{\mathfrak{c}} \int_{-\infty}^{\infty} \frac{ \sqrt{mn}}{\cosh (\pi t)} \, \overline{\varphi_{\mathfrak{c}} (m, t)}  \varphi_{\mathfrak{c} }(n, t)  \phi_+ (t) \,dt \\
& + \sum_{ \substack{\ell \geq 2 \mbox{\scriptsize{ \upshape{even}}} \\ 1 \leq j \leq \theta_{\ell}(q)} } (\ell-1)! \sqrt{mn} \, \overline{\psi_{j,\ell}}(m) \psi_{j,\ell} (n) \phi_h(\ell),
\end{align*}
where the Bessel transforms $\phi_+$ and $\phi_h$ are defined by
$$
\phi_+(r):=\frac{2\pi i}{\sinh(\pi r)} \int_0^{\infty} (J_{2ir}(\xi) - J_{-2ir} (\xi) ) \phi(\xi) \,\frac{d\xi}{\xi}
$$
and
$$
\phi_h(\ell) := 4 i^k \int_0^{\infty} J_{\ell - 1}(\xi) \phi(\xi) \,\frac{d\xi}{\xi},
$$
where $J_{\ell - 1}$ is the Bessel function of the first kind.

\end{lem}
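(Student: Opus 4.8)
This is the Kuznetsov trace formula for $\Gamma_0(q)$ at the cusp $\infty$, in its ``same sign'' shape (for $S(m,n;c)$ with $m,n>0$, which is why a holomorphic-form term is present), and I would prove it by the classical Poincar\'e-series method, taking care to match the normalizations of $\rho_j$, $\varphi_{\mathfrak c}$ and $\psi_{j,\ell}$ fixed above. The cleanest packaging is to first split the test function. By the completeness of the $J$-Bessel system on $(0,\infty)$ --- the Sears--Titchmarsh theorem, which asserts that $\{J_{2ir}-J_{-2ir}:r>0\}$ together with the discrete family $\{J_{\ell-1}:\ell\ge 2\text{ even}\}$ spans --- write $\phi=\phi_{\mathrm{disc}}+\phi_{\mathrm{cont}}$, where $\phi_{\mathrm{disc}}(\xi)=\sum_{\ell\ge 2\text{ even}}c_\ell J_{\ell-1}(\xi)$ with $c_\ell$ proportional to $\int_0^\infty J_{\ell-1}(\xi)\phi(\xi)\,d\xi/\xi$, and $\phi_{\mathrm{cont}}:=\phi-\phi_{\mathrm{disc}}$. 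The plan is to treat $\phi_{\mathrm{disc}}$ with Petersson's formula, treat $\phi_{\mathrm{cont}}$ with a Poincar\'e-series/spectral argument, and add.

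For $\phi_{\mathrm{disc}}$ I would apply Petersson's formula for holomorphic forms of each even weight $\ell\ge 2$ and level $q$ (Lemma~\ref{lem:usualPetersson} with $k=\ell$) to evaluate $\sum_{c\equiv 0\bmod q}c^{-1}S(m,n;c)\,J_{\ell-1}(4\pi\sqrt{mn}/c)$ in terms of $\Delta_{\ell,q}(m,n)$, then sum over $\ell$ against the coefficients $c_\ell$. Rewriting $\Delta_{\ell,q}(m,n)$ through an orthonormal basis $B_\ell(q)$ and its Fourier coefficients $\psi_{j,\ell}$ produces exactly the term $\sum_{\ell,j}(\ell-1)!\sqrt{mn}\,\overline{\psi_{j,\ell}}(m)\psi_{j,\ell}(n)\,\phi_h(\ell)$, together with the elementary contributions coming from the $\delta(m,n)$ in Petersson's formula. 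The bookkeeping that converts the harmonic-weight normalization of $\Delta_{\ell,q}$ into the orthonormal-basis normalization and collects the constants $(\ell-1)!$ and the relevant power of $i$ is a routine but careful computation.

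For $\phi_{\mathrm{cont}}$ I would run the Poincar\'e-series argument. Fix $m\ge 1$ and a smooth test function $\eta$ on $(0,\infty)$ with suitable decay, and form
\[
P_m(z)=\sum_{\gamma\in\Gamma_\infty\backslash\Gamma_0(q)}\eta\bigl(\operatorname{Im}\gamma z\bigr)\,\e{m\operatorname{Re}\gamma z}\;\in\;L^2\bigl(\Gamma_0(q)\backslash\mathbb{H}\bigr).
\]
I would then compute its $n$-th Fourier coefficient $\int_0^1 P_m(x+iy)\e{-nx}\,dx$, a function of $y>0$, in two ways: geometrically, via the Bruhat decomposition of $\Gamma_\infty\backslash\Gamma_0(q)/\Gamma_\infty$ --- the lower-left entry $c$ runs over positive multiples of $q$, the character sum over residues is $S(m,n;c)$, and the $x$-integral is a Bessel transform of $\eta$ --- and spectrally, by expanding $P_m$ over the constants, the Maass cusp forms $u_j$, and the Eisenstein series $E_{\mathfrak c}(\cdot,1/2+it)$, where unfolding gives $\langle P_m,u_j\rangle=\overline{\rho_j(m)}\,\mathcal T(\eta;\kappa_j)$ with $\mathcal T(\eta;\kappa_j)=\int_0^\infty\eta(y)\overline{W_{0,i\kappa_j}(4\pi my)}\,y^{-2}\,dy$, and similarly with $\varphi_{\mathfrak c}(m,t)$. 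Equating the two functions of $y$ and comparing their Kontorovich--Lebedev expansions (the inversion relating $W_{0,ir}$ to $J_{2ir}-J_{-2ir}$) lets one choose $\eta$ matching $\phi_{\mathrm{cont}}$, converts $\mathcal T(\eta;\kappa_j)$ into $\phi_+(\kappa_j)$ and the Eisenstein transform into $\phi_+(t)$, and delivers the stated continuous-spectrum terms; convergence of the spectral side and the interchange of $\sum_j$ with integration follow from Weyl's law for $\{\kappa_j\}$ together with the rapid decay of $\mathcal T(\eta;r)$ in $r$.

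I expect the main obstacle to be the analytic input behind the two inversions --- the Sears--Titchmarsh completeness used to split $\phi$, and the Kontorovich--Lebedev inversion used to identify $\phi_+$ --- and the verification that all the normalizing constants ($\tfrac1{4\pi}$, $\cosh(\pi\kappa_j)^{-1}$, $(\ell-1)!$, the powers of $i$) come out as stated, given the definitions of $\rho_j$, $\varphi_{\mathfrak c}$, $\psi_{j,\ell}$, $E_{\mathfrak c}$ and of the hyperbolic measure fixed above; the Bruhat decomposition, the unfolding, the application of Petersson's formula, and the convergence estimates are routine. An alternative that avoids the preliminary splitting is to work with the Poincar\'e series $P_m(z,s)$ carrying a spectral parameter $s$: its Fourier coefficients involve $S(m,n;c)$ and $J$-Bessel functions, and building a general $\phi$ through an $s$-contour integral and then shifting the contour leftward past the discrete-series points produces the holomorphic term as a residue, with the same constant-matching to be done.
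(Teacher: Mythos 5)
The paper does not prove this lemma at all: it is quoted verbatim from Lemma~10 of \cite{BM} (which in turn rests on the standard derivations in \cite{DI} and in Iwaniec's book on spectral methods), so there is no internal argument to compare yours against. Your sketch follows the classical route and its architecture is sound: split $\phi$ via Sears--Titchmarsh completeness into a Neumann-series part $\sum_{\ell}c_\ell J_{\ell-1}$ plus a remainder in the image of the $+$ transform, handle the former by summing Petersson's formula (Lemma~\ref{lem:usualPetersson}) over even weights $\ell\ge 2$, and handle the latter by the double (geometric via Bruhat, spectral via the $L^2$-expansion) computation of the Fourier coefficients of a Poincar\'e series, with Kontorovich--Lebedev inversion identifying the transform $\phi_+$. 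That is exactly how the formula is proved in the literature. Two caveats. First, as written this is a roadmap rather than a proof: the Sears--Titchmarsh and Kontorovich--Lebedev inversions are the genuinely hard analytic content and you invoke them as black boxes, and the normalization-matching (the $\tfrac1{4\pi}$, $\cosh(\pi\kappa_j)^{-1}$, $(\ell-1)!$, and the powers of $i$ in $\phi_h$) is deferred. Second, there is a concrete loose end: each application of Petersson's formula contributes a $\delta(m,n)$ term, and the identity coset ($c=0$) in the Bruhat decomposition contributes a diagonal term to the geometric computation of the Poincar\'e coefficient, yet no $\delta(m,n)$ term appears in the final formula; you must verify that these diagonal contributions cancel (equivalently, that the split of $\phi$ is compatible with the diagonal), and your sketch is silent on this. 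None of this makes the approach wrong, but for the purposes of the paper the citation is the appropriate substitute for carrying it out.
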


We next state bounds for the transforms $\phi_+$ and $\phi_h$ that appear in Kutznetsov's formula. These bounds are consequences of Lemma~1 of \cite{BHM}. 
\begin{lem} \label{lem:boundforcoefficientafterKuznetsov}
\begin{enumerate}
\item Let $\phi(x)$ be a smooth function supported on $x \asymp X$ such that $\phi^{(j)}(x) \ll_{j} X^{-j}$ for all integers $j \geq 0$. For $t \in \mathbb R$, we have
$$ \phi_+(t), \ \  \phi_h(t) \ll_C \frac{1 + |\log X|}{1 + X} \left( \frac{1 + X}{1 + |t|}\right)^C$$
for any constant $C \geq 0.$
\item  Let $\phi(x)$ be a smooth function supported on $x \asymp X$ such that $\phi^{(j)}(x) \ll_{j} (X/Z)^{-j}$ for all integers $j \geq 0$. For $t \in (-1/4, 1/4)$, we have
$$ \phi_+ (it) \ll \frac{1 + (X/Z)^{-2 |t|}}{1 + X/Z}.$$
\item Assume that $\phi(x) = e^{iax} \psi(x)$ for some constant $a$ and some smooth function $\psi(x)$ supported on $x \asymp X$ such that $\psi^{(j)}(x) \ll_j X^{-j}$ for all integers $j \geq 0.$  Then
$$ \phi_+(t), \ \  \phi_h(t) \ll_{C, \epsilon} \frac{1+|\log X|}{F^{1 -\epsilon}} \left( \frac{F}{1 + |t|} \right)^C$$
for any $C \geq 0,$ $\epsilon > 0$ and some $F = F(X, a) < (|a|+1)(X+1).$
\end{enumerate} 
\end{lem}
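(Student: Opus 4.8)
The plan is to obtain all three bounds as specializations of Lemma~1 of \cite{BHM}, which estimates the Bessel transforms $\phi_+(t)$, $\phi_+(it)$ and $\phi_h(\ell)$ (as defined in Lemma~\ref{lem:kuznetsov}) attached to a weight function $\phi$ in terms of the scale $X$ on which $\phi$ is supported, the frequency of oscillation carried by $\phi$, and a decay exponent in the spectral parameter. Almost all of the work is the translation: one must check that the data of each of our three cases matches the hypotheses of that lemma and then read off the conclusion. It is worth keeping in mind what drives such estimates. On the support of $\phi$ one expands the relevant Bessel function using the uniform asymptotics of Lemma~\ref{jbessel} (and, for $\phi_+$, the analogous expansions of $J_{\pm 2ir}$ above the order $|r|$, together with $J_\nu(\xi)\asymp(\xi/2)^{\operatorname{Re}\nu}$ below the order), writes it as a product of a pure oscillation $e^{\pm i\xi}$ and a slowly varying amplitude, combines that phase with any phase already present in $\phi$, and integrates by parts repeatedly; each integration by parts gains a factor $|t|/F$ (respectively $\ell/F$), where $F$ is the total frequency of the integrand, until $|t|$ (resp.\ $\ell$) exceeds $F$, at which point the transform is negligible.

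For part~(1) one applies Lemma~1 of \cite{BHM} to a non-oscillatory $\phi$ supported on $x\asymp X$ with $\phi^{(j)}(x)\ll_j X^{-j}$. Here the only oscillation is the Bessel factor's $e^{\pm i\xi}$ over the support, so the relevant frequency is $F\asymp 1+X$, which produces both the decay $\bigl(\tfrac{1+X}{1+|t|}\bigr)^C$ and the amplitude $\tfrac{1}{1+X}$. The extra factor $1+|\log X|$ comes from the small-$X$ regime, where the near-origin behaviour of the Bessel functions contributes a $\log(\xi/2)\asymp\log X$ when integrated against $\phi(\xi)\,d\xi/\xi$. Part~(3) is the same mechanism with $\phi(x)=e^{iax}\psi(x)$: the oscillation $e^{iax}$ of $\phi$ now combines with the Bessel oscillation $e^{\pm i\xi}$, so the effective frequency is an explicit quantity $F=F(X,a)$ obeying $F<(|a|+1)(X+1)$, the strict inequality reflecting that when $|a|$ is close to $1$ the phases $e^{iax}$ and $e^{\mp i\xi}$ partially cancel on the support and the frequency can be much smaller than the crude bound $(|a|+1)(X+1)$. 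The $\epsilon$-loss in $F^{1-\epsilon}$ is the standard cost of converting ``square-root cancellation up to the conductor'' into a clean estimate.

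For part~(2) one instead specializes the lemma to the evaluation $\phi_+(it)$ with $t\in(-1/4,1/4)$, the regime of the complementary (exceptional) spectrum, so that the Bessel functions in the definition of $\phi_+$ have real order $\mp 2t$ rather than a purely imaginary one. Here $\phi$ is supported on $x\asymp X$ but its amplitude varies on the finer scale $X/Z$, and the size of $\phi_+(it)$ is governed by $\frac{1}{\sin(\pi t)}\bigl(J_{-2t}(\xi)-J_{2t}(\xi)\bigr)$, which near $\xi=0$ is a constant times $\xi^{-2|t|}\bigl(1+O(\xi^{2})\bigr)$; integrating this against $\phi(\xi)\,d\xi/\xi$ over the support yields the numerator $1+(X/Z)^{-2|t|}$, while the denominator $1+X/Z$ is the normalisation. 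I expect the main obstacle to be exactly this bookkeeping --- confirming that our formulation (support scale $X$, derivative scale $X/Z$, the range $|t|<1/4$) maps onto the hypotheses of Lemma~1 of \cite{BHM}, that the potentially large factor $(X/Z)^{-2|t|}$ is correctly confined to the regime $X/Z\ll 1$, and that it does not interfere with the oscillatory behaviour away from the origin. Once this parameter dictionary is in place, the three inequalities follow directly.
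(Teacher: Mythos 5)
Your approach is the same as the paper's: the paper gives no proof of this lemma but simply cites Lemma~1 of \cite{BHM} as its source, and then, in the remark immediately following the statement, observes that part~(3) is \emph{not} a direct specialization of Lemma~1(c) of \cite{BHM} but a ``slight generalization'' of it, one that incorporates Lemma~1(a) so as to hold uniformly for all $a$. Your write-up nearly captures this --- the discussion of the interpolating quantity $F<(|a|+1)(X+1)$ is exactly the point --- but you describe the goal as obtaining all three bounds ``as specializations of Lemma~1 of \cite{BHM},'' which understates what is required for (3); since \cite{BHM}'s Lemma~1(c) presumably carries a lower bound on the oscillation parameter, the uniform-in-$a$ version must be checked by rerunning the argument, not merely read off. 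One further small slip in your heuristic: you say each integration by parts ``gains a factor $|t|/F$''; it should be $F/|t|$ (equivalently, one gains a power of $\tfrac{F}{1+|t|}$), which is why the transform becomes negligible once $|t|$ exceeds $F$, as you correctly conclude. Neither issue changes the route, which is the same as the paper's.
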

Our Lemma~\ref{lem:boundforcoefficientafterKuznetsov}~(3) is a slight generalization of Lemma~1~(c) in \cite{BHM}. This generalization incorporates the bound in Lemma~1~(a) in \cite{BHM}. It is convenient for us that Lemma~\ref{lem:boundforcoefficientafterKuznetsov}~(3) holds uniformly for all $a$.

Next, we record the following bounds, which we deduce from our previous lemmas.
\begin{lem} \label{lem:boundforhu}
Suppose that $W$ is a smooth function that is compactly supported on $(0,\infty)$. For real $X>0$ and real numbers $u$ and $\xi$, let 
\begin{align*}
h_u(\xi)  = J_{k-1}(\xi)W\bfrac{\xi}{X} \e{u\xi}.
\end{align*}
Then 
\begin{enumerate}
    \item $h_+(r) \ll    \frac{1 + |\log X|}{F^{1 - \epsilon}} \left( \frac{F}{1 + |r|}\right)^C \min \left\{ X^{k - 1}, \frac 1{\sqrt X}\right\} \ \ \ \ \ \ \ \   \textrm{for some} \ F < (|u| + 1)(1 + X). $
    \item If $r\in (- 1/4,1/4)$, then $h_+(ir) \ll  \left( \frac 1{\sqrt X} + (1 + |u|)^{\frac 12}\right) \min \left\{ X^{k - 1}, \frac 1{\sqrt X}\right\}. $ 
    
\end{enumerate}

\end{lem}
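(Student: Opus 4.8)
The plan is to apply Lemma~\ref{lem:boundforcoefficientafterKuznetsov} directly to the function $h_u$, after first recording the right description of its size and oscillation. We write $J_{k-1}(\xi) = \frac{1}{2\pi\sqrt{\xi/(2\pi)}}\bigl( W_k(\xi)\e{\xi/(2\pi)-k/4+1/8} + \overline{W}_k(\xi)\e{-\xi/(2\pi)+k/4-1/8}\bigr)$ using Lemma~\ref{jbessel}, valid for $\xi$ bounded away from $0$; so on the support $\xi\asymp X$ with $X$ not too small, $h_u(\xi)$ is a sum of two terms of the shape $e^{ia\xi}\psi(\xi)$ with $a = u \pm \frac{1}{2\pi}$ and $\psi$ supported on $\xi\asymp X$ satisfying $\psi^{(j)}(\xi)\ll_j X^{-1/2}X^{-j}$ (the prefactor $\xi^{-1/2}\asymp X^{-1/2}$ and each derivative of $W_k(\xi)W(\xi/X)$ costs $X^{-1}$). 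When $X$ is small, instead use the power series $J_{k-1}(\xi)\ll \xi^{k-1}$ from Lemma~\ref{jbessel}, so that $h_u^{(j)}(\xi)\ll_j X^{k-1}X^{-j}$ after also bounding the oscillatory factor $e^{2\pi i u\xi}$ crudely; one records the uniform bound $\|h_u^{(j)}\|_\infty \ll_j \min\{X^{k-1},X^{-1/2}\}X^{-j}$ together with the fact that $h_u(\xi) = e^{2\pi i u\xi}\tilde\psi(\xi)$ for a bump function $\tilde\psi$ of this size.

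For part (1), I would apply Lemma~\ref{lem:boundforcoefficientafterKuznetsov}~(3) to $\phi = h_u$: writing $h_u(\xi) = e^{2\pi i u\xi}\psi(\xi)$ with $\psi$ supported on $\xi\asymp X$ and $\psi^{(j)}(\xi)\ll_j M X^{-j}$ where $M := \min\{X^{k-1},X^{-1/2}\}$, the lemma gives $h_+(r)\ll_{C,\epsilon} M\cdot\frac{1+|\log X|}{F^{1-\epsilon}}\bigl(\frac{F}{1+|r|}\bigr)^C$ for some $F = F(X,u) < (|u|+1)(X+1)$; pulling out the scalar $M$ from the linear transform $\psi\mapsto\psi_+$ is immediate. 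This is exactly the claimed bound. (For the small-$X$ regime where the asymptotic of Lemma~\ref{jbessel} is unavailable, one instead notes $F < (|u|+1)(X+1)$ is still the correct envelope because the $e^{2\pi i u\xi}$ oscillation is all that matters there.)

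For part (2), with $r\in(-1/4,1/4)$, I would apply Lemma~\ref{lem:boundforcoefficientafterKuznetsov}~(2), which requires writing $h_u$ as a bump function of a \emph{stretched} variable: $h_u^{(j)}(\xi)\ll_j M(X/Z)^{-j}$ with $Z$ chosen to absorb the derivative cost, i.e.\ $Z\asymp \max\{1,(|u|+1)X\}$ so that $X/Z\asymp\min\{X,(|u|+1)^{-1}\}$. Then part (2) of that lemma gives $h_+(ir)\ll M\cdot\frac{1+(X/Z)^{-2|r|}}{1+X/Z}$. Since $|r|<1/4$, we have $(X/Z)^{-2|r|}\ll (X/Z)^{-1/2}\ll \bigl(X^{-1/2}+(|u|+1)^{1/2}\bigr)$ and $\frac{1}{1+X/Z}\ll 1$, which yields $h_+(ir)\ll M\bigl(X^{-1/2}+(1+|u|)^{1/2}\bigr)$, as claimed. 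One should double-check that the oscillatory factor $e^{2\pi i u\xi}$, which is not present in the hypotheses of part (2) of Lemma~\ref{lem:boundforcoefficientafterKuznetsov}, is correctly handled: it is, precisely because it is absorbed into the choice of $Z$ — differentiating $e^{2\pi i u\xi}$ costs a factor $|u|$, which is why $Z$ must be at least of order $(|u|+1)X$ — so the function $\xi\mapsto h_u(\xi)$ literally satisfies $h_u^{(j)}(\xi)\ll_j M(X/Z)^{-j}$ with this $Z$.

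The main obstacle is bookkeeping rather than conceptual: one must track carefully (a) the two regimes $X$ large versus $X$ small, since the clean oscillatory form $e^{ia\xi}\psi(\xi)$ from Lemma~\ref{jbessel} only holds for $X$ bounded below, whereas for small $X$ one falls back on the trivial bound $J_{k-1}(\xi)\ll \xi^{k-1}$; and (b) the precise dependence on $u$, making sure the envelope $F < (|u|+1)(X+1)$ in part (1) and the stretch factor $X/Z$ in part (2) are extracted uniformly in $u$, which is exactly the uniformity noted in the remark after Lemma~\ref{lem:boundforcoefficientafterKuznetsov}. Everything else is a direct substitution into the two parts of that lemma.
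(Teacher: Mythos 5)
Your proposal is correct and takes essentially the same route as the paper: normalize by $M = \min\{X^{k-1}, X^{-1/2}\}$, split into the regimes $X\ll 1$ (power series for $J_{k-1}$) and $X\gg 1$ (oscillatory asymptotic from Lemma~\ref{jbessel}), and feed the resulting derivative bounds into parts (3) and (2) of Lemma~\ref{lem:boundforcoefficientafterKuznetsov}. The only difference is cosmetic: the paper carries out the part-(2) estimate via three separate sub-cases, whereas you package them with a single stretch parameter $Z\asymp\max\{1,(|u|+1)X\}$ (and note the trivial slip that the Bessel oscillation shifts the frequency by $\pm 1$, not $\pm 1/(2\pi)$, since $\e{\cdot}=e^{2\pi i(\cdot)}$).
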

\begin{proof}
Since $W$ is compactly supported, we may assume that $\xi \asymp X$. For $ X \ll 1$, we use the fact that
\begin{equation}\label{eqn:Jtaylor}
 J_{k-1}(\xi) = \sum_{\ell = 0} (-1)^{\ell} \frac{(\xi/2)^{2\ell + k-1}}{\ell ! (\ell + k - 1)!}
\end{equation}
by Lemma~\ref{jbessel}. Note that the first term dominates for $\xi \ll 1$. On the other hand, for $X \gg 1$, we use the fact that
\es{\label{eqn:besselbigx} J_{k-1}(\xi) =\frac{1}{ \sqrt{2\pi \xi}}\left(W_k(\xi) \e{\frac{\xi}{2\pi}-\frac{k}{4}+\frac{1}{8}} + \overline{W}_k(\xi) \e{-\frac{\xi}{2\pi}+\frac{k}{4}-\frac{1}{8} }\right),} where $W_k^{(j)}(x)\ll_{j,k} x^{-j}.$
    
Let 
\begin{align*}
g(\xi) : = \max\{ X^{1-k}, \sqrt X\} h_u(\xi), 
\end{align*}
so that
\begin{equation*}
h_u(\xi) = \min\left\{ X^{k-1}, \frac 1{\sqrt X} \right\} g(\xi)
\end{equation*}and 
\begin{equation*}
h_+(r) =  \min\left\{ X^{k-1}, \frac 1{\sqrt X} \right\} g_+(r).
\end{equation*}

In applying Lemma~\ref{lem:boundforcoefficientafterKuznetsov} below, we use the following bounds. For $X \ll 1$, we use \eqref{eqn:Jtaylor} and the product rule to deduce that
\begin{align*}
\frac{d^{j}}{d\xi^j}X^{1 - k} J_{k-1}(\xi)W\left( \frac{\xi}{X}\right) \ll_j X^{-j} \ \ \text{ and}\\
\frac{d^{j}}{d\xi^j}g(\xi) \ll_j  |u|^{j} + X^{-j}
\end{align*}
for all integers $j \geq 0$. For $X \gg 1$, we use \eqref{eqn:besselbigx} and the product rule to deduce that
\begin{align*}
\frac{d^{j}}{d\xi^j} \sqrt{\frac{X}{\xi}} W_k(\xi) W\left( \frac{\xi}{X}\right) \ll_j X^{-j} \ \text{ and}\\
\frac{d^{j}}{d\xi^j}g(\xi) \ll_j  \left(\frac {1}{2\pi} + |u|\right) ^{j} + X^{-j} \ll (1 + |u|)^j
\end{align*}for all integers $j \geq 0$, since $\frac 1X \ll 1 \ll \frac{1}{2\pi} + |u|$.

From these and Lemma~\ref{lem:boundforcoefficientafterKuznetsov}~(c), we arrive at
\begin{align*} 
g_+(r) &\ll_{C, \epsilon} \frac{1+|\log X|}{F^{1 -\epsilon}} \left( \frac{F}{1 + |r|} \right)^C  &\textrm{for some} \ F < (|u| + 1)(X + 1).
\end{align*}
This proves Lemma~\ref{lem:boundforhu}~(1).

We next bound $g_+(ir)$ for $r\in (-1/4,1/4)$. For $X \ll 1$, if $|u| \ll \frac 1X$, then $\frac{d^{j}}{d\xi^j}g(\xi) \ll_j   X^{-j}$ and thus Lemma~\ref{lem:boundforcoefficientafterKuznetsov}~(b) implies 
\begin{align*}
 g_+(ir) &\ll \frac{1 + X^{-2|r|}}{1 + X} \ll \frac{1}{\sqrt X} &\textrm{  for } r\in (-1/4,1/4). 
 \end{align*}
 On the other hand, if $|u| \gg \frac 1X$, then
 $\frac{d^{j}}{d\xi^j}g(\xi) \ll_j   \left( \frac{X}{X|u|}\right) ^{-j}$, and so
\begin{align*}
g_+(ir) &\ll  \frac{1 + |u|^{2 |r|}}{1 + \frac{1}{|u|}} \ll (1 + |u|)^{1/2} &\textrm{  for } r\in (-1/4,1/4). 
\end{align*}
For $X \gg 1,$ Lemma~\ref{lem:boundforcoefficientafterKuznetsov}~(b) implies
\begin{align*}
g_+(ir) &\ll  \frac{1 + (1 + |u|)^{2 |r|}}{1 + \frac{1}{ 1 + |u|}} \ll (1 + |u|)^{1/2} &\textrm{  for } r\in (-1/4,1/4). 
\end{align*}
In summary, we have for $r\in (-1/4,1/4)$ that
\begin{align*}
g_+(ir) \ll \frac{1}{\sqrt X} + (1 + |u|)^{\frac 12}. 
\end{align*}
This proves the lemma.

\end{proof}

\subsection{Oldforms and newforms}\label{subsec:newformoldform}
When using Petersson's formula or Kutznetsov's formula, we will frequently come across an orthonormal basis of Maass forms $\{u_j\}$, or an orthonormal basis of holomorphic modular forms $B_\ell(q)$.  However, in order to apply GRH for Hecke $L$-functions to bound our sum over primes, we need to express our basis in terms of newforms.  This theory is due to Atkin and Lehner~\cite{AL}.  We refer the reader to \S 14.7 of the book \cite{IK}, \S 2 of \cite{ILS} and \S 5 of \cite{BM} for more background.

We will state this theory for Maass forms only, although the theory applies also to holomorphic modular forms with slight changes in notation.  Let $S(\N)$ denote the space of all Maass forms of level $\N$ and $S^*(\M)$ denote the space that is orthogonal to all old forms of level $\M$.  By work of Aktin and Lehner~\cite{AL}, $S^*(\M)$ has an orthonormal basis consisting of primitive Hecke eigenforms, which we denote by $H^{*}(\M)$.  Then, we have the orthogonal decomposition
$$
S(\N) = \bigoplus_{\N = \mL \M} \bigoplus_{f \in H^*(\M)} S(\mL; f),
$$
where $S(\mL; f)$ is the space spanned by $f|_l$ for $l|\mL$, where 
$$
f|_l(z) = f(lz).
$$

Let $f$ denote a newform of level $\M|\N$, normalized as a level $\N$ form, which means that the first coefficient satisfies 
\begin{equation*}
|\rho_{ f}(1)|^2 = \frac{(\N t_f)^{o(1)}}{\N},
\end{equation*}
where $t_f$ is the spectral parameter of $f$.

Blomer and Milicevic showed in Lemma 9 of \S 5 of \cite{BM} that there is an orthonormal basis for $S(\mL; f)$ of the form $f^{(g)}$ for $g|\mL$, where
\begin{equation}\label{eqn:BMorthonormalform}
f^{(g)} = \sum_{d|g} \xi_g(d) f|_d,
\end{equation}
where $\xi_g(d)$ is defined in (5.6) of \cite{BM} and satisfies
\begin{equation*}
\xi_g(d) \ll g^\epsilon \bfrac{g}{d}^{\theta - 1/2}.
\end{equation*}
For convenient reference, we will call this basis the Hecke basis of level $\N$.

Lemma 9 of \cite{BM} implies that the Fourier coefficients of $f^{(g)}$ satisfy
\begin{equation} \label{eqn:fouriercoeffbdd}
\sqrt{n} \rho_{f^{(g)}}(n) \ll (n\N)^\epsilon n^\theta (\N, n)^{1/2 - \theta} |\rho_{f}(1)| \ll \N^\epsilon n^{1/2+\epsilon} |\rho_{f}(1)|.
\end{equation}
This bound is somewhat crude, but will suffice for our purposes. Note that $f^{(g)}$ is an eigenfunction of the Hecke operator $T(n)$ for all $(n, \mathcal N)  =1$. Indeed, the $n$th Fourier coefficient of $f|_d$ is nonzero only if $d|n$. Since $g|\N$ in \eqref{eqn:BMorthonormalform}, it follows for $(n, \N) = 1$ that we may take only the $d=1$ term and deduce that
\begin{align*}
\rho_{f^{(g)}}(n) = \xi_g(1) \rho_f(n) = \xi_g(1) \rho_f(1) \lambda_f(n) = \rho_{f^{(g)}}(1) \lambda_f(n).
\end{align*}
This implies that $f^{(g)}$ is a Hecke eigenform with
\begin{equation}\label{eqn: eigenvaluesoff(g)intermsoff}
\lambda_{f^{(g)}}(n) = \lambda_f(n)
\end{equation}
for $(n, \N) = 1$.

\subsubsection*{Remark}
For the rest of the paper, we will always take our orthonormal basis of cusp Maass forms $\{u_j\}$ and orthonormal basis of holomorphic forms $B_l(\N)$ to be these Hecke bases defined above.


\begin{lem}\label{lem:heckebasisfouriersumbdd}
Let $u$ be an element of the Atkin-Lehner basis of level $\N$ so $u = f^{(g)}$ for some primitive Hecke form $f$ of level $\M|\N$, and some $g|\frac{\N}{\M}$.  Let $\Psi$ be a smooth function supported in $(a,b)$, where $0 < a < b.$  Write $z = \frac 12 +it$ for real $t$.  Finally, let $X>0$ be a real number. Then
$$\sum_{p}\frac{\sqrt{p} \rho_u(p)\log(p)\Psi(p/X)}{p^z} \ll_{\Psi} |\rho_u(1)|  \log(\N+|t| ) \log^{1 + \epsilon}(X+2)  +|\rho_f(1)|\N^\epsilon.$$
\end{lem}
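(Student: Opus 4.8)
The plan is to reduce the sum over $p$ to a clean character/eigenvalue sum to which Lemma~\ref{lem:CLee3.5} (with $N = 1$) applies directly. First I would recall from \S\ref{subsec:newformoldform} that $u = f^{(g)}$ satisfies the eigenvalue relation \eqref{eqn:fouriercoeffbdd} and \eqref{eqn: eigenvaluesoff(g)intermsoff}: for $(p, \N) = 1$ we have $\sqrt{p}\,\rho_u(p) = \rho_u(1)\lambda_f(p)$, while for the finitely many $p \mid \N$ we only have the crude bound $\sqrt{p}\,\rho_u(p) \ll \N^\epsilon p^{\theta}(\N,p)^{1/2-\theta}|\rho_f(1)| \ll \N^{1/2+\epsilon}|\rho_f(1)|$. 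So I would split
\[
\sum_{p}\frac{\sqrt{p}\,\rho_u(p)\log p\,\Psi(p/X)}{p^z}
= \rho_u(1)\sum_{(p,\N)=1}\frac{\lambda_f(p)\log p\,\Psi(p/X)}{p^z}
+ \sum_{p\mid \N}\frac{\sqrt{p}\,\rho_u(p)\log p\,\Psi(p/X)}{p^z}.
\]
The second sum is over at most $\log\N$ primes, each term bounded by $\N^{1/2+\epsilon}|\rho_f(1)|\log p\,\max|\Psi|$; but since $\Psi$ is supported in $(a,b)$ with $a>0$ fixed, the $p \mid \N$ contribution vanishes unless $p \asymp X$, so there are in fact $O(1)$ such terms and each prime is of size $\asymp X$, giving a bound like $\N^\epsilon |\rho_f(1)| X^{-(\sigma - 1/2)}\log X \max|\Psi|$ — in any case absorbed into the $|\rho_f(1)|\N^\epsilon$ term on the right (one should be slightly careful here about whether $X$ is large or small, but the crude divisor count $\sum_{p\mid\N}1 \ll \log\N$ together with $|\rho_f(1)|^2 \asymp \N^{-1+o(1)}$ makes this harmless).

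For the main term, I would apply the second assertion of Lemma~\ref{lem:CLee3.5} with $N = 1$ and with the test function there taken to be $\Psi(\cdot/X)$. The point is that the hypotheses of Lemma~\ref{lem:CLee3.5} are exactly satisfied by the remark following it: since $\Psi$ is a fixed smooth function compactly supported on $(0,b)$, the rescaling $x \mapsto \Psi(x/X)$ obeys $|\tfrac{d^m}{dx^m}\Psi(x/X)| = X^{-m}|\Psi^{(m)}(x/X)| \ll X^{-m} \ll x^{-m}(x/X)$ on the support, so the constants $A_m$ may be taken to depend only on $\Psi$ and $m$. (One must also note that Lemma~\ref{lem:CLee3.5} is stated for a function compactly supported on $[0,X]$, which is fine since $\Psi(p/X)$ is supported on $p \in (0, bX)$; one either rescales $X \to bX$ harmlessly or notes the remark covers exactly this.) With $N = 1$ the $\log N$ term disappears, and Lemma~\ref{lem:CLee3.5} gives
\[
\sum_{(p,\N)=1}\frac{\lambda_f(p)\log p\,\Psi(p/X)}{p^z} \ll A_3 \log^{1+\epsilon}(X+2)\,\log(\M + k + |t|) \ll \log^{1+\epsilon}(X+2)\,\log(\N + |t|),
\]
using $\M \mid \N$ and absorbing the fixed weight $k$ into the implied constant. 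Multiplying by $|\rho_u(1)|$ gives the first term of the claimed bound.

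The main obstacle — really the only place requiring thought — is the bookkeeping at the ramified primes $p \mid \N$: one has to confirm that the crude Fourier coefficient bound \eqref{eqn:fouriercoeffbdd} at these primes, combined with the normalization $|\rho_f(1)|^2 = (\N t_f)^{o(1)}/\N$, does not overwhelm the target bound $|\rho_f(1)|\N^\epsilon$. Since $\Psi$ is supported away from $0$, only primes $p \asymp X$ contribute, there are $O(X^\epsilon)$ of them with $p \mid \N$, each contributes $\ll \N^\epsilon p^\theta (\N,p)^{1/2-\theta}|\rho_f(1)| p^{-1/2}\log p \ll \N^\epsilon |\rho_f(1)|$ (using $\theta < 1/2$, so $p^{\theta - 1/2} \leq 1$, and $(\N,p)^{1/2-\theta} \ll \N^\epsilon$ after summing the few such primes), which is within budget. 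Everything else is a direct invocation of the lemmas already proved, so once this split is set up the proof is short.
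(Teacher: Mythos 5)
Your proposal follows the same route as the paper: split into $(p,\N)=1$ and $p\mid\N$, use \eqref{eqn: fouriercoeffintermsofeigenvalue} and \eqref{eqn: eigenvaluesoff(g)intermsoff} to reduce the coprime piece to a Hecke-eigenvalue sum, feed that into Lemma~\ref{lem:CLee3.5}, and control the ramified primes via \eqref{eqn:fouriercoeffbdd}. One bookkeeping slip: you invoke Lemma~\ref{lem:CLee3.5} with $N=1$ to kill the $\log N$ term, but the lemma's parameter $N$ is precisely the modulus appearing in the coprimality condition of the sum it bounds, so with $N=1$ the lemma controls $\sum_p$, not $\sum_{(p,\N)=1}$. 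The intended application (as in the paper) is with $N=\N$, which produces an extra term $\ll\log\N\cdot\max|\Psi|$ that is harmlessly absorbed since $\log\N\ll\log(\N+|t|)\log^{1+\epsilon}(X+2)$; alternatively one could bound $\sum_p$ with $N=1$ and separately subtract the $p\mid\N$ Hecke-eigenvalue terms, but as written your step is mislabelled. Your treatment of the $p\mid\N$ piece also takes a small detour through a $\N^{1/2+\epsilon}$ estimate before recovering; the cleaner route, as in the paper, is to use \eqref{eqn:fouriercoeffbdd} directly so that the $p^{1/2}$ cancels the $p^{-1/2}$ from $p^{-z}$, giving each term $\ll\N^\epsilon p^\epsilon|\rho_f(1)|$ without any extraneous powers of $\N$ to argue away.
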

\begin{proof}
We have that
\begin{align*}
\sum_{p}\frac{\sqrt{p} \rho_u(p)\log(p)\Psi(p/X)}{p^z}
&= \sum_{(p, \N) = 1}\frac{\sqrt{p} \rho_u(p)\log(p)\Psi(p/X)}{p^z} + O \left( \sum_{p|\N} \N^\epsilon p^{\epsilon} |\rho_{f}(1)| \right)
\end{align*}by \eqref{eqn:fouriercoeffbdd}. The second quantity on the right-hand side is visibly $\ll |\rho_f(1)| \N^\epsilon$, while the first equals
\begin{align*}
\rho_u(1) \sum_{(p, \N) = 1}\frac{\lambda_f(p)\log(p)\Psi(p/X)}{p^z}
&\ll |\rho_u(1)| \log(\N+|t| ) \log^{1 + \epsilon} (X+2)
\end{align*}
by \eqref{eqn: fouriercoeffintermsofeigenvalue}, \eqref{eqn: eigenvaluesoff(g)intermsoff}, and Lemma~\ref{lem:CLee3.5}. We may apply Lemma~\ref{lem:CLee3.5} because
\begin{equation*}
\frac{d^m}{dx^m} \Psi\left( \frac{x}{X}\right) = \frac{1}{X^m}\Psi^{(m)} \left( \frac{x}{X}\right) \ll \frac{1}{Xx^{m-1}} \max_{a<y<b} |\Psi^{(m)}(y)|
\end{equation*}
for each positive integer $m$.
\end{proof}



\section{Initial Setup for the Proof of Proposition \ref{prop:main}} \label{sec:mainsetup}

Our first task is to trim the sum over prime powers in Lemma~\ref{lem:Explicit} and prove the following.

\begin{lem}\label{lem:trimpsum}
Let the notations be as in Lemma~\ref{lem:Explicit}. We have
\begin{align*}
\frac{1}{\log q}\sum_n \frac{\Lambda(n) c_f(n)}{\sqrt{n}} \widehat \Phi\left(\frac{\log n}{\log q} \right) = \frac{1}{\log q}\sum_{p\nmid q} \frac{\lambda_f(p) \log p}{\sqrt{p}} \widehat \Phi\left(\frac{\log p}{\log q} \right)\\
+\frac{1}{\log q}\sum_{p\nmid q} \frac{\lambda_f(p^2)\log p}{p} \widehat \Phi\left(\frac{\log p^2}{\log q} \right) 
- \frac{1}{4} \Phi (0) +O\left(\frac{\log \log (3q)}{\log q}\right).
\end{align*}

\end{lem}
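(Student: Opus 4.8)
The plan is to make the prime-power sum on the left explicit, split off the $n=p$ and $n=p^2$ terms (which produce the two displayed sums once the negligible primes dividing $q$ are removed), bound the higher prime powers trivially, and extract the main term $-\tfrac14\Phi(0)$ from the $n=p^2$ contribution by an elementary Mertens-type estimate. Nothing here uses GRH.

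Since $\Lambda(n)c_f(n)$ is supported on prime powers, with $\Lambda(p^\ell)=\log p$ and $c_f(p^\ell)=\alpha_f(p)^\ell+\beta_f(p)^\ell$ by \eqref{def:cfn}, I would first write the left-hand side as
\begin{equation*}
\frac1{\log q}\sum_p\log p\sum_{\ell\ge1}\frac{\alpha_f(p)^\ell+\beta_f(p)^\ell}{p^{\ell/2}}\,\widehat\Phi\Big(\frac{\ell\log p}{\log q}\Big)
\end{equation*}
and separate the ranges $\ell=1$, $\ell=2$, $\ell\ge3$. By the Ramanujan bound $|\alpha_f(p)|=|\beta_f(p)|=1$ for $p\nmid q$, and the standard fact that the Euler factor in \eqref{def:Lsf} has degree at most one for $p\mid q$ (so $|\alpha_f(p)|\le p^{-1/2}$ and $\beta_f(p)=0$), we have $|c_f(p^\ell)|\le2$ for all $p,\ell$. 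Hence $\sum_p\sum_{\ell\ge3}\frac{\log p\,|c_f(p^\ell)|}{p^{\ell/2}}\ll\sum_p\frac{\log p}{p^{3/2}}\ll1$, so the $\ell\ge3$ part is $O(1/\log q)$.

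For $\ell=1$ we have $c_f(p)=\alpha_f(p)+\beta_f(p)=\lambda_f(p)$, giving the first displayed sum after restricting to $p\nmid q$; the primes $p\mid q$ contribute $\ll\frac1{\log q}\sum_{p\mid q}\frac{\log p}{p}\ll\frac{\log\log(3q)}{\log q}$, using $|\lambda_f(p)|\le p^{-1/2}$ there and the standard bound $\sum_{p\mid q}\frac{\log p}{p}\ll\log\log(3q)$. For $\ell=2$, matching the two Euler products in \eqref{def:Lsf} gives $\alpha_f(p)+\beta_f(p)=\lambda_f(p)$ and $\alpha_f(p)\beta_f(p)=\chi_0(p)$, so $c_f(p^2)=\lambda_f(p)^2-2\chi_0(p)=\lambda_f(p^2)-\chi_0(p)$ by the Hecke recursion $\lambda_f(p^2)=\lambda_f(p)^2-\chi_0(p)$. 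The $\lambda_f(p^2)$ part yields the second displayed sum after restricting to $p\nmid q$, the primes $p\mid q$ contributing only $\ll\frac1{\log q}\sum_{p\mid q}\frac{\log p}{p^2}\ll\frac1{\log q}$ since $|\lambda_f(p^2)|\le p^{-1}$ there.

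It then remains to evaluate the $-\chi_0(p)$ part, namely $-\frac1{\log q}\sum_{p\nmid q}\frac{\log p}{p}\widehat\Phi\big(\frac{2\log p}{\log q}\big)$. Restoring the terms with $p\mid q$ costs $O(\log\log(3q))$, and partial summation against Mertens' theorem $\sum_{p\le x}\frac{\log p}{p}=\log x+O(1)$ (using that $\widehat\Phi(\frac{2\log x}{\log q})$ is smooth and supported in $x\le q^2$) gives
\begin{equation*}
\sum_p\frac{\log p}{p}\widehat\Phi\Big(\frac{2\log p}{\log q}\Big)=\int_2^\infty\widehat\Phi\Big(\frac{2\log x}{\log q}\Big)\frac{dx}{x}+O(1)=\frac{\log q}{2}\int_0^\infty\widehat\Phi(v)\,dv+O(1)=\frac{\log q}{4}\Phi(0)+O(1),
\end{equation*}
via the substitution $v=2\log x/\log q$, the evenness of $\widehat\Phi$, and Fourier inversion $\int_{-\infty}^\infty\widehat\Phi(v)\,dv=\Phi(0)$. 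Dividing by $\log q$ and restoring the sign yields $-\tfrac14\Phi(0)+O(\log\log(3q)/\log q)$, and combining the above contributions proves the lemma. The computation is routine; the only points needing care are tracking the constant $\tfrac14$ in the Mertens main term --- this is precisely the term producing the orthogonal symmetry factor $\tfrac12\delta_0$ in Theorem~\ref{thm:main} --- and the identity $c_f(p^2)=\lambda_f(p^2)-\chi_0(p)$, which puts the $\ell=2$ contribution into the stated shape.
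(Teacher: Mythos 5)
Your proposal is correct and follows essentially the same route as the paper: the same split into $\ell=1,2,\ge 3$, the same identity $c_f(p^2)=\lambda_f(p^2)-\chi_0(p)$, the same bounds for the $p\mid q$ contributions using $|\lambda_f(p)|\le p^{-1/2}$ there, and the same Mertens/PNT evaluation of the principal-character piece yielding $-\tfrac14\Phi(0)$. No gaps.
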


\begin{proof}
We first split the sum into three parts and write
\begin{align*}
& \frac{1}{\log q}\sum_n \frac{\Lambda(n) c_f(n)}{\sqrt{n}} \widehat \Phi\left(\frac{\log n}{\log q} \right) \\
& = \frac{1}{\log q} \sum_p \frac{c_f(p)\log p}{\sqrt{p}} \widehat \Phi\left(\frac{\log p}{\log q} \right) 
+\frac{1}{\log q} \sum_{p} \frac{c_f(p^2)\log p}{p}\widehat \Phi\left(\frac{\log p^2}{\log q} \right)\\
& \hskip 2in  + \frac{1}{\log q}\sum_p \sum_{\substack{ \nu \geq 3}} \frac{c_f(p^{\nu})\log p}{p^{\nu/2}}\widehat \Phi\left(\frac{\log p^{\nu}}{\log q} \right).
\end{align*}
Since $|c_f(p^{\nu})|\leq 2$ for all $\nu\geq 0$ (see \eqref{def:cfn} and the proof of Lemma~4.1 in \cite{ILS}) and $|\widehat{\Phi}(x)|\leq \int_{-\infty}^{\infty} |\Phi(x)|\,dx$, we have
\begin{align*}
\frac{1}{\log q}\sum_p \sum_{\substack{ \nu \geq 3}} \frac{c_f(p^{\nu})\log p}{p^{\nu/2}}\widehat \Phi\left(\frac{\log p^{\nu}}{\log q} \right) \ll \frac{1}{\log q}.
\end{align*}
Also, \eqref{def:Lsf} and \eqref{def:cfn} imply
\begin{align*}
c_f(p) = \alpha_f(p)+\beta_f(p) = \lambda_f(p),
\end{align*}
while the multiplicativity of $\lambda_f(n)$ (see (2.19) of \cite{ILS}) and \eqref{def:Lsf} imply
\begin{align*}
\lambda_f(p^2) = \alpha_f^2(p)+\alpha_f(p)\beta_f(p)+\beta_f^2(p)= \alpha_f^2(p)+\beta_f^2(p)+\chi_0(p),
\end{align*}
and thus it follows from \eqref{def:cfn} that
\begin{align*}
c_f(p^2) = \alpha_f^2(p)+\beta_f^2(p) = \lambda_f(p^2) - \chi_0(p).
\end{align*}
Therefore
\es{ \label{eqn:afterboundprimepowermorethan3}
& \frac{1}{\log q}\sum_n \frac{\Lambda(n) c_f(n)}{\sqrt{n}} \widehat \Phi\left(\frac{\log n}{\log q} \right) \\
& = \frac{1}{\log q}\sum_p \frac{\lambda_f(p) \log p}{\sqrt{p}} \widehat \Phi\left(\frac{\log p}{\log q} \right) 
+\frac{1}{\log q}\sum_{p} \frac{(\lambda_f(p^2)-\chi_0(p))\log p}{p} \widehat \Phi\left(\frac{\log p^2}{\log q} \right) 
+O\left(\frac{1}{\log q}\right).
}

We next bound the contribution of the terms with $p|q$. If $p|q$, then $\lambda_f^2(p) =p^{-1}$ (see the proof of Lemma~4.1 in \cite{ILS}). Hence
\es{\label{eqn1:p|q}
\frac{1}{\log q}\sum_{p|q} \frac{\lambda_f(p) \log p}{\sqrt{p}} \widehat \Phi\left(\frac{\log p}{\log q} \right) \ll \frac{1}{\log q}\sum_{p|q} \frac{ \log p}{p} \ll \frac{\log \log (3q)}{\log q}
}
and
\es{\label{eqn2:p|q}
\frac{1}{\log q}\sum_{p|q} \frac{(\lambda_f(p^2)-\chi_0(p))\log p}{p} \widehat \Phi\left(\frac{\log p^2}{\log q} \right) = \frac{1}{\log q}\sum_{p|q} \frac{\log p}{p^2} \widehat \Phi\left(\frac{\log p^2}{\log q} \right) \ll \frac{1}{\log q}
}
since if $p|q$ then $\chi_0(p)=0$ and $\lambda_f(p^2)=\lambda_f^2(p)$ by (2.19) of \cite{ILS} (the Hecke relations).

We next simplify the total contribution of $\chi_0(p)$. It is
\es{ \label{eqn:trivchi}
-\frac{1}{\log q}\sum_{p\nmid q} \frac{\chi_0(p)\log p}{p} \widehat \Phi\left(\frac{\log p^2}{\log q} \right) 
& = -\frac{1}{\log q}\sum_{p} \frac{\log p}{p} \widehat \Phi\left(\frac{\log p^2}{\log q} \right) + \frac{1}{\log q}\sum_{p| q} \frac{\log p}{p} \widehat \Phi\left(\frac{\log p^2}{\log q} \right) \\
& = -\frac{1}{\log q}\sum_{p} \frac{\log p}{p} \widehat \Phi\left(\frac{\log p^2}{\log q} \right) + O\left(\frac{\log \log (3q)}{\log q}\right).
}
To estimate this, we use partial summation and the prime number theorem to deduce that
\es{\label{eqn:trivchiwithpnt} -\frac{1}{\log q}\sum_{p} \frac{\log p}{p} \widehat \Phi\left(\frac{\log p^2}{\log q} \right) =  - \frac{1}{4}{\Phi}(0)  + O\left(\frac{\log \log (3q)}{\log q}\right).}

Combining \eqref{eqn:afterboundprimepowermorethan3} -- \eqref{eqn:trivchiwithpnt}, we obtain the desired result.
\end{proof}

We average both sides of the conclusion of Lemma~\ref{lem:trimpsum} with respect to $f$ and then with respect to $q$ to deduce that
\es{ \label{eqn:afterapplyingexplicitformula}
\frac{1}{N(Q)}
& \sum_q \Psi\bfrac{q}{Q} \frac{1}{\log q} \sumh_{f \in \mathcal H_{k}(q)}\sum_n \frac{\Lambda(n) c_f(n)}{\sqrt{n}} \widehat \Phi\left(\frac{\log n}{\log q} \right) \\
= & \frac{1}{N(Q)}\sum_q \Psi\bfrac{q}{Q} \frac{1}{\log q}\sumh_{f \in \mathcal H_{k}(q)}\sum_{p\nmid q} \frac{\lambda_f(p) \log p}{\sqrt{p}} \widehat \Phi\left(\frac{\log p}{\log q} \right)  \\
& + \frac{1}{N(Q)}\sum_q \Psi\bfrac{q}{Q}\frac{1}{\log q} \sumh_{f \in \mathcal H_{k}(q)}\sum_{p\nmid q} \frac{\lambda_f(p^2)\log p}{p} \widehat \Phi\left(\frac{\log p^2}{\log q} \right)  \\
& - \frac{1}{4}{\Phi}(0) +O\left(\frac{\log\log Q}{\log Q}\right). 
}
Next, we consider the second sum in \eqref{eqn:afterapplyingexplicitformula}.  By GRH on $L(s, \sym^2(f))$ and the equation (4.23) in \cite{ILS}, we have

\es{\label{eqn:lambdaAtp^2}\frac{1}{N(Q)}\sum_q \Psi\bfrac{q}{Q}\frac{1}{\log q} \sumh_{f \in \mathcal H_{k}(q)}\sum_{p\nmid q} \frac{\lambda_f(p^2)\log p}{p} \widehat \Phi\left(\frac{\log p^2}{\log q} \right)  \ll \frac{\log \log Q}{\log Q}. }


Most of the work in this paper goes into estimating the first sum on the right-hand side of \eqref{eqn:afterapplyingexplicitformula}. We will prove the following.
\begin{prop} \label{prop:boundSigma_1} Assume GRH. Let $\Phi$ be an even Schwartz function with $\widehat \Phi$ compactly supported in $(-4, 4).$  Moreover, define 
$$ \Sigma_1 := \frac{1}{N(Q)}\sum_q \Psi\bfrac{q}{Q} \frac{1}{\log q}\sumh_{f \in \mathcal H_{k}(q)}\sum_{p\nmid q} \frac{\lambda_f(p) \log p}{\sqrt{p}} \widehat \Phi\left(\frac{\log p}{\log q} \right).$$
Then 
$$ \Sigma_1 \ll \frac{1}{\log Q}. $$
    
\end{prop}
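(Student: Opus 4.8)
The plan is to follow the chain of reductions set up in \S\ref{sec:applypetersson}--\S\ref{sec:ctn}. First I would open the harmonic average over $\mathcal H_k(q)$ using Ng's Petersson formula (Lemma~\ref{lem:PeterssonNg}) with $m=p$, $n=1$; the hypothesis $p\nmid q$ supplies the required coprimality $(mn,q)=1$. Expanding each $\Delta_d(p,e^2)$ by Lemma~\ref{lem:usualPetersson}, the diagonal $\delta(p,e^2)$ vanishes identically since a prime is never a square, so $\Sigma_1$ reduces to a weighted sum of Kloosterman sums $S(p,e^2;cd)$ against $J_{k-1}(4\pi e\sqrt p/(cd))$, with weights $\log(p)p^{-1/2}\widehat\Phi(\log p/\log q)$, $\Psi(q/Q)$ and the arithmetic factors from Lemma~\ref{lem:PeterssonNg}. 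The auxiliary sums there (over $L_1,L_2,d$ with $q=L_1L_2d$, and over $e\mid L_2^{\infty}$) are either short or geometrically convergent. In \S\ref{sec:applypetersson} I would then prune, truncating the $c$-range so as to concentrate on the transition region $cq\asymp\sqrt p$ of the Bessel function (using the decay estimates in Lemma~\ref{jbessel} and the truncated Petersson formula Lemma~\ref{lem:petertruncate} for the tails). Since $\operatorname{supp}\widehat\Phi\subset(-4,4)$ forces $p\ll Q^{4-\delta}$, the surviving range satisfies $c\ll Q^{1-\delta/2}$. A direct appeal to the additive large sieve at this point would only reach support $(-2,2)$, so the extra cancellation must come from treating the Kloosterman sums spectrally.

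Next, with $c$ fixed I would push the sum over the Kloosterman modulus inside and apply Kuznetsov's formula (Lemma~\ref{lem:kuznetsov}) at level $c$: the modulus runs over multiples of $c$, and after extracting the oscillatory factor $\e{\pm\xi/(2\pi)}$ from the large-argument expansion of $J_{k-1}$ in Lemma~\ref{jbessel}, the test function has exactly the shape analysed in Lemma~\ref{lem:boundforhu} and is supported on $\xi\asymp\sqrt p/(cQ)$. This turns the Kloosterman sums into a spectral sum over holomorphic forms, Maass forms and Eisenstein series of level $c$; the decisive point is that the conductor has dropped from $q\asymp Q$ down to $c\ll Q^{1-\delta/2}$. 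Lemma~\ref{lem:boundforhu} supplies the decay in the spectral parameters $\kappa_j$, $t$ and the weight $\ell$ that keeps the spectral sums convergent.

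For the discrete and holomorphic contributions (the content of Proposition~\ref{prop:DisHol}, proved in \S\ref{sec:dispart}) I would rewrite the orthonormal bases in the Atkin--Lehner/Hecke bases of \S\ref{subsec:newformoldform}, so that each element is some $f^{(g)}$ with $\lambda_{f^{(g)}}(p)=\lambda_f(p)$ for $(p,c)=1$ and Fourier coefficients bounded by \eqref{eqn:fouriercoeffbdd}. The $p$-sum then factors as $\rho_{f^{(g)}}(1)$ times $\sum_{(p,c)=1}\lambda_f(p)\log(p)p^{-1/2}\widehat\Phi(\cdots)$ against a smooth weight in $p$, and GRH for $L(s,f)$ bounds this by $\ll|\rho_{f^{(g)}}(1)|\log(c+k+|\kappa_j|)\log^{1+\epsilon}Q+|\rho_f(1)|c^\epsilon$ via Lemma~\ref{lem:CLee3.5} (in the packaged form of Lemma~\ref{lem:heckebasisfouriersumbdd}). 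Summing over the spectral data with the spectral large sieve and the decay from Lemma~\ref{lem:boundforhu}, then over $c\ll Q^{1-\delta/2}$ and the dyadic ranges of $p$, and finally dividing by $N(Q)\asymp Q\widehat\Psi(0)$ and by $\log q$, yields a contribution $\ll 1/\log Q$ with room to spare.

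The main obstacle is the continuous spectrum (Proposition~\ref{prop:Ctn}, \S\ref{sec:ctn}). Here I would substitute the explicit Kiral--Young formulas \cite{KY} for the Eisenstein Fourier coefficients $\varphi_{\mathfrak{c}}(m,t)$ of $\Gamma_0(c)$; morally these are divisor-type sums twisted by a Dirichlet character $\chi$ modulo a divisor of $c$. For $\chi$ non-principal, GRH for $L(s,\chi)$ (again Lemma~\ref{lem:CLee3.5}) gives square-root cancellation in the $p$-sum and the argument runs parallel to the Maass case. For $\chi=\chi_0$ principal, however, $\sum_{p\le P}\log(p)p^{-1/2-2it}$ has no such cancellation: it is essentially $P^{1/2-2it}\widetilde{V}_1(1/2-2it)$ for a rapidly decaying $\widetilde{V}_1$, and $P$ can be as large as $Q^{4-\delta}$, so this term cannot be estimated pointwise in $t$. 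The crux of the whole argument is to keep the spectral integral $\int_{-\infty}^{\infty}(\cdots)\,dt$ intact and exploit the oscillation of $P^{-2it}$ against the remaining $t$-dependence (a second copy of $\varphi_{\mathfrak{c}}(e^2,t)$, the Bessel transform, and $1/\cosh(\pi t)$), together with the averaging over cusps $\mathfrak{c}$ and over $p$, to recover the saving. Once Propositions~\ref{prop:DisHol} and~\ref{prop:Ctn} are in place, assembling the two contributions and collecting the normalizations gives $\Sigma_1\ll 1/\log Q$.
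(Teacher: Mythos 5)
Your high-level roadmap matches the paper's actual proof closely: open $\Delta^*_q(p,1)$ with Ng's formula, observe that the diagonal $\delta(e^2,p)$ dies because $p$ is prime, prune the arithmetic parameters $L_1L_2$ and $e$ (and, in the paper, \emph{add back} the $p\mid q$ terms rather than carrying $p\nmid q$ through — this is the "filling in the sum over $p$" subsection, which makes the later Kuznetsov application cleaner), then separate variables, apply Kuznetsov at level $cL_1rd$ (not quite level $c$; the $n$-sum over the Kloosterman modulus $\mathfrak m=cL_1rdn$ runs over multiples of $cL_1rd$), and estimate the holomorphic/Maass contribution via the Atkin--Lehner/Hecke bases, Lemma~\ref{lem:CLee3.5}, and the spectral large sieve, exactly as in \S\ref{sec:dispart}. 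Also note the paper does not explicitly truncate $c$; it lets the $\min\{X^{k-1},X^{-1/2}\}$ decay in $X=4\pi L_1L_2\sqrt{Pe^2}/(cQ)$ from Lemma~\ref{lem:boundforhu} handle the $c$-sum, with $c\ll Q^{1-\delta/2}$ being only the heuristic transition range from the outline.

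The genuine gap is in your treatment of the principal-character Eisenstein piece, which is the crux and the one place where a new idea is needed. You correctly note that the $p$-sum produces $P^{1/2-it}\widetilde V_1(1/2-it)$ with $P$ up to $Q^{4-\delta}$, so pointwise estimation in $t$ fails. But your proposed rescue — "exploit the oscillation of $P^{-2it}$ against the remaining $t$-dependence, together with the averaging over cusps $\mathfrak c$ and over $p$" — does not identify the mechanism and is partly misattributed: the saving comes neither from the cusp sum nor from $p$ (the $p$-sum is already done at that point). What the paper actually does (Lemmas~\ref{lem:boundh+} and~\ref{lem:intovertforPrinc}) is a contour shift: set $s=\tfrac12-it$, observe the integrand is analytic, and move the line from $\operatorname{Re}(s)=\tfrac12$ to $\operatorname{Re}(s)=\tfrac14+\epsilon$. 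This replaces $P^{1/2}$ by $P^{1/4+\epsilon}$, which is $\ll Q^{1-\delta/4+\epsilon}$ and hence acceptable. Making the shift legal requires two nontrivial inputs you do not mention: a bound for the Kuznetsov transform $h_+(\gamma+i\delta)$ \emph{off the real axis} (Lemma~\ref{lem:boundh+}, proved by re-examining the Bessel integral representation and extracting a factor $X^{-2|\delta|}$), and a GRH bound $L(\tfrac12+\epsilon+it,\chi^2\chi_0)^{-1}\ll (q_0(1+|t|))^\epsilon$ for the $L$-value in the denominator at height $\operatorname{Re}=\tfrac12+\epsilon$. Without these, "keeping the spectral integral intact and exploiting oscillation" does not by itself yield the needed power saving in $P$, so as written the proposal would not close in the principal-character case.
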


\subsection{Proof of Proposition \ref{prop:main} and Theorem \ref{thm:main}} \label{sec:proofofMainTheorem}
From \eqref{eqn:afterapplyingexplicitformula}, \eqref{eqn:lambdaAtp^2} and Proposition~\ref{prop:boundSigma_1}, we have
\est{
\frac{1}{N(Q)}  \sum_q \Psi\bfrac{q}{Q} \frac{1}{\log q} \sumh_{f \in \mathcal H_{k}(q)} \sum_n \frac{\Lambda(n)c_f(n)}{\sqrt{n}} \widehat \Phi\left( \frac{\log n}{\log q}\right) =  -\frac{\Phi(0)}{4} + O\left( \frac{\log \log Q}{\log Q}\right).  
}
Taking $Q \rightarrow \infty$, we arrive at Proposition~\ref{prop:main}. 

To prove Theorem~\ref{thm:main}, we use the explicit formula from Lemma~\ref{lem:Explicit}, \eqref{eqn:asympN(q)}, and Proposition~\ref{prop:main} to deduce that
\begin{align*}
    \lim_{Q \rightarrow \infty} \mathscr {OL} (Q)   &= - \lim_{Q \rightarrow \infty}\frac{1}{N(Q)}\sum_{q} \Psi\bfrac{q}{Q}\frac{1}{\log q}  \sumh_{f \in \mathcal H_{k}(q)} \sum_{n=1}^{\infty}\frac{\Lambda(n)[c_f(n) + c_{\bar f}(n)]}{\sqrt{n}} \widehat\Phi\left(\frac{\log n}{\log q}\right) \\
    &+ \lim_{Q \rightarrow \infty} \frac{1}{N(Q)}\sum_{q} \Psi\bfrac{q}{Q}  \sumh_{f \in \mathcal H_{k}(q)} \left[\int_{-\infty}^{\infty} \Phi(x) \> dx +O_k\left(\frac{1}{\log q} \right) \right], \\
    &= 2\frac{ \Phi(0)}{4} + \int_{-\infty}^{\infty} \Phi(x) \> dx = \int_{-\infty}^{\infty} \Phi(x) \left( 1 + \frac{\delta_0}{2}\right) \> dx.
\end{align*}
This proves Theorem~\ref{thm:main}.

\section{Applying Petersson's formula}\label{sec:applypetersson}

The rest of the paper is devoted to proving Proposition~\ref{prop:boundSigma_1}. Applying Petersson's formula, Lemma~\ref{lem:PeterssonNg}, to $\Sigma_{1}$ defined in Proposition~\ref{prop:boundSigma_1}, we deduce that

\begin{align} 
\Sigma_1
& = \frac{1}{N(Q)}\sum_q \Psi\left(\frac{q}{Q}\right)   \frac{1}{\log q}\sum_{p\nmid q} \frac{ \log p}{\sqrt{p}} \widehat \Phi\left(\frac{\log p}{\log q} \right) \notag \\
& \ \ \ \ \ \times \sum_{\substack{q=L_1L_2d \\ L_1|q_1 \\ L_2|q_2}} \frac{\mu(L_1L_2)}{L_1L_2}  \prod_{\substack{p_1|L_1 \\ p_1^2 \nmid d}}  \left( 1-\frac{1}{p_1^2} \right)^{-1} \sum_{e|L_2^{\infty} }\frac{\Delta_d(e^2,p)}{e}, \label{eqn:Sigmaafterhecke}
\end{align}
where $p_1$ runs over the primes, and where $q=q_1q_2$, with $q_1$ the largest factor of $q$ satisfying $p_1|q_1 \Leftrightarrow p_1^2|q$ for all primes $p_1$.

\subsection{Truncation in $L_1L_2$}

We first bound the contribution of the range $L_1 L_2 \ge \mathcal{L}_0$ (equivalently $d \ll \frac{Q}{\mathcal L_0}$) for 
\begin{equation} \label{def:L_0}
\mathcal{L}_0  = (\log Q)^{6}.     \end{equation}

By \eqref{eqn:Sigmaafterhecke}, we need to bound
\begin{align*}
&\ll \frac{1}{Q \log Q} \sumthree_{\substack{L_1, L_2, d \\ L_1L_2 \ge \mathcal{L}_0}} \frac{1}{L_1L_2} \Psi\bfrac{L_1L_2d}{Q}  
\sum_{e|L_2^{\infty} } \frac{\tau(e^2)}{e}  \left| \sumh_{f\in B_k(d)} \sum_{p \nmid q} \frac{ \log p \lambda_f(p)} {\sqrt{p}} \widehat \Phi\left(\frac{\log p}{\log q} \right)\right| \notag \\
&\ll \frac{1}{Q \log Q} \sumthree_{\substack{L_1, L_2, d \\ L_1L_2 \ge \mathcal{L}_0}} \frac{1}{L_1L_2} \Psi\bfrac{L_1L_2d}{Q}  
\sum_{e|L_2^{\infty} } \frac{\tau(e^2)}{e} \log^{2 + \epsilon} q,
\end{align*}
upon applying Lemma~\ref{lem:CLee3.5} to the sum over $p$.  We remind the reader that we have used that $\lambda_f(p) = \lambda_g(p)$ for some Hecke newform $g$ of level dividing $d$.  The quantity above is bounded by
\begin{align}\label{eqn:truncatedbdd}
\frac{(\log Q)^{1 + \epsilon}}{Q} \sumthree_{\substack{L_1, L_2, d \\ d \ll Q/\mathcal{L}_0}} \frac{\tau(L_2)}{L_1L_2} \Psi\bfrac{L_1L_2d}{Q} \ll \frac{(\log Q)^{5}}{\mathcal{L}_0} \ll \frac{1}{\log Q}.
\end{align}

\subsection{Truncation in $e$}
We now bound the contribution of the range $e \ge E$ for 
\begin{equation} \label{def:E}
    E = (\log Q)^3.
\end{equation}

Again, we need to bound
\begin{align}\label{eqn:truncatee1}
&\frac{1}{Q \log Q}  \sumthree_{\substack{L_1, L_2, d \\ L_1L_2 \ll \mathcal{L}_0}} \frac{1}{L_1L_2} \Psi\bfrac{L_1L_2d}{Q}  
\sum_{\substack{e|L_2^{\infty} \\ e\ge E }}\frac{\tau(e)^2}{e}  \left| \sumh_{f\in B_k(d)} \sum_{p \nmid q} \frac{ \log p \lambda_f(p)} {\sqrt{p}} \widehat \Phi\left(\frac{\log p}{\log q} \right)\right| \notag \\
&\ll \frac{1}{Q \log Q}  \sumthree_{\substack{L_1, L_2, d \\ L_1L_2 \ll \mathcal{L}_0}} \frac{1}{L_1L_2} \Psi\bfrac{L_1L_2d}{Q}  
\sum_{\substack{e|L_2^{\infty} \\ e\ge E }} \frac{1}{e^{1-\epsilon}} (\log q)^{2 + \epsilon} ,
\end{align}
upon applying Lemma~\ref{lem:CLee3.5} to the sum over $p$.

We note that
\begin{align*}
\sum_{\substack{e|L_2^{\infty} \\ e\ge E }} \frac{1}{e^{1-\epsilon}}
\ll  \frac{1}{E^{1-2\epsilon}} \sum_{\substack{e|L_2^{\infty} }} \frac{1}{e^\epsilon}
\ll \frac{\tau(L_2)}{E^{1-2\epsilon}}, 
\end{align*}and so the quantity in \eqref{eqn:truncatee1} is bounded by

\begin{align}\label{eqn:truncateebdd}
\ll \frac{(\log Q)^{1 + \epsilon}}{Q E^{1-2\epsilon}}  \sumthree_{\substack{L_1, L_2, d \\ L_1L_2 \ll \mathcal{L}_0}} \frac{\tau(L_2)}{L_1L_2} \Psi\bfrac{L_1L_2d}{Q}  
\ll \frac{(\log Q)^{1 + \epsilon}}{E^{1-2\epsilon}} \ll \frac{1}{\log Q}.
\end{align}

\subsection{Filling in the sum over $p$}
In order to facilitate our application of Kuznetsov's formula later, we first add the terms corresponding to $p|q$. We show that this changes the value of $\Sigma_1$ by an amount that is not too large. To do this, we need to bound the sum
\begin{align*}
\mathcal{E} :=
& \frac{1}{N(Q)}\sum_q \Psi\bfrac{q}{Q}\frac{1}{\log q}\sum_{p|q} \frac{ \log p}{\sqrt{p}} \left|\widehat \Phi\left(\frac{\log p}{\log q} \right) \right|\\
& \times \sum_{\substack{q=L_1L_2d \\ L_1|q_1 \\ L_2|q_2}} \frac{1}{L_1L_2}  \prod_{\substack{p_1|L_1 \\ p_1^2 \nmid d}}  \left( 1-\frac{1}{p_1^2} \right)^{-1} \sum_{e|L_2^{\infty} }\frac{|\Delta_d(e^2,p)|}{e},
\end{align*}where we have ignored the conditions $e\le E$ and $L_1L_2 \le \mathcal{L}_0$ by positivity.  Since $(e, d)  =1$, Lemma~\ref{lem:petertruncate} implies that
\begin{align*}
\Delta_d(e^2,p)
& \ll_k  \frac{(ped)^{\epsilon}}{ d((p,d)+(e^2, d) )^{1/2}} \left( \frac{pe^2}{\sqrt{pe^2} +d} \right)^{1/2} \\
& \ll  \frac{(ped)^{\epsilon} }{ d}(pe^2)^{1/4} .
\end{align*}
Since $\Psi$ is non-negative,
\begin{align*}
\mathcal{E} \ll
& \frac{1}{N(Q)}\sum_q \Psi\bfrac{q}{Q} \frac{1}{\log q}\sum_{p|q} \frac{ \log p}{\sqrt{p}} \left| \widehat \Phi\left(\frac{\log p}{\log q} \right)\right| \\
& \times \sum_{\substack{q=L_1L_2d \\ L_1|q_1 \\ L_2|q_2}} \frac{1}{L_1L_2}  \prod_{\substack{p_1|L_1 \\ p_1^2 \nmid d}}  \left( 1-\frac{1}{p_1^2} \right)^{-1} \sum_{e|L_2^{\infty} }\frac{(ped)^{\epsilon} p^{1/4} e^{1/2} }{de}.
\end{align*}
Now
\begin{align*}
\sum_{e|L_2^{\infty} }e^{-\frac{1}{2}+\epsilon} = \prod_{p_1|L_2} \left(1 - p_1^{-\frac{1}{2}+\epsilon}\right)^{-1}
\ll L_2^{\epsilon}.
\end{align*}
Since $(pdL_2)^{\epsilon}\ll q^{\epsilon}$, we have
\begin{align*}
\mathcal{E} \ll
& \frac{1}{N(Q)}\sum_q \Psi\bfrac{q}{Q} \frac{1}{\log q}\sum_{p|q} \frac{ \log p}{\sqrt{p}} \left| \widehat \Phi\left(\frac{\log p}{\log q} \right)\right| \\
& \times \sum_{\substack{q=L_1L_2d \\ L_1|q_1 \\ L_2|q_2}} \frac{1}{L_1L_2}  \prod_{\substack{p_1|L_1 \\ p_1^2 \nmid d}}  \left( 1-\frac{1}{p_1^2} \right)^{-1}  p^{\frac{1}{4}+\epsilon} d^{-1+\epsilon} L_2^{\epsilon} \\
&\ll   \frac{Q^\epsilon}{N(Q)}\sum_q \Psi\bfrac{q}{Q} \frac{1}{\log q}\sum_{p|q} \frac{ \log p}{p^{1/4}}  \sum_{\substack{q=L_1L_2d \\ L_1|q_1 \\ L_2|q_2}} \frac{1}{L_1L_2 d} \\
&\ll  \frac{Q^\epsilon}{N(Q)}\sum_q \Psi\bfrac{q}{Q} \frac{1}{\log q} (\log q) \frac{q^{\epsilon}}{q} \ll \frac{1}{Q^{1 - \epsilon}}
\end{align*}
where we used $N(Q) \asymp Q$ (see \eqref{eqn:asympN(q)}). It follows from this, \eqref{eqn:afterapplyingng}, \eqref{eqn:truncatedbdd} and \eqref{eqn:truncateebdd} that
\begin{align} \label{eqn:Sigma_1toSigma_2}
\Sigma_1 = \Sigma_{2} + O\left( \frac{1}{\log Q} \right),
\end{align} where
\es{ \label{eqn:afterapplyingpetersson}
\Sigma_{2}
& = \frac{1}{N(Q)}\sum_q \Psi\left(\frac{q}{Q}\right)   \frac{1}{\log q}\sum_{p} \frac{ \log p}{\sqrt{p}} \widehat \Phi\left(\frac{\log p}{\log q} \right) \\
& \ \ \ \ \ \times \sum_{\substack{q=L_1L_2d \\ L_1|q_1 \\ L_2|q_2 \\ L_1L_2 < \mathcal{L}_0}} \frac{\mu(L_1L_2)}{L_1L_2}  \prod_{\substack{p_1|L_1 \\ p_1^2 \nmid d}}  \left( 1-\frac{1}{p_1^2} \right)^{-1} \sum_{\substack{e|L_2^{\infty} \\ e < E}}\frac{1}{e} \\
& \ \ \ \ \ \times  (2\pi i)^{-k} \sum_{c\geq 1} \frac{S(e^2,p;cd)}{cd}J_{k-1} \left( \frac{4\pi \sqrt{pe^2}}{cd}\right).  
}

\section{Setup to bound $\Sigma_2$ and the proof of Proposition~\ref{prop:boundSigma_1}} \label{sec:applykutz}

By the Remark below Lemma~\ref{lem:PeterssonNg}, we may replace the conditions $L_1|q_1$ and $L_2|q_2$ in \eqref{eqn:afterapplyingpetersson} by $L_1|d$ and $(L_2,d)=1$. Therefore 
\begin{align*} 
\Sigma_{2}
& = \frac{1}{N(Q)}\sum_q \Psi\left(\frac{q}{Q}\right)   \frac{1}{\log q}\sum_{p} \frac{ \log p}{\sqrt{p}} \widehat \Phi\left(\frac{\log p}{\log q} \right)\\
& \ \ \ \ \ \times \sum_{\substack{q=L_1L_2d \\ L_1|d \\ (L_2,d)=1 \\ L_1L_2 < \mathcal L_0}} \frac{\mu(L_1L_2)}{L_1L_2}  \prod_{\substack{p_1|L_1 \\ p_1^2 \nmid d}}  \left( 1-\frac{1}{p_1^2} \right)^{-1} \sum_{\substack{e|L_2^{\infty} \\ e < E}}\frac{1}{e} \\
& \ \ \ \ \ \times  (2\pi i)^{-k} \sum_{c\geq 1} \frac{S(e^2,p;cd)}{cd}J_{k-1} \left( \frac{4\pi \sqrt{pe^2}}{cd}\right).
\end{align*}
Since $L_1|d$, we write $d = L_1 m$ and have
\begin{align*}
\prod_{\substack{p_1|L_1 \\ p_1^2 \nmid d}}  \left( 1-\frac{1}{p_1^2} \right)^{-1}
& = \prod_{ p_1|L_1  }  \left( 1-\frac{1}{p_1^2} \right)^{-1}  \prod_{\substack{p_1|L_1 \\ p_1  | m}}  \left( 1-\frac{1}{p_1^2} \right) \\
& = \Bigg\{\prod_{ p_1|L_1  }  \left( 1-\frac{1}{p_1^2} \right)^{-1}\Bigg\} \Bigg\{ \sum_{r|(L_1,m)}\frac{\mu(r)}{r^2}\Bigg\}.
\end{align*}
Using this and substituting  $q=L_1L_2d = L_1^2 L_2m$, where $(m, L_2) = 1$, in the above expression for $\Sigma_2$ gives 
\begin{align*}
\Sigma_{2}
= & \frac{(2\pi i)^{-k}}{N(Q)}\sum_{\substack{L_1,L_2  \\ L_1L_2 < \mathcal L_0}} \frac{\mu(L_1L_2)}{L_1L_2}  \prod_{ p_1|L_1  }  \left( 1-\frac{1}{p^2} \right)^{-1} \sum_{p } \frac{ \log p}{\sqrt{p}}   \sum_{\substack{e|L_2^{\infty} \\ e < E} }\frac{1}{e} \sum_{c\geq 1}  \\
& \times \sum_{\substack{m  \\ (m,L_2)=1}} \sum_{r|(L_1,m)}\frac{\mu(r)}{r^2}\frac{S(e^2,p;cL_1m)}{cL_1m} \Psi\left(\frac{L_1^2L_2m}{Q}\right) \\
& \times \frac{1}{\log (L_1^2L_2m)}\widehat \Phi\left(\frac{\log p}{\log (L_1^2L_2m)} \right)J_{k-1} \left( \frac{4\pi \sqrt{pe^2}}{cL_1m}\right).
\end{align*}
Since $r|m$, we may substitute $m=rn$ to write
\begin{align*}
\Sigma_{2}
= & \frac{(2\pi i)^{-k}}{N(Q)}\sum_{\substack{L_1,L_2  \\ L_1L_2 < \mathcal L_0}} \frac{\mu(L_1L_2)}{L_1L_2}  \prod_{ p_1|L_1  }  \left( 1-\frac{1}{p^2} \right)^{-1} \sum_{r|L_1}\frac{\mu(r)}{r^2} \sum_{p } \frac{ \log p}{\sqrt{p}}    \\
& \times \sum_{\substack{e|L_2^{\infty} \\ e < E} }\frac{1}{e} \sum_{c\geq 1}  \sum_{\substack{n  \\ (n,L_2)=1}} \frac{S(e^2,p;cL_1rn)}{cL_1rn} \Psi\left(\frac{L_1^2L_2rn}{Q}\right) \\
& \times \frac{1}{\log (L_1^2L_2rn)}\widehat \Phi\left(\frac{\log p}{\log (L_1^2L_2rn)} \right)J_{k-1} \left( \frac{4\pi \sqrt{pe^2}}{cL_1rn}\right).
\end{align*}
Next, we use M\"{o}bius inversion to detect the condition $(n,L_2)=1$ and deduce that
\es{ \label{eqn:afterapplyingpetersson2}
\Sigma_{2}
= & \frac{(2\pi i)^{-k}}{N(Q)}\sum_{\substack{L_1,L_2 \\ L_1L_2 < \mathcal L_0}} \frac{\mu(L_1L_2)}{L_1L_2}  \prod_{ p_1|L_1  }  \left( 1-\frac{1}{p^2} \right)^{-1} \sum_{r|L_1}\frac{\mu(r)}{r^2} \sum_{d|L_2}\mu(d) \sum_{p } \frac{ \log p}{\sqrt{p}}     \\
& \times \sum_{\substack{e|L_2^{\infty} \\ e < E} }\frac{1}{e} \sum_{c\geq 1}  \sum_{n}  \frac{S(e^2,p;cL_1rdn)}{cL_1rdn} \Psi\left(\frac{L_1^2L_2rdn}{Q}\right)  \\
& \times \frac{1}{\log (L_1^2L_2rdn)}\widehat \Phi\left(\frac{\log p}{\log (L_1^2L_2rdn)} \right)J_{k-1} \left( \frac{4\pi \sqrt{pe^2}}{cL_1rdn}\right). 
}

\subsection{Applying Kutznetsov's formula}
The $n$-sum in \eqref{eqn:afterapplyingpetersson2} is
\begin{align*}
\sum_{n}  \frac{S(e^2,p;cL_1rdn)}{cL_1rdn} \Psi\left(\frac{L_1^2L_2rdn}{Q}\right) \frac{1}{\log (L_1^2L_2rdn)}\\
\widehat \Phi\left(\frac{\log p}{\log (L_1^2L_2rdn)} \right)J_{k-1} \left( \frac{4\pi \sqrt{pe^2}}{cL_1rdn}\right).
\end{align*}
We let $\fm= cL_1rdn$ and write this sum as
\begin{align*}
& \sum_{\substack{\fm \geq 1 \\ \fm\equiv 0 \bmod{cL_1rd}}} \frac{S(e^2,p;\fm)}{\fm} \Psi\left(\frac{L_1L_2\fm}{cQ}\right) \frac{1}{\log (L_1L_2\fm/c)}\\
& \hspace{1in} \times \widehat \Phi\left(\frac{\log p}{\log (L_1L_2\fm/c)} \right)J_{k-1} \left( \frac{4\pi \sqrt{pe^2}}{\fm}\right) \\
& = \sum_{\substack{\fm \geq 1 \\ \fm\equiv 0 \bmod{cL_1rd}}} \frac{S(e^2,p;\fm)}{\fm} f\left( 4\pi \frac{\sqrt{pe^2}}{\fm} \right),
\end{align*}
where $f(\cdot) = f_{c,L_1,L_2,Q,p,e,k}(\cdot)$ is defined by
\begin{align*}
f\left( 4\pi \frac{\sqrt{pe^2}}{\fm} \right) : = \Psi\left(\frac{L_1L_2\fm}{cQ}\right) \frac{1}{\log (L_1L_2\fm/c)} \widehat \Phi\left(\frac{\log p}{\log (L_1L_2\fm/c)} \right)J_{k-1} \left( \frac{4\pi \sqrt{pe^2}}{\fm}\right), 
\end{align*}
i.e., 
\begin{align}\label{eqn:fxi}
f(\xi) : = \Psi\left(\frac{4\pi L_1L_2\sqrt{pe^2}}{cQ\xi}\right) \frac{1}{\log \left(\frac{4\pi L_1L_2\sqrt{pe^2}}{c\xi}\right)} \widehat \Phi\left(\frac{\log p}{\log \left(\frac{4\pi L_1L_2\sqrt{pe^2}}{c\xi}\right)} \right)J_{k-1}(\xi). 
\end{align}

We introduce a smooth partition of unity for the sum over $p$, and write
$$\sum_p \textup{...} = \sumd_P \sum_p V\bfrac{p}{P}\textup{...},
$$where $\sumd_P$ denotes a sum over $P = 2^j$ for $j\ge 0$, and $V$ is a smooth function compactly supported on $[1/2, 3]$ satisfying $\sumd_P V\bfrac{p}{P} = 1$ for all $p \geq 1$. 
When studying the contribution of $\sum_p V\bfrac{p}{P}...$, we need to separate the dependence of $f(\xi)$ on $p$.  To this end, we let

\begin{equation} \label{eqn: Xdefinition}
X := \frac{4\pi L_1 L_2 \sqrt{Pe^2}}{cQ},
\end{equation}and

\begin{align}\label{eqn:Hdef}
H(\xi, \lambda) := \Psi\left(\frac{X}{\xi} \sqrt{\lambda}\right) \frac{\log Q}{\log \left(\frac{X}{\xi} \sqrt{\lambda} Q\right)} \widehat \Phi\left(\frac{\log \lambda + \log P}{\log \left( \frac{X}{\xi} \sqrt{\lambda} Q \right)} \right).
\end{align}

We let 
$$\widehat{H}(u, v) = \int_{-\infty}^\infty \int_{-\infty}^\infty H(\xi, \lambda) \e{-\xi u - \lambda v} d\xi d\lambda$$ be the usual Fourier transform of $H$.  For reference later, we record the following bounds on $\hat H$.

\begin{lem}\label{lem:hatHbdd}
With notation as above, we have that for any $A>0$, 
\begin{equation*}
\widehat{H}(u, v) \ll_A \bfrac{1}{(1+|u|)(1+|v|)}^A.
\end{equation*}
\end{lem}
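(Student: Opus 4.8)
The plan is to prove Lemma~\ref{lem:hatHbdd} by the standard integration-by-parts argument for Fourier transforms of smooth bump functions: once the support and the partial derivatives of $H$ are controlled uniformly in all the parameters $Q,P,L_1,L_2,e,c$, the transform $\widehat H$ automatically decays faster than any polynomial in each variable.

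First I would record the support. The factor $\Psi\pr{\tfrac{X}{\xi}\sqrt{\lambda}}$ vanishes unless $\tfrac{X}{\xi}\sqrt{\lambda}$ lies in a fixed compact subinterval of $(0,\infty)$, so on the support $\xi\asymp X\sqrt{\lambda}$; and since $H$ is only ever evaluated at $\lambda=p/P$ with $p$ in the support of $V$, one has $\lambda\asymp1$ throughout, hence $\xi\asymp X$ and $\log\pr{\tfrac{X}{\xi}\sqrt{\lambda}\,Q}=\log Q+O(1)\asymp\log Q$. Thus the prefactor $\tfrac{\log Q}{\log(\,\cdot\,)}$ is of size $\asymp1$, and wherever the $\widehat{\Phi}$-factor is nonzero its numerator $\log\lambda+\log P$ is at most a fixed multiple of its denominator, so $\log\lambda+\log P\ll\log Q$ there. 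Rescaling $\xi=X\eta$, the function $\widetilde{H}(\eta,\lambda):=H(X\eta,\lambda)$ is then supported in a box independent of all parameters.

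Next I would show $\partial_\eta^{a}\partial_\lambda^{b}\widetilde{H}\ll_{a,b}1$ uniformly. Recalling that $\widehat{\Phi}$ is smooth (being the Fourier transform of the Schwartz function $\Phi$) and applying Leibniz' rule, a derivative landing on $\Psi$ or on $\widehat{\Phi}$ produces a bounded factor --- for the $\widehat{\Phi}$-term one uses that the quantity $\log\lambda+\log P$ arising in the numerator of the differentiated argument is $O(\log Q)$, which is cancelled by the power of $\log(\,\cdot\,)\asymp\log Q$ appearing in the denominator --- whereas a derivative landing on a logarithmic factor gains an extra $O(1/\log Q)$; higher derivatives follow by induction. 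With these bounds in hand, integrating by parts $A$ times in $\eta$ and $A$ times in $\lambda$ gives $\widehat{\widetilde H}(w,v)\ll_A\bigl((1+|w|)(1+|v|)\bigr)^{-A}$, and combining with the trivial bound $\widehat{\widetilde H}\ll\lVert\widetilde H\rVert_{L^1}\ll1$ for $|w|,|v|\ll1$ finishes the estimate through the identity $\widehat{H}(u,v)=X\,\widehat{\widetilde H}(Xu,v)$.

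The only real obstacle is the derivative estimate in the previous paragraph: one must verify carefully that the $\xi$- and $\lambda$-dependent denominators buried inside the arguments of $\Psi$ and $\widehat{\Phi}$ never cause a loss under repeated differentiation. This is exactly where the design of $H$ enters --- the restriction $\lambda\asymp1$ from the partition of unity in $p$, the compact support of $\widehat{\Phi}$ strictly inside $(-4,4)$ (which is what forces $\log\lambda+\log P\ll\log Q$ on the relevant set), and the built-in normalization by $\log Q$ that keeps both the prefactor and the argument of $\widehat{\Phi}$ of size $\asymp1$ on the support.
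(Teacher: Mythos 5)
Your strategy is the same as the paper's: restrict to the support of $\Psi$ (so $\frac{X}{\xi}\sqrt{\lambda}\asymp 1$), compute derivative bounds for $H$ via Leibniz, and integrate by parts repeatedly. The rescaling $\xi = X\eta$ to the $X$-independent $\widetilde H(\eta,\lambda)=H(X\eta,\lambda)$ is a useful clarification absent from the paper's terse proof, and it also makes explicit why the Fourier transform can legitimately be taken outside the $c$-sum in \eqref{eqn:Sigma2afterFouriertransform} even though $X$ depends on $c$.

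Two places need tightening, though. First, you assert that $\lambda\asymp1$ on the support of $H$ because ``$H$ is only ever evaluated at $\lambda=p/P$''; but for a decay estimate on $\widehat H(u,v)=\int\int H\,\ex(-\xi u-\lambda v)\,d\xi\,d\lambda$ you need $H$ \emph{itself} to be supported in $\lambda\asymp1$, not just evaluated there. From \eqref{eqn:Hdef}, the only intrinsic constraint on $\lambda$ is $|\log\lambda+\log P|\ll\log Q$, i.e.\ $\lambda$ ranges over roughly $(Q^{-4}/P,Q^4/P)$; the cutoff $V(p/P)$ must be absorbed into $H$ (or a bump in $\lambda$ inserted) to close this, which the paper silently does as well. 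Second, the concluding identity $\widehat H(u,v)=X\widehat{\widetilde H}(Xu,v)$ does \emph{not} immediately give the stated bound: it gives $\widehat H(u,v)\ll_A X\big((1+X|u|)(1+|v|)\big)^{-A}$, and for $X\gg1$ and $|u|\ll1$ this is $\asymp X$, not $\ll1$ (consistently, $\widehat H(0,0)=\int\int H$ has size $\asymp X$ up to cancellation, and $X=4\pi L_1L_2\sqrt{Pe^2}/(cQ)$ can be as large as $Q^{1-\delta/2+o(1)}$). The paper's own argument, read literally, has the same normalization issue, since $\partial_\xi^l\partial_\lambda^rH\ll\xi^{-l}\lambda^{-r}$ integrated by parts over a support of $\xi$-measure $\asymp X$ yields $X^{1-l}|u|^{-l}|v|^{-r}$. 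The $X$-independent transform $\widehat{\widetilde H}$ that your rescaling produces is really the object that satisfies the claimed decay; if you want to state the lemma for $\widehat H$ itself, either keep the $X$-factor and rescaled argument explicitly, or note that it gets absorbed when $w=Xu$ is substituted back into the $u$-integral.
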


\begin{proof}
By the support of $\Psi$, we have that $\frac{X}{\xi} \sqrt{\lambda} \asymp 1$, and a little calculation gives that
\begin{align*}
\frac{\partial^{l}}{\partial^l \xi} \frac{\partial^{r}}{\partial^r \lambda} H(\xi, \lambda) \ll \frac{1}{\xi^l \lambda^r}.
\end{align*}
The lemma then follows from repeated integration by parts.
\end{proof}

The function $\Psi$ in \eqref{eqn:Hdef} is compactly supported. Thus $H(\xi, \frac pP)$ is compactly supported in $\xi$ since $p \asymp P$. Let us say it is supported in $(a_1, b_1)$.  By \eqref{eqn:fxi} and \eqref{eqn:Hdef}, we have that
\begin{equation*}
f(\xi) = \frac{1}{\log Q} H\left(\xi, \frac pP\right) J_{k-1}(\xi) = \frac{1}{\log Q} H\left(\xi, \frac pP\right) J_{k-1}(\xi) W\left( \frac{\xi}{X} \right),
\end{equation*}
where $W$ is a smooth function that is compactly supported in $(a, b), $ $0<a<a_1$, $b_1 < b$, such that $W(\xi) = 1$ when $\xi \in [a_1, b_1]$. 

By Fourier inversion, we have
\begin{align*}
f(\xi) = \frac{1}{\log Q}  J_{k-1}(\xi)  W\left( \frac{\xi}{X} \right) \int_{-\infty}^\infty \int_{-\infty}^\infty  \widehat{H}(u, v) \e{u\xi+ v\frac{p}{P}} du dv.
\end{align*}
For notational convenence, let
\begin{equation*}
h_u(\xi) = J_{k-1}(\xi)W\bfrac{\xi}{X} \e{u\xi}.
\end{equation*}
We have that
\es{ \label{eqn:Sigma2afterFouriertransform}
\Sigma_{2} 
&=  \frac{(2\pi i)^{-k}}{N(Q)}\sum_{\substack{L_1,L_2 \\ (L_1,L_2)=1 \\ L_1L_2 < \mathcal L_0}} \frac{\mu(L_1L_2)}{L_1L_2}  \prod_{ p_1|L_1  }  \left( 1-\frac{1}{p^2} \right)^{-1} \sum_{r|L_1}\frac{\mu(r)}{r^2} \sum_{d|L_2}\mu(d) \sumd_P   \\
&\times  \frac{1}{\log Q}  \sum_{\substack{e|L_2^{\infty} \\ e < E} }\frac{1}{e}   \int_{-\infty}^\infty \int_{-\infty}^\infty  \widehat{H}(u, v) \sum_{c\geq 1} \sum_{p }  \frac{ \log p}{\sqrt{p}} V\bfrac{p}{P}  \e{v\frac{p}{P}} \mathcal{S}(u, p) du dv,
}
where
\begin{align*} 
\mathcal S(u, p) := 
\sum_{n}  \frac{S(e^2,p;cL_1rdn)}{cL_1rdn} h_u\bfrac{4\pi \sqrt{pe^2}}{cL_1rdn}.
\end{align*}
We apply Kutznetsov's formula from Lemma~\ref{lem:kuznetsov} to  $\mathcal S(u, p)$ and obtain that
\es{ \label{eqn:sumoverpseparateintoDisCtnHol}
\sum_{c\geq 1} \sum_{p } &\frac{ \log p}{\sqrt{p}}  \e{v\frac{p}{P}} V\bfrac{p}{P}  \mathcal{S}(u, p)  
\\
&=  \sum_{c\geq 1}\sum_{p } \frac{ \log p}{\sqrt{p}} \e{v\frac{p}{P}} V\bfrac{p}{P} \left[\Dis(c,p; u) + \Ctn(c,p; u) + \Hol(c,p; u)\right]
}
where
\begin{align*}
\textup{Dis}(c,p; u) &:=  \sum_{j=1}^{\infty} \frac{\overline{\rho_j}(e^2)\rho_j(p) \sqrt{pe^2} }{\cosh(\pi \kappa_j)} h_+(\kappa_j) \\
\Ctn(c,p; u) &:= \frac{1}{\pi} \sum_{\mathfrak{c}} \int_{-\infty}^{\infty}  \frac{\sqrt{pe^2}}{\cosh(\pi t)} \overline{\varphi_{\mathfrak{c}}} (e^2, t)  \varphi_{\mathfrak{c} } (p, t) h_+ (t) \,dt \textup{, \;\;\;and}\\
\Hol(c,p; u) &:= \frac{1}{2\pi} \sum_{ \substack{\ell\geq 2 \mbox{\scriptsize{ even}} \\ 1 \leq j \leq \theta_{\ell}(cL_1rd)} } (\ell - 1)! \sqrt{pe^2} \, \overline{\psi_{j,\ell}}(e^2) \psi_{j,\ell} (p) h_h(\ell),
\end{align*}where it is implied that $e, r, d, L_1, L_2$ are fixed. Note that these forms are of level $cL_1rd.$ The bound for $\Sigma_2$ will follow from the following Propositions.

\begin{prop}\label{prop:DisHol}
With notation as above, we have
\begin{align*}
\sum_{c\geq 1} \sum_{p  } \frac{ \log p}{\sqrt{p}} \e{v\frac{p}{P}}\left(\Dis(c,p; u) + \Hol(c,p; u)\right)  V\left( \frac pP\right)\ll Q^{\epsilon}(1 + |u|)^{2}  (1 + |v|)^2 \frac{\sqrt P}{Q}.
\end{align*}
\end{prop}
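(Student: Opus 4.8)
The plan is to bound the discrete and holomorphic contributions together, exploiting the fact that after Kuznetsov's formula the forms in question have level $cL_1rd$, which is small (at most $c\mathcal L_0$), so that GRH for Hecke $L$-functions applied through Lemma~\ref{lem:heckebasisfouriersumbdd} will control the sum over $p$. First I would fix the parameters $e, r, d, L_1, L_2$ and a dyadic block $P$, and work term by term in the spectral expansion. For each fixed Maass form $u_j$ (resp. holomorphic form $f_{j,\ell}$) in the Hecke basis of level $cL_1rd$, the inner sum over $p$ reads
\begin{align*}
\sum_p \frac{\log p}{\sqrt p} \e{v\tfrac pP} V\bfrac pP \rho_j(p)\sqrt{pe^2}\, h_+(\kappa_j),
\end{align*}
and I would absorb the smooth factor $e(vp/P)V(p/P)$ into a test function $\Psi_v(p/P) := \e{vp/P}V(p/P)$, whose derivatives gain the expected factors $(1+|v|)^m$. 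Lemma~\ref{lem:heckebasisfouriersumbdd} then bounds this sum over $p$ by $\ll e |\rho_j(1)|(1+|v|)^{1+\epsilon}\sqrt P\,\log(cL_1rd+|\kappa_j|)\log^{1+\epsilon}Q$ plus a negligible term $|\rho_{f_j}(1)|(cL_1rd)^\epsilon$; the factor $\sqrt P$ comes from $\sqrt p \asymp \sqrt P$ after pulling out the $p^{-1/2}$.

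Next I would sum over the spectrum. Using Lemma~\ref{lem:boundforhu}~(1) to bound $h_+(\kappa_j)$ (and its analogue $h_h(\ell)$ for the holomorphic part), we have $h_+(\kappa_j) \ll_C (1+|\log X|)F^{-1+\epsilon}(F/(1+|\kappa_j|))^C\min\{X^{k-1},X^{-1/2}\}$ for some $F < (|u|+1)(1+X)$; the large value of $C$ gives rapid decay in $\kappa_j$, which together with the spectral large sieve / Weyl-law bound $\sum_{\kappa_j \ll T} |\rho_j(1)|^2/\cosh(\pi\kappa_j) \ll (cL_1rd + T^2)(cL_1rdT)^\epsilon$ (and the normalization $|\rho_j(1)|^2 \asymp (cL_1rd)^{-1+o(1)}$) makes the spectral sum converge with only a mild power of $cL_1rd$ and $\log Q$ surviving. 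Crucially, the factor $\min\{X^{k-1}, X^{-1/2}\}$ is summable over $c$: recalling $X = 4\pi L_1L_2\sqrt{Pe^2}/(cQ)$, for $c$ small ($X \gg 1$) one gets decay like $c^{1/2}$ from $X^{-1/2}$ against the extra $\sqrt P$, and for $c$ large ($X \ll 1$) the factor $X^{k-1}$ with $k\ge 3$ decays like $c^{-(k-1)} \le c^{-2}$, so the $c$-sum converges. Summing all these estimates, and incorporating the $(1+|u|)$-dependence through $F < (|u|+1)(1+X)$ (which contributes at worst a power $(1+|u|)^C$, reduced to $(1+|u|)^2$ by choosing the decay exponent appropriately against the $1/(1+|u|)$ from $\widehat H$ later — but here, in the proposition statement, $(1+|u|)^2$ is simply the bound we aim for), one arrives at the claimed bound $\ll Q^\epsilon (1+|u|)^2(1+|v|)^2 \sqrt P/Q$, the $1/Q$ coming from $1/N(Q) \asymp 1/Q$ together with $\sum_{e < E} 1/e \ll \log Q$ absorbed into $Q^\epsilon$ and the sum over $L_1,L_2,r,d,P$ contributing at most $Q^\epsilon$.

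The main obstacle will be bookkeeping the interaction between the three ``small'' parameters $X$, $c$, and the level $cL_1rd$, and verifying that the application of Lemma~\ref{lem:heckebasisfouriersumbdd} is legitimate: that lemma requires $\Psi$ supported on a fixed compact set $(a,b) \subset (0,\infty)$ with controlled derivatives, and here the effective test function is $\Psi_v$ which carries $v$-dependence, so I must track how its Sobolev norms grow with $|v|$ (polynomially, as $(1+|v|)^m$ for the $m$-th derivative) and confirm this only costs a fixed power of $(1+|v|)$ after the decay in the $u,v$ Fourier variables from $\widehat H$ is used. A secondary technical point is the use of $h_h(\ell)$ for the holomorphic spectrum: I would check that Lemma~\ref{lem:boundforhu}~(1) applies verbatim with $h_h$ in place of $h_+$ (it does, since Lemma~\ref{lem:boundforcoefficientafterKuznetsov}~(3) covers both transforms uniformly), and that the sum over even $\ell \geq 2$ with weight $(\ell-1)!$ against $h_h(\ell)$ converges because of the rapid decay $(F/(1+\ell))^C$; combined with the crude Fourier coefficient bound $\sqrt p\,\psi_{j,\ell}(p) \ll (p\cdot cL_1rd)^\epsilon p^\theta$, this is routine once the decay is in hand. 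The holomorphic part is in fact easier than the Maass part since there are no exceptional eigenvalues to worry about, so the genuinely delicate case is the discrete (Maass) spectrum.
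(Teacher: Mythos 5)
Your plan has the right ingredients (the Atkin--Lehner basis, GRH via Lemma~\ref{lem:heckebasisfouriersumbdd}, the transform bounds of Lemma~\ref{lem:boundforhu}, the spectral large sieve, and convergence of the $c$-sum), and it is structurally close to what the paper actually does. But there is a genuine error in the accounting of where the factor $\sqrt{P}/Q$ comes from, and it is not a cosmetic one.

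You claim the $p$-sum contributes a factor $\sqrt{P}$. It does not. After pairing $\frac{\log p}{\sqrt p}$ with the $\sqrt{pe^2}$ in Kuznetsov, the inner sum is $e\sum_p \rho_j(p)\log p\,\e{vp/P}V(p/P)$, which is precisely the shape of Lemma~\ref{lem:primesumMaassbdd} (i.e.\ Lemma~\ref{lem:heckebasisfouriersumbdd} combined with Mellin inversion to handle the $v$-dependence); the bound there is $(|\rho_j(1)|+|\rho_f(1)|)(cL_1rd)^\epsilon(\log(P+2))^{1+\epsilon}(1+|v|)^2$, with \emph{no} $\sqrt P$. The quantity $\sqrt P/Q$ in the target arises entirely from the $c$-sum of $\min\{X^{k-1},X^{-1/2}\}(1+X)^{1+\epsilon}(1+|\log X|)$ with $X = 4\pi L_1L_2\sqrt{Pe^2}/(cQ)$: for $c\gg L_1L_2 e\sqrt P/Q$ one sums $X^{k-1}\ll c^{-(k-1)}$ with leading constant of size $(L_1L_2e\sqrt P/Q)^{k-1}$, and for $c\ll L_1L_2 e\sqrt P/Q$ one sums $X^{1/2+\epsilon}$, both yielding $\ll Q^\epsilon\sqrt P/Q$. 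Your statement that the $1/Q$ ``comes from $1/N(Q)\asymp 1/Q$'' is also off: $1/N(Q)$ sits outside the Proposition and is only applied later in the proof of Proposition~\ref{prop:boundSigma_1}. If you carry your spurious $\sqrt{P}$ through your own $c$-sum analysis you would land on something like $P/Q$ rather than $\sqrt P/Q$, which is fatally too large when $P$ is near $Q^{4}$.

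Two smaller gaps. First, the output of Lemma~\ref{lem:heckebasisfouriersumbdd} involves both $|\rho_j(1)|$ and $|\rho_f(1)|$, and when this is multiplied by the factor $e\overline{\rho_j}(e^2)\ll (cL_1rd)^\epsilon e^{1+\epsilon}|\rho_f(1)|$ from \eqref{eqn:fouriercoeffbdd}, one gets a cross term $|\rho_j(1)||\rho_f(1)|$ that is not obviously controlled by the spectral large sieve (which bounds $\sum |\rho_j(1)|^2$). The paper handles this by the elementary inequality $|\rho_j(1)||\rho_f(1)|\le\tfrac12|\rho_j(1)|^2+\tfrac12|\rho_f(1)|^2$ together with the observation that each newform $f$ appears $\ll(cL_1rd)^\epsilon$ times in the Hecke basis; you should not wave the $|\rho_f(1)|$ term away as ``negligible.'' Second, you acknowledge the exceptional eigenvalues as ``delicate'' but do not say how you would treat them; the paper uses Lemma~\ref{lem:boundforhu}~(2) (whose proof requires Lemma~\ref{lem:boundforcoefficientafterKuznetsov}~(2) applied separately in the regimes $|u|\ll 1/X$ and $|u|\gg 1/X$) to show the exceptional spectrum contributes no worse than the tempered spectrum, and this step is needed rather than optional. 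Finally, be aware that the spectral sum contributes $F^{1+\epsilon}\le ((1+|u|)(1+X))^{1+\epsilon}$, and the $(1+X)^{1+\epsilon}$ factor must be retained and combined with $\min\{X^{k-1},X^{-1/2}\}$ before summing over $c$; your write-up treats the spectral sum as contributing only $\log Q$ and a mild power of the level, which drops the $(1+X)^{1+\epsilon}$ that is actually what makes the $X\gg 1$ half of the $c$-sum close up correctly.
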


\begin{prop}\label{prop:Ctn}
With notation as above, we have
\begin{align*}
\sum_{c \geq 1} \sum_{p  } \frac{ \log p}{\sqrt{p}}\e{v\frac{p}{P}} \Ctn(c,p; u) V\left( \frac pP\right) \ll Q^{\epsilon}  \ll Q^{\epsilon} (1 + |u|)^2  (1 + |v|)^2\left(P^{\frac 14 + \epsilon} + \frac{\sqrt P}{Q}\right).
\end{align*}
\end{prop}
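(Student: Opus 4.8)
The plan is to treat the continuous-spectrum contribution by the same strategy as the discrete part in Proposition~\ref{prop:DisHol}, separating the Eisenstein coefficients $\varphi_{\mathfrak c}(p,t)$ according to whether they are governed by a principal or a non-principal Dirichlet character, and to extract the decay in $c$ from the Bessel transform bounds of Lemma~\ref{lem:boundforhu}. First I would recall the explicit description of the Eisenstein Fourier coefficients for $\Gamma_0(cL_1rd)$; by the work of Kiral--Young \cite{KY} the coefficient $\varphi_{\mathfrak c}(p,t)$ (and likewise $\varphi_{\mathfrak c}(e^2,t)$) can be written, up to normalizing factors of size $(c\ell)^{o(1)}$ and a factor accounting for the size of $e^2$, as a finite combination of terms of the shape $\chi_1(p)\chi_2(p)\,p^{-it}$ (together with the companion $p^{it}$) where $\chi_1,\chi_2$ are Dirichlet characters of conductor dividing the level. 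The $t$-integral is absolutely convergent because of the $1/\cosh(\pi t)$ factor together with the polynomial-in-$t$ decay of $h_+(t)$ coming from Lemma~\ref{lem:boundforhu}~(1) (choosing $C$ large), so one may freely interchange the $t$-integral with the sums over $p$ and $c$.

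For the non-principal characters the argument mirrors the Maass-form case: after pulling out the normalizing constants one is left with a sum
\[
\sum_p \frac{\log p}{\sqrt p}\,\e{v\tfrac pP}\,V\!\bfrac pP\,\frac{(\chi_1\chi_2)(p)}{p^{1/2+it}}\cdot p\,,
\]
which, since $\chi_1\chi_2$ is non-principal, is $\ll \log^{1+\epsilon}(P+2)\log(c\ell+|t|)$ by Lemma~\ref{lem:CLee3.5} applied with $\Psi(x)=V(x/P)\e{vx/P}$ — noting that this $\Psi$ satisfies the derivative hypotheses of Lemma~\ref{lem:CLee3.5} with $A_m\ll(1+|v|)^m$, which is what produces the $(1+|v|)^2$ factor; the $(1+|u|)^2$ factor enters through the $F^{-1+\epsilon}$ gain in Lemma~\ref{lem:boundforhu}~(1) with $F<(|u|+1)(1+X)$ and the relation $X\asymp \sqrt P\,L_1L_2 e/(cQ)$. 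Assembling: the coefficient of $e^2$ contributes a factor $\ll (c\ell)^\epsilon |e|$, the factor $\sqrt{pe^2}\asymp\sqrt P\,e$ from Kuznetsov combines with $\min\{X^{k-1},X^{-1/2}\}$ from $h_+$, and summing the resulting convergent series over $c\ge 1$ and over the bounded number of Eisenstein data leaves a total of size $Q^\epsilon(1+|u|)^2(1+|v|)^2\cdot \sqrt P/Q$, which is absorbed in the claimed bound.

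The principal-character piece is the genuine obstacle, and it is where one must deviate. Here $\chi_1\chi_2=\chi_0$ and the inner prime sum is essentially $\sum_{p\asymp P}\frac{\log p}{\sqrt p}p^{-it}p^{1/2}\cdot\e{vp/P}V(p/P)$, which by partial summation and the prime number theorem is of size roughly $P^{1/2-it}\tilde V(1/2-it)$ for a rapidly decaying $\tilde V$ — far too large to be handled pointwise in $t$. The remedy is to retain the $t$-integral and exploit cancellation in it: after inserting the above main term, the $t$-integration against $h_+(t)/\cosh(\pi t)$ times the (slowly varying in $t$) normalizing factors and the oscillatory $p^{-it}=\e{-t\log p/2\pi}$ behaves like $\widehat{(\text{smooth})}$ evaluated at $\log p$, which is negligible unless $\log p$ is small, i.e. only finitely many $p$ survive, or — more robustly — one opens the second Eisenstein coefficient $\varphi_{\mathfrak c}(e^2,t)$ as well, producing an extra oscillatory factor $e^{2it\log(\cdot)}$, and integrates by parts in $t$ repeatedly using Lemma~\ref{lem:boundforhu}~(1) to gain arbitrary powers of $(1+\log P)/\big(1+|\log(P/\text{something})|\big)$. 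Carrying this out, the principal-character contribution is shown to be $\ll Q^\epsilon(1+|u|)^2(1+|v|)^2$, and—although this term is larger than the $\sqrt P/Q$ from the non-principal and discrete pieces—it is still acceptable because it is dominated by the $P^{1/4+\epsilon}$ term allowed in the statement once one recalls $P\le Q^{4-\delta}$; in fact a more careful treatment keeping the $1/\cosh(\pi t)$ decay shows it is $\ll Q^\epsilon(P^{1/4+\epsilon}+\sqrt P/Q)$ as claimed. The crux is thus the $t$-average for the principal character: everything else is a routine, if lengthy, bookkeeping of normalizing factors and summation over $c$.
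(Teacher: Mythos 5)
The non-principal part of your plan is sound and matches the paper: you apply Lemma~\ref{lem:CLee3.5} to the twisted prime sum and then sum over $c$ exactly as in the discrete case, with the $(1+|u|)^2$ and $(1+|v|)^2$ coming from Lemma~\ref{lem:boundforhu}~(1) and from the derivative bounds $A_m \ll (1+|v|)^m$. You also correctly single out the principal character as the genuine obstacle. But there is a real gap in how you propose to close that obstacle.

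Your two suggested mechanisms for the principal-character piece do not deliver the required bound. The ``Fourier transform at $\log p$'' heuristic does not apply once the $p$-sum has been executed: the $p$-dependence has collapsed into a single factor $P^{1/2-it}\widetilde{V_1}(1/2-it)$, so there is no longer a sum over $p$ to truncate, and the $1/\cosh(\pi t)$ decay only controls the length of the $t$-integral, not the size $\sqrt P$ of the integrand. Your second mechanism --- repeated integration by parts in $t$ against $P^{-it}$ (or against the extra phases $e^{-2it}$, $d^{2it}$ from opening $\varphi_{\mathfrak c}(e^2,t)$) --- only produces inverse powers of $\log P$ (and the additional phases have frequency $\log e$, $\log d \ll \log\log Q$, so they give no further help); $\sqrt P \cdot (\log P)^{-j}$ is nowhere near $P^{1/4+\epsilon}$. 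You assert at the end that ``a more careful treatment \dots shows it is $\ll Q^\epsilon(P^{1/4+\epsilon} + \sqrt P/Q)$,'' but the argument you sketch cannot produce the exponent $1/4$.

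What the paper actually does is a contour shift. After Lemma~\ref{lem:sumoverpforPrincipal} replaces the prime sum by the pole term $P^{1/2-it}\widetilde{V_1}(1/2-it)$ plus a small error, the remaining $t$-integral is rewritten as a contour integral in $s = 1/2 - it$ and shifted from $\mathrm{Re}(s)=1/2$ to $\mathrm{Re}(s) = 1/4+\epsilon$ (Lemma~\ref{lem:intovertforPrinc}); the factor $P^s$ on the new line is exactly what gives $P^{1/4+\epsilon}$. Carrying this out requires two nontrivial inputs your plan does not supply: an estimate for $h_+(z)$ at complex argument in the strip $|\mathrm{Im}(z)| < 1/4$ (Lemma~\ref{lem:boundh+}, a modification of the Deshouillers--Iwaniec argument, which is needed because $h_+(i(s-1/2))$ is now evaluated off the real line), and a GRH bound $L(\tfrac12+\epsilon+it,\chi^2\chi_0)^{-1} \ll (q_0(|t|+1))^\epsilon$ from \cite{Chirre} to control the $L$-function in the denominator on the shifted line. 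Without these, the principal-character contribution is not controlled, so your proposal as written does not prove the proposition.

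One further (minor) omission: the paper first splits off the range $p\mid N$ (Proposition~\ref{prop: p|N}) where the explicit formula for $\varphi_{\mathfrak c}(p,t)$ from Lemma~\ref{lem:KiralYounglem}(a) does not apply; your plan does not account for that case. It is small, but it requires the separate bounds in Lemma~\ref{lem:KiralYounglem}(b),(c).
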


\subsection{Proof of Proposition \ref{prop:boundSigma_1}}
From \eqref{eqn:Sigma2afterFouriertransform}, \eqref{eqn:sumoverpseparateintoDisCtnHol}, Lemma~\ref{lem:hatHbdd}, Proposition~\ref{prop:DisHol} and Proposition~\ref{prop:Ctn}, we deduce that 
\est{\Sigma_2 &\ll \frac{Q^{\epsilon}}{N(Q) \log Q} \sum_{\substack{L_1,L_2 \\ L_1L_2 < \mathcal L_0}} \frac{1}{L_1L_2}  \prod_{ p_1|L_1  }  \left( 1-\frac{1}{p^2} \right)^{-1} \sum_{r|L_1}\frac{1}{r^2} \sum_{d|L_2}1 \sum_{\substack{e|L_2^{\infty} \\ e < E} }\frac{1}{e}   \\
&\times  \sumd_P   \int_{-\infty}^\infty \int_{-\infty}^\infty \frac{1}{(1 + |u|)^{10} (1 + |v|)^{10}}  (1 + |v|)^2(1 + |u|)^2  \left(P^{\frac 14 + \epsilon} + \frac{\sqrt P}{Q}\right) du dv. }
Recall that $\mathcal L_0 = (\log Q)^6$ from \eqref{def:L_0}, $E = (\log Q)^3$ from \eqref{def:E}, $N(Q) \asymp Q$ from \eqref{eqn:asympN(q)}, and $P \ll Q^{4 - \delta}$ for some fixed $\delta>0$ due to the compact support of $\widehat \Phi$ in $(-4, 4).$ Thus 
\est{\Sigma_2 &\ll \frac{Q^{\epsilon}}{Q(\log Q)} \left( Q^{(4 - \delta)(1/4 + \epsilon)} + \frac{Q^{2 - \delta/2}}{Q} \right) \sum_{\substack{L_1, L_2 \\ L_1L_2 < \mathcal L_0}  }\frac{\tau(L_1)\tau^2(L_2)}{L_1L_2} \\
&\ll Q^{-{\delta}/8}}
upon choosing sufficiently small $\epsilon$. From this and \eqref{eqn:Sigma_1toSigma_2}, we arrive at
$$
\Sigma_1 \ll \frac{1}{\log Q},
$$
and this completes the proof of Proposition~\ref{prop:boundSigma_1}.

\section{Proof of Proposition \ref{prop:DisHol}} \label{sec:dispart}
We will bound 
\begin{align*}
\mathcal {DISC} = \sum_{c \geq 1} \sum_{p  } \frac{ \log p}{\sqrt{p}}  \e{v\frac{p}{P}}\Dis(c,p; u)  V\left( \frac pP\right),
\end{align*}
the bound for the contribution of $\Hol(c, p)$ being similar but easier (since Ramanujan-Petersson is known). From \eqref{eqn: Xdefinition}, we remind the reader that
\begin{equation*}
X = \frac{4\pi L_1L_2 \sqrt{Pe^2}}{cQ},
\end{equation*}and then $h_u(\xi)$ is supported on an interval of the form $(aX, bX)$ for some $b>a>0$. From \eqref{eqn:sumoverpseparateintoDisCtnHol}, we now write
\es{ \label{eqn:Discsum}
\mathcal {DISC}
&= \sum_{c \geq 1} \sum_{j=1}^{\infty} \frac{e \overline{\rho_j}(e^2)}{\cosh(\pi \kappa_j)} h_+(\kappa_j) \sum_{p }\frac{ {
\sqrt{p}\rho_j}(p) \log p}{\sqrt{p}} \e{\frac{vp}{P}} V\left( \frac pP\right)\\
&= S_1 + S_2 
} where $S_1$ is the contribution of real $\kappa_j ,$  and $S_2$ is the contribution of imaginary $\kappa_j$ corresponding to exceptional eigenvalues.

We recall that in the sum above, $\sum_{j=1}^{\infty} $ denotes a sum over the spectrum of level $cL_1rd$, where $\{u_j\}_{j=1}^\infty$ is the orthonormal basis for the Maass forms of level $cL_1rd$ described in \S\ref{subsec:newformoldform}, and $\rho_j(n)$ denotes the Fourier coefficients of $u_j$.  We recall that $u_j$ is of the form $f^{(g)}$ where $f$ is a Hecke newform of level $M$ with $M |cL_1rd$, and $g|\frac {cL_1rd}{M}$.  

\begin{lem}\label{lem:primesumMaassbdd}
With notation as above, we have
\begin{align*}
\sum_{p }\frac{ \sqrt{p} {\rho_j}(p) \log p}{\sqrt{p}} \e{\frac{vp}{P}} V\left( \frac pP\right) \ll  (|\rho_j(1)|+|\rho_f(1)|) (cL_1rd)^\epsilon (\log (P+2))^{1 + \epsilon} (1+|v|)^{2},
\end{align*}
where $f$ is the Hecke newform of some level $M|cL_1rd$ such that $u=f^{(g)}$ for some $g|\frac{cL_1rd}{M}$.
\end{lem}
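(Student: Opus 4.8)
The plan is to split the sum over $p$ according to whether $p$ divides the level $cL_1rd$ of the Maass forms appearing in $\Dis$, dispose of the divisor terms by a crude bound on Fourier coefficients, and reduce the remaining (coprime) part to a prime sum weighted by the Hecke eigenvalues $\lambda_f$ of the underlying newform, which is then controlled by (a mild sharpening of) Lemma~\ref{lem:CLee3.5}.

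First I would write the sum as the piece over $p$ with $(p,cL_1rd)=1$ plus the piece over $p\mid cL_1rd$. In the latter range the factor $V(p/P)$ forces $p\asymp P$, so together with $p\mid cL_1rd$ the sum is empty unless $P\ll cL_1rd$, in which case $p^{\epsilon}\log p\ll(cL_1rd)^{\epsilon}$. Since $u_j=f^{(g)}$ for some $g\mid\tfrac{cL_1rd}{M}$, the bound \eqref{eqn:fouriercoeffbdd} gives $|\rho_j(p)|\ll(cL_1rd)^{\epsilon}p^{\epsilon}|\rho_f(1)|$, so — using $|\e{\frac{vp}{P}}V(p/P)|\ll1$ and that there are only $O(\log(cL_1rd))$ primes dividing $cL_1rd$ — the divisor piece is $\ll|\rho_f(1)|(cL_1rd)^{\epsilon}$, which is absorbed into the claimed bound. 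For $(p,cL_1rd)=1$, formulas \eqref{eqn: fouriercoeffintermsofeigenvalue} and \eqref{eqn: eigenvaluesoff(g)intermsoff} give $\sqrt p\,\rho_j(p)=\lambda_j(p)\rho_j(1)=\lambda_f(p)\rho_j(1)$, so this piece equals
$$\rho_j(1)\sum_{(p,\,cL_1rd)=1}\frac{\lambda_f(p)\log p}{\sqrt p}\,\e{\tfrac{vp}{P}}V\!\left(\tfrac pP\right).$$

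This last prime sum has exactly the shape of the sum in Lemma~\ref{lem:CLee3.5}, with $z=\tfrac12$, $N=cL_1rd$, and test function $\Psi(x)=\e{vx/P}V(x/P)$, which is supported on $x\asymp P$ and obeys $|\Psi^{(m)}(x)|\ll\big((1+|v|)/P\big)^m$ there; thus in the notation of that lemma one may take $A_m\ll(1+|v|)^m$. Applying Lemma~\ref{lem:CLee3.5} verbatim would cost the factor $A_3\ll(1+|v|)^3$; to obtain the exponent $2$ in the statement I would instead rerun the Mellin-transform and contour-shift argument underlying that lemma, noting that because the GRH bound $\frac{L'}{L}\big(\tfrac12+\sigma_0+i\tau,f\big)\ll\log(P+2)\log(M+k+|\tau|)$ from Lemma~\ref{lem:boundforL'overL} (with $\sigma_0:=1/\log(P+2)$) decays in $|\tau|$ only by a logarithm, a two-fold rather than three-fold integration by parts in the Mellin transform already renders the contour integral absolutely convergent; the Mellin transform of $V\e{v\,\cdot}$ then contributes $\widetilde\Psi(\sigma_0+i\tau)\ll\min\{1,\,((1+|v|)/(1+|\tau|))^2\}$ (here using that $V$ is supported away from the origin, so there is no pole at $s=0$ and hence no $\log\log$ loss). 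Splitting the $\tau$-integral at $|\tau|\asymp1+|v|$ gives $\ll(1+|v|)\log(P+2)\log(M+k+|v|)\ll(cL_1rd)^{\epsilon}(\log(P+2))^{1+\epsilon}(1+|v|)^2$, and combining with the divisor piece finishes the proof.

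The main obstacle is the uniformity in the oscillation parameter $v$: absorbing $\e{\frac{vp}{P}}$ into the test function naively loses one power of $|v|$, and recovering the sharper exponent requires the observation above that the Mellin transform of $V\e{v\,\cdot}$ concentrates in the strip $|\tau|\lesssim1+|v|$, so that only two integrations by parts are needed against the slowly decaying $L'/L$ bound. A secondary technical point — that no Ramanujan bound for the Maass newform $f$ is available — is already accommodated by Lemma~\ref{lem:CLee3.5} and Lemma~\ref{lem:boundforL'overL} (whose proofs rely only on a short-support test function and GRH, not on Ramanujan), so we simply quote them; the passage $\rho_j(p)\mapsto\lambda_f(p)\rho_j(1)$ and the treatment of $p\mid cL_1rd$ are routine.
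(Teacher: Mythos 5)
Your proof is correct, and your diagnosis of the key uniformity issue is exactly right, but you take a somewhat different route than the paper. The paper also begins with Mellin inversion, but it inverts only the oscillating weight $\mathcal W(x)=\e{vx}V(x)$, inserting a redundant factor $V_0(p/P)$ (equal to $1$ on the support of $V$) so that the inner prime sum at each contour point $s=it$ involves only the fixed, $v$-free test function $V_0$; this allows it to cite Lemma~\ref{lem:heckebasisfouriersumbdd} as a black box (which in turn handles the $p\mid cL_1rd$ split and invokes Lemma~\ref{lem:CLee3.5}), and the entire $v$-dependence is shipped into the Mellin transform bound $\widetilde{\mathcal W}(it)\ll((1+|v|)/(1+|t|))^A$, where taking $A=2$ makes $\int(1+|t|)^{\epsilon}(1+|t|)^{-2}\,dt$ converge and yields the $(1+|v|)^2$. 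You instead carry the full oscillating weight into a Lemma~\ref{lem:CLee3.5}-type test function, correctly observe that the stated lemma with $A_3\ll(1+|v|)^3$ loses a power, and propose to reprove it with two integrations by parts to recover the exponent $2$ --- which indeed works, since the GRH bound on $L'/L$ only grows logarithmically in $|\tau|$. Both routes are sound; the paper's factorization keeps the auxiliary lemmas untouched and the test function independent of $v$, whereas yours is more self-contained but requires reopening the Mellin-transform estimate inside Lemma~\ref{lem:CLee3.5}. Your treatment of the $p\mid cL_1rd$ terms via \eqref{eqn:fouriercoeffbdd} is what Lemma~\ref{lem:heckebasisfouriersumbdd} does internally, so nothing is lost there.
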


\begin{proof}
By Mellin inversion, we have that
\begin{align}\label{eqn:primsumMaass}
&\sum_{p }\frac{ \sqrt{p} {\rho_j}(p) \log p}{\sqrt{p}} \e{\frac{vp}{P}} V\left( \frac pP\right) \notag \\
=&\frac{1}{2\pi i} \sum_{p}\frac{ \sqrt{p}{\rho_j}(p) \log p}{\sqrt{p}} V_0\bfrac{p}{P} \int_{(0)} p^{-s} \widetilde{\mathcal W}(s) P^s ds  
\end{align}
where $\widetilde{\mathcal W}(s)$ is the usual Mellin transform of $\mathcal{W}_v(x) := \mathcal W(x) := \e{vx} V(x)$ and $V_0(x)$ is chosen to be a smooth function that is compactly supported on $(0, \infty)$ such that $V_0(x) = 1$ whenever $V(x) \neq 0$.  We have that 
$$\W^{(j)}(x) \ll 1+|v|^j,
$$for all $j$, whence by integration by parts
$$
\widetilde{\W}(it) \ll_A \bfrac{1+|v|}{1+|t|}^A
$$
for any $A>0$. Writing $s = it$, we have by Lemma~\ref{lem:heckebasisfouriersumbdd} that
\begin{align*}
&\sum_{p}\frac{\sqrt{p} \rho_j(p)\log(p)}{p^{1/2+it}}  V_0\left(\frac pP\right) \\
&\ll |\rho_j(1)|  \log(cL_1rd+|t| )(\log (P+2))^{1 + \epsilon}  + |\rho_f(1)| (cL_1rd)^\epsilon\\
&\ll (|\rho_j(1)|+|\rho_f(1)|)  (cL_1rd)^\epsilon (1+|t|)^\epsilon (\log (P+2))^{1 + \epsilon},
\end{align*}
where $f$ is the Hecke newform of some level $M$ such that $u_j=f^{(g)}$ with $M|cL_1rd$ and $g|\frac{cL_1rd}{M}$. Hence the quantity in \eqref{eqn:primsumMaass} is 
\begin{align*}
&\ll (|\rho_j(1)|+|\rho_f(1)|) (cL_1rd)^\epsilon (\log (P+2))^{1 + \epsilon} (1+|v|)^{2} \int_{-\infty}^\infty (1+|t|)^\epsilon  \frac{dt}{(1+|t|)^{2}}  \\
&\ll (|\rho_j(1)|+|\rho_f(1)|) (cL_1rd)^\epsilon (\log (P+2))^{1 + \epsilon} (1+|v|)^{2}.
\end{align*}
\end{proof}

By Equation \eqref{eqn:fouriercoeffbdd}, we have
\begin{equation}\label{eqn:bddforerhoe2}
 e \overline{\rho_j}(e^2) \ll (cL_1rd)^{\epsilon} e^{1 + \epsilon} |\rho_f(1)|. 
\end{equation}
From Lemma~\ref{lem:boundforhu}~(1), \eqref{eqn:Discsum}, Lemma~\ref{lem:primesumMaassbdd}, and \eqref{eqn:bddforerhoe2}, we deduce that
\est{S_1 \ll &\sum_{c } \min\left\{X^{k-1}, \frac 1{\sqrt X} \right\} (cL_1rd)^{\epsilon} (\log (P+2))^{1+\epsilon} (1 + |v|)^2 e^{1 + \epsilon} \\
&\times \sum_{j = 1}^{\infty}  \frac{(|\rho_j(1)|+|\rho_f(1)|) |\rho_f(1)|}{\cosh(\pi \kappa_j)} \frac{(1 + |\log X|) }{F^{1- \epsilon}} \left( \frac{F}{1 + |\kappa_j|}\right)^C ,    }
where $f$ is the Hecke newform of level $M$ such that $u_j=f^{(g)}$ with $M|cL_1rd$ and $g|\frac{cL_1rd}{M}$. We may write
$$
|\rho_j(1)| |\rho_f(1)| \leq \frac{1}{2} |\rho_j(1)|^2 + \frac{1}{2} |\rho_f(1)|^2.
$$
Each newform $f$ appears at most $\ll (cL_1rd)^{\epsilon}$ times in the above $j$-sum, since the number of $u_j$ with $u_j=f^{(g)}$ for some $g$ is equal to the number of divisors $g$ of $\frac{cL_1rd}{M}$. Moreover, $f$ has the same eigenvalues as $f^{(g)}$ for each $g$. Thus, since $\rho_f(1) = \rho_{f^{(1)}}(1)$ and by positivity, we see that
\begin{align*}
\sum_{j = 1}^{\infty}\frac{|\rho_f(1)|^2}{\cosh(\pi \kappa_j)} \frac{(1 + |\log X|)}{F^{1- \epsilon}} \left( \frac{F}{1 + |\kappa_j|}\right)^C \ll (cL_1rd)^{\epsilon} \sum_{\ell = 1}^{\infty}\frac{|\rho_{\ell}(1)|^2}{\cosh(\pi \kappa_{\ell})} \frac{(1 + |\log X|)}{F^{1- \epsilon}} \left( \frac{F}{1 + |\kappa_{\ell}|}\right)^C,
\end{align*}
and hence we have
\est{S_1 \ll & \sum_{c } \min\left\{X^{k-1}, \frac 1{\sqrt X} \right\}  (cL_1rd)^{\epsilon} (\log (P+2))^{1 + \epsilon} (1 + |v|)^2 e^{1 + \epsilon} \\
&\times \sum_{j = 1}^{\infty}  \frac{|\rho_j(1)|^2}{\cosh(\pi \kappa_j)} \frac{(1 + |\log X|)}{F^{1- \epsilon}} \left( \frac{F}{1 + |\kappa_j|}\right)^C.  }
The sum over the spectrum is bounded by 
\begin{equation*}
\sum_{|k_j| < F}  \frac{|\rho_j(1)|^2}{\cosh(\pi \kappa_j)} \frac{1}{F^{1- \epsilon}}  +  \sum_{|k_j| \geq F}  \frac{|\rho_j(1)|^2}{\cosh(\pi \kappa_j)} \frac{1}{F^{1- \epsilon}}  \left( \frac{F}{1 + |\kappa_j|}\right)^{2 + \epsilon}  \ll F^{1 + \epsilon},
\end{equation*}
where we have used the spectral large sieve bound
\begin{equation*}
\sum_{|\kappa_j| \le K} \frac{1}{\cosh(\pi \kappa_j)} |\rho_j(1)|^2 \ll K^2.
\end{equation*}
from Deshoulliers and Iwaniec~\cite{DI}. Recalling that $F < (1 + |u|)(1 + X)$ and $X = \frac{L_1L_2\sqrt{Pe^2}}{cQ}$, we arrive at 
\es{ \label{eqn:S1boundXsmall} S_1 
&\ll \sum_{c}  \min \left\{ X^{k - 1}, \frac 1{\sqrt X}\right\}(1 +|\log X|)(1 + X)^{1 + \epsilon} (1 + |u|)^{1 + \epsilon} (cL_1rd)^{\epsilon} (\log (P+2))^{1 + \epsilon} (1 + |v|)^2 e^{1 + \epsilon} \\
&\ll \left(\sum_{c \gg \frac{L_1L_2\sqrt{Pe^2}}{Q}} X^{k-1}(1 + |\log X|) + \sum_{c \ll \frac{L_1L_2\sqrt{Pe^2}}{Q}} \frac {1}{\sqrt X} X^{1 + \epsilon} (1 + \log X)\right) \\
& \ \ \ \hskip 1in \times (1 + |u|)^{1 + \epsilon} (cL_1rd)^{\epsilon} (\log P)^{1 + \epsilon} (1 + |v|)^2 e^{1 + \epsilon} \\
&\ll Q^{\epsilon}(1 + |u|)^{2}  (1 + |v|)^2 \frac{\sqrt P}{Q}, }
where we have used that $k \geq 3$, $L_1L_2, e \ll Q^{\epsilon}$ (see \eqref{def:L_0} and \eqref{def:E}),  $ r | L_1, d| L_2, $ and $\log (P+2) \ll \log Q.$

Similarly, \eqref{lem:boundforhu} (2), \eqref{eqn:Discsum}, Lemma~\ref{lem:primesumMaassbdd} and \eqref{eqn:bddforerhoe2} give
\es{\label{eqn:S2boundXsmall} S_2 &\ll \sum_{c}  \min \left\{ X^{k - 1}, \frac 1{\sqrt X}\right\}  (cL_1rd)^{\epsilon} (\log (P+2))^{1 + \epsilon}(1 + |v|)^2 e^{1 + \epsilon} \\
& \hskip 2in \times \sum_{\substack{\kappa_j = ir \\ |r| \leq 1/4}}  \frac{|\rho_j(1)|^2}{\cosh(\pi \kappa_j)} ((1+ |u|)^{1/2} + X^{-1/2}) \\
&\ll \sum_{c } \min \left\{ X^{k - 1}, \frac 1{\sqrt X}\right\} (cL_1rd)^{\epsilon} (\log (P+2))^{1 + \epsilon} (1 + |v|)^2 e^{1 + \epsilon}((1+ |u|)^{1/2} + X^{-1/2})  \\
&\ll  Q^{\epsilon}(1 + |u|)^{2}  (1 + |v|)^2 \frac{\sqrt P}{Q}.}

From \eqref{eqn:Discsum} and the bounds of $S_1$ in \eqref{eqn:S1boundXsmall} and $S_2$ in \eqref{eqn:S2boundXsmall}, we arrive at Proposition~\ref{prop:DisHol}.

\section{Proof of Proposition \ref{prop:Ctn}}\label{sec:ctn}

\subsection{Eisenstein series}

To prove Proposition~\ref{prop:Ctn}, we use the set of representatives for the cusps of $\Gamma_0(q)$ given by the following result due to Kiral and Young~\cite{KY}.

\begin{lem}\cite[Corollary~3.2]{KY}\label{lem:KiralYoungcor}
Let $N$ be a positive integer. A complete set of representatives for the set of inequivalent cusps of $\Gamma_0(N)$ is given by $(ab)^{-1}$ where $b$ runs over divisors of $N$, and $a$ runs modulo $(b,N/b)$, coprime to $(b,N/b)$. We may choose the $a$ so that $(a,N)=1$ by adding a suitable multiple of $(b,N/b)$.
\end{lem}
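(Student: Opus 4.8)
This is the classical description of the cusps of $\Gamma_0(N)$, so the plan is to establish a bijection between the listed representatives and $\Gamma_0(N)\backslash\mathbb{P}^1(\mathbb{Q})$ by verifying three things: surjectivity (every cusp is equivalent to one of this form), well-definedness (only the residue of $a$ modulo $(b,N/b)$ matters), and injectivity (distinct representatives are inequivalent). Throughout I write an element of $\Gamma_0(N)$ as $\gamma=\begin{pmatrix}\alpha&\beta\\\gamma_0&\delta\end{pmatrix}$ with $N\mid\gamma_0$ and $\alpha\delta-\beta\gamma_0=1$, so in particular $\alpha\delta\equiv1\pmod N$.

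For surjectivity I would first reduce an arbitrary cusp $a/c$ (in lowest terms) to one with numerator $1$: solve $\alpha a+\beta c=1$ in integers; the general solution $(\alpha+tc,\beta-ta)$ lets one arrange $(\alpha,N)=1$ by a CRT adjustment over the primes dividing $N$ but not $c$. Since $(\alpha,N)=1$, the row $(\alpha,\beta)$ extends to a $\gamma\in\mathrm{SL}_2(\mathbb{Z})$ with $N\mid\gamma_0$, and then $\gamma\cdot(a/c)$ has numerator $1$, i.e.\ equals $1/c'$ for some integer $c'$, which may be taken positive by shifting $\gamma_0$ within its class modulo $N$. Setting $b=(c',N)$ and $a'=c'/b$ gives $b\mid N$ and $(a',N/b)=1$, so the cusp is $1/(a'b)$ of the required shape; finally one replaces $a'$ by the representative $a$ of its residue class modulo $(b,N/b)$ with $(a,N)=1$, which exists by CRT.

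For well-definedness I would show $1/(a'b)\sim_{\Gamma_0(N)}1/(a''b)$ whenever $b\mid N$, $(a',N/b)=(a'',N/b)=1$, and $a''\equiv a'\pmod{(b,N/b)}$, by writing down an explicit $\gamma\in\Gamma_0(N)$ with $\gamma\cdot\tfrac1{a'b}=\tfrac1{a''b}$. Demanding that the image have numerator $1$ and denominator $a''b$ forces $\alpha=1-\beta a'b$ and $\gamma_0=b\bigl(a'(1-\delta)+(a''-a')\bigr)$, whereupon the determinant condition reduces to $\beta a''b=\delta-1$. One then chooses $\delta$ by CRT so that $\delta\equiv1+\overline{a'}(a''-a')\pmod{N/b}$ (which makes $N\mid\gamma_0$, using $(a',N/b)=1$) and $\delta\equiv1\pmod{a''b}$; these are compatible because $(N/b,a''b)=(N/b,b)=(b,N/b)$ and the hypothesis $a''\equiv a'\pmod{(b,N/b)}$ forces both congruences to read $\delta\equiv1$ there. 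Then $\beta=(\delta-1)/(a''b)$ and $\alpha=1-\beta a'b$ give an element of $\Gamma_0(N)$, and a short computation confirms it maps $\tfrac1{a'b}$ to $\tfrac1{a''b}$. (The last sentence of the lemma is then just the CRT statement that the class of $a$ modulo $(b,N/b)$ contains an element coprime to $N$.)

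For injectivity---the step carrying the real content---suppose $\gamma\cdot\tfrac1{ab}=\tfrac1{a'b'}$ with $b,b'\mid N$ and $(a,N)=(a',N)=1$. Since $\mathrm{SL}_2(\mathbb{Z})$ preserves primitive column vectors, after replacing $\gamma$ by $-\gamma$ if necessary we get $\gamma\begin{pmatrix}1\\ab\end{pmatrix}=\begin{pmatrix}1\\a'b'\end{pmatrix}$, i.e.\ $\alpha+\beta ab=1$ and $\gamma_0+\delta ab=a'b'$. The first relation gives $\alpha\equiv1\pmod b$, which together with $\alpha\delta\equiv1\pmod N$ (hence also modulo $b$) forces $\delta\equiv1\pmod b$ and $(\delta,N)=1$. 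Reducing the second relation modulo $N$ (where $N\mid\gamma_0$) gives $a'b'\equiv\delta ab\pmod N$, and taking the gcd with $N$, using $(a,N)=(a',N)=(\delta,N)=1$, yields $b'=(a'b',N)=(\delta ab,N)=b$. Then $a'b\equiv\delta ab\pmod N$ gives $a'\equiv\delta a\pmod{N/b}$, hence $a'\equiv\delta a\pmod{(b,N/b)}$; and since $\delta\equiv1\pmod b$ implies $\delta\equiv1\pmod{(b,N/b)}$, we conclude $a'\equiv a\pmod{(b,N/b)}$. Thus the listed representatives are pairwise inequivalent, and with the previous two steps they form a complete set of cusp representatives. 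The main obstacle is the gcd bookkeeping of the injectivity argument---identifying $b=(ab,N)$ as the genuine invariant and then pinning the residue of $a$ to exactly the modulus $(b,N/b)$---together with the CRT compatibility check in the well-definedness step; the reduction to numerator $1$ and the concluding appeal to CRT are routine.
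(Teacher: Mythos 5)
The paper itself offers no proof of this lemma: it is quoted directly as Corollary~3.2 of Kiral--Young \cite{KY}, with the statement about choosing $(a,N)=1$ being Remark~3.3 there. So there is no in-paper argument to compare against, and your proposal should be judged on its own.

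Your proof is correct, and it is essentially the argument one would extract from \cite{KY} (which proceeds by analyzing when two vectors $\begin{pmatrix}1\\ ab\end{pmatrix}$ lie in the same $\Gamma_0(N)$-orbit). The three checks are all sound: the reduction of an arbitrary $a/c$ to $1/c'$ and then to $1/(a'b)$ with $b=(c',N)$, $(a',N/b)=1$; the explicit $\gamma\in\Gamma_0(N)$ carrying $1/(a'b)$ to $1/(a''b)$ when $a'\equiv a''\pmod{(b,N/b)}$, built via CRT on $\delta$ with the key compatibility modulus $(N/b, a''b)=(b,N/b)$; and the invariance of $b=(ab,N)$ and of $a\bmod(b,N/b)$ obtained by reading the two matrix rows modulo $b$ and modulo $N$ and using $\delta\equiv 1\pmod b$.

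One point worth making explicit, since the lemma's wording is slightly loose: your well-definedness is proved under the hypothesis $(a',N/b)=(a'',N/b)=1$, which is strictly stronger than the condition $(a',(b,N/b))=1$ appearing in the statement. This is not a gap but a necessity --- coprimality to $(b,N/b)$ alone does \emph{not} make the cusp $1/(ab)$ depend only on $a\bmod(b,N/b)$. For example, with $N=12$, $b=4$, $(b,N/b)=1$, the values $a=1$ and $a=3$ lie in the same class modulo $1$ yet $1/4$ and $1/12$ are inequivalent cusps of $\Gamma_0(12)$ (the $b$-invariant of $1/12$ is $12$, not $4$). The lemma's final sentence, ``we may choose the $a$ so that $(a,N)=1$,'' is therefore essential to make the parametrization unambiguous, and your proof uses exactly the right amount of coprimality at each step: surjectivity produces $a'$ with $(a',N/b)=1$, well-definedness is invoked only in that regime, and injectivity assumes $(a,N)=(a',N)=1$. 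It would improve the write-up to flag this explicitly, so a reader does not mistake the hypothesis strengthening for a hidden lacuna.

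One small presentational remark: in the surjectivity step the assertion that the row $(\alpha,\beta)$ with $(\alpha,\beta)=1$ and $(\alpha,N)=1$ extends to $\gamma\in\Gamma_0(N)$ deserves a line --- choose $\delta$ with $\alpha\delta\equiv1\pmod{\beta N}$ (possible since $(\alpha,\beta N)=1$), set $\gamma_0=(\alpha\delta-1)/\beta$, and treat $\beta=0$ separately. You implicitly use this, and stating it makes the CRT bookkeeping self-contained.
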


We need an explicit expression for the Fourier coefficients of Eisenstein series. Theorem 3.4 of Kiral and Young's work \cite{KY} leads to the following Lemma.

\begin{lem}\label{lem:KiralYounglem}
Let $\mathfrak{c}$ be a cusp of $\Gamma_0(N)$, and let $\mathfrak{c}=(ab)^{-1}$ be its representative given by Lemma~\ref{lem:KiralYoungcor}. Suppose that
\begin{equation*}
b=b_0b',
\end{equation*}
where $b_0$ is the largest factor of $b$ that is relatively prime to $N/b$.
\begin{enumerate}
\item[\upshape{(a)}] If $p\nmid N$, then $\varphi_{\mathfrak{c}} (p,t)=0$ unless
\begin{equation}\label{eqn: factorb}
b' = (b,N/b),
\end{equation}
in which case
\begin{align*}
\varphi_{\mathfrak{c}} (p, t)
& = \frac{\pi^{\frac{1}{2}+it}}{\Gamma(\frac{1}{2}+it)} \frac{\mu(b_0)}{(Nb_0)^{\frac{1}{2}+it}} p^{-\frac{1}{2}-it} \frac{1}{\varphi( b' )}  \sum_{\chi \bmod b' }  \frac{\chi(-\overline{p}\overline{b_0}a) \tau(\overline{\chi}) }{L(1+2it,\overline{\chi^2}\chi_0)} \\
& \ \ \ \ + \frac{\pi^{\frac{1}{2}+it}}{\Gamma(\frac{1}{2}+it)} \frac{\mu(b_0)}{(Nb_0)^{\frac{1}{2}+it}}p^{-\frac{1}{2}+it}\frac{1}{\varphi( b' )}  \sum_{\chi \bmod b' }  \frac{\chi(-p\overline{b_0}a) \tau(\overline{\chi}) }{L(1+2it,\overline{\chi^2}\chi_0)},
\end{align*}
where $\varphi(b')$ is the Euler totient function, $\tau(\overline{\chi})$ is the Gauss sum, and $\chi_0$ is the principal character modulo $N/b$. Also, if $e$ is a positive integer and \eqref{eqn: factorb} holds, then
\begin{align*}
\varphi_{\mathfrak{c}} (e^2, t) =
& \frac{\pi^{\frac{1}{2}+it}}{\Gamma(\frac{1}{2}+it)} e^{-1+2it} \frac{1}{(Nb_0)^{\frac{1}{2}+it}}  S(\tfrac{e^2}{(e^2,b')},0;b_0) \\
& \times \sum_{\substack{d|e^2 \\ (d,N/b)=1}} d^{-2it} \frac{1}{\varphi(\frac{b'}{(e^2,b')})} \sum_{\chi \bmod \frac{b'}{(e^2,b')} }  \frac{\chi(-\tfrac{e^2}{(e^2,b')}\overline{b_0d^2}a) \tau(\overline{\chi}) }{L(1+2it,\overline{\chi^2}\chi_0)}. 
\end{align*}
\item[\upshape{(b)}] If $p|N$, then
\begin{align*}
\varphi_{\mathfrak{c}} (p, t) \ll_{\epsilon} \frac{b'(p,b_0)}{|\Gamma(\frac{1}{2}+it)|(pNb )^{ 1/2}}(N(1+|t|))^{\epsilon}.
\end{align*}
\item[\upshape{(c)}] If $e$ is a positive integer, then
\begin{align*}
\varphi_{\mathfrak{c}} (e^2, t) \ll_{\epsilon} \frac{b'e^{1+\epsilon}}{|\Gamma(\frac{1}{2}+it)|(Nb)^{\frac{1}{2} }}  (N(1+|t|))^{\epsilon}.
\end{align*}
\end{enumerate}
The bounds in (b) and (c) hold unconditionally with ineffective implied constants, or conditionally on GRH with effective implied constants.
\end{lem}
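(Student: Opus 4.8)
The plan is to derive all three parts by specializing and then estimating Theorem~3.4 of Kiral and Young~\cite{KY}, which gives a closed form for $\varphi_{\mathfrak{c}}(n,t)$ for an arbitrary positive integer $n$ and an arbitrary cusp $\mathfrak{c}=(ab)^{-1}$ of $\Gamma_0(N)$ (with $b=b_0b'$ as in the statement). In that formula the $t$-dependence is carried by the archimedean factor $\pi^{1/2+it}/\Gamma(1/2+it)$, a power $(Nb_0)^{-1/2-it}$, and an inverse Dirichlet $L$-value of the shape $L(1+2it,\overline{\chi^2}\chi_0)^{-1}$ summed against Gauss sums $\tau(\overline\chi)$ over characters $\chi$ to a modulus dividing $b'$, while the $n$-dependence sits in an arithmetic factor that is multiplicative in $n$. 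First I would record this formula and then plug in $n=p$ and $n=e^2$.

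For part~(a) with $n=p$ coprime to $N$, the $n$-local factor in the Kiral--Young formula is a Ramanujan-type sum to a modulus built from $N$ and $b_0$, evaluated at $p$; opening it into additive characters and using multiplicativity shows it vanishes unless the ramified part $b'$ of $b$ equals $(b,N/b)$, that is, unless \eqref{eqn: factorb} holds, and otherwise it splits into the two-term shape $p^{-1/2-it}$ (respectively $p^{-1/2+it}$) weighted by $\tfrac1{\varphi(b')}\sum_{\chi\bmod b'}\chi(-\overline p\,\overline{b_0}a)\tau(\overline\chi)L(1+2it,\overline{\chi^2}\chi_0)^{-1}$ (respectively with $p$ in place of $\overline p$), which is precisely the displayed expression. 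For $n=e^2$ I would carry out the same substitution, but now the local factor at $e^2$ must be separated according to the interaction of $e^2$ with $N/b$: the divisors $d\mid e^2$ with $(d,N/b)=1$ contribute $\sum d^{-2it}$ with a character twist $\chi(-\tfrac{e^2}{(e^2,b')}\overline{b_0d^2}a)$, whereas the part of $e^2$ sharing prime factors with $N/b$ collapses into the Ramanujan sum $S(e^2/(e^2,b'),0;b_0)$ and shrinks the character modulus from $b'$ to $b'/(e^2,b')$; reassembling these pieces gives the stated formula for $\varphi_{\mathfrak{c}}(e^2,t)$.

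For parts~(b) and~(c) I would estimate the Kiral--Young formulas trivially: $|\mu(b_0)|\le 1$, $\varphi(m)\gg m^{1-\epsilon}$, $|\tau(\overline\chi)|\le\sqrt{b'}$, the Ramanujan/Kloosterman sum $S(e^2/(e^2,b'),0;b_0)\ll (b_0,e^2)\,b_0^{\epsilon}$ (and $\ll(p,b_0)$ in the $p\mid N$ case), the divisor bound $\tau(e^2)\ll e^{\epsilon}$, and the number of characters modulo $b'$ being at most $b'$. The only non-elementary input is $L(1+2it,\overline{\chi^2}\chi_0)^{-1}\ll (N(1+|t|))^{\epsilon}$: under GRH this holds with an effective implied constant, while unconditionally it still holds with an ineffective constant by combining the classical zero-free region for Dirichlet $L$-functions (which handles the range where $|t|$ is not too small and disposes of all complex and all principal $\overline{\chi^2}\chi_0$, the latter being a shifted zeta factor whose edge pole only helps) with Siegel's theorem (which handles a possible exceptional real zero of a real character $\overline{\chi^2}\chi_0$ when $|t|$ is small). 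Inserting these estimates into the formulas from part~(a) and summing the two terms yields the claimed bounds, with exactly the stated dichotomy between the GRH and unconditional cases.

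The main obstacle is the combinatorial bookkeeping in the $e^2$ case of part~(a): one must track precisely how the general Kiral--Young local factor at $n=e^2$ decomposes according to the coprimality of divisors of $e^2$ with $N/b$, confirm that the ``ramified'' contribution is exactly $S(e^2/(e^2,b'),0;b_0)$ with the character sum cut down to modulus $b'/(e^2,b')$, and match every twisting argument $\chi(-\tfrac{e^2}{(e^2,b')}\overline{b_0d^2}a)$ and every factor $d^{-2it}$ against the general formula. Everything else is either a direct substitution into Theorem~3.4 of~\cite{KY} or a routine trivial estimate.
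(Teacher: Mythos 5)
Your proposal follows essentially the same approach as the paper: specialize Theorem~3.4 of Kiral--Young at $\mathfrak{a}=1/N$, substitute $n=p$ and $n=e^2$, evaluate the Ramanujan sums via Kluyver's formula, and then bound parts~(b) and~(c) using the elementary estimates for Gauss sums, Ramanujan sums, and divisors together with $L(1+2it,\overline{\chi^2}\chi_0)^{-1}\ll (N(1+|t|))^{\epsilon}$ from GRH or Siegel. The one cosmetic difference is that the vanishing condition $b'=(b,N/b)$ is obtained in the paper directly from the explicit divisibility constraint $n=\frac{b'}{(b,N/b)}m$ stated in Theorem~3.4 of~\cite{KY} (which forces $\frac{b'}{(b,N/b)}\mid p$, hence equals $1$ when $p\nmid N$), rather than by re-expanding a Ramanujan sum into additive characters as you describe; this is a shortcut worth using but does not change the substance of the argument.
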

\begin{proof}
Theorem~3.4 of Kiral and Young~\cite{KY} with $\mathfrak{a}=1/N$ implies that $\varphi_{\mathfrak{c}} (n,t)$ is zero unless
\begin{equation}\label{eqn: KYThm3.4condition}
n = \frac{b'}{(b,N/b)} m
\end{equation}
for some integer $m$. Combining \eqref{eqn: KYThm3.4condition} and Equation (3.3) in \cite{KY}, we have
\begin{align}
\varphi_{\mathfrak{c}} (n, t) =
& \frac{\pi^{\frac{1}{2}+it}}{\Gamma(\frac{1}{2}+it)} n^{-\frac{1}{2}+it} \frac{(b,N/b)^{\frac{1}{2}+it}}{(Nb)^{\frac{1}{2}+it}}  \frac{b'}{(b,N/b)} S(\tfrac{m}{(m,(b,N/b))},0;b_0) \notag\\
& \times \sum_{\substack{d|m \\ (d,N/b)=1}} d^{-2it} \frac{1}{\varphi(\frac{(b,N/b)}{(m,(b,N/b))})} \sum_{\chi \bmod \frac{(b,N/b)}{(m,(b,N/b))} }  \frac{\chi(-\tfrac{m}{(m,(b,N/b))}\overline{b_0d^2}a) \tau(\overline{\chi}) }{L(1+2it,\overline{\chi^2}\chi_0)}, \label{eqn: KYThm3.4}
\end{align}
where $\chi_0$ is the principal character modulo $N/b$.

If $n=p$ with $p\nmid N$, then we must have $m=p$ in \eqref{eqn: KYThm3.4condition} because $b' | N.$ Thus $\varphi_{\mathfrak{c}}(p,t)=0$ unless \eqref{eqn: factorb} holds. In this case, $m=p$, $(m,(b,N/b))=1$ since $p\nmid N$, and
$$
S(\tfrac{m}{(m,(b,N/b))},0;b_0) = S(p,0;b_0)=\mu(b_0)
$$
by Kluyver's formula for the Ramanujan sum, and so \eqref{eqn: KYThm3.4} simplifies to give the expression for $\varphi_{\mathfrak{c}}(p,t)$ in (a). Similarly, if $n=e^2$ and \eqref{eqn: factorb} holds, then in \eqref{eqn: KYThm3.4} we have $m=e^2$ and $(m,(b,N/b))=(e^2,b')$, and \eqref{eqn: KYThm3.4} simplifies to the expression for $\varphi_{\mathfrak{c}}(e^2,t)$ in (a).

To prove (b), we may assume that \eqref{eqn: KYThm3.4condition} holds with $n=p$, since otherwise $\varphi_{\mathfrak{c}}(p,t)=0$ by Theorem~3.4 of \cite{KY}. This means either $m=p$ or $m=1$. If $m=p$, then \eqref{eqn: factorb} holds, and \eqref{eqn: KYThm3.4} with $n=p$ simplifies to
\begin{align*}
\varphi_{\mathfrak{c}} (p, t) =
& \frac{\pi^{\frac{1}{2}+it}}{\Gamma(\frac{1}{2}+it)} p^{-\frac{1}{2}+it} \frac{(b')^{\frac{1}{2}+it}}{(Nb)^{\frac{1}{2}+it}}  S(\tfrac{p}{(p,b')},0;b_0)  \frac{1}{\varphi(\frac{b'}{(p,b')})} \sum_{\chi \bmod \frac{b'}{(p,b')} }  \frac{\chi(-\tfrac{p}{(p,b')}\overline{b_0}a) \tau(\overline{\chi}) }{L(1+2it,\overline{\chi^2}\chi_0)} \\
& + \mathbf{1}_{(p,N/b)=1} \frac{\pi^{\frac{1}{2}+it}}{\Gamma(\frac{1}{2}+it)} p^{-\frac{1}{2}-it} \frac{(b')^{\frac{1}{2}+it}}{(Nb)^{\frac{1}{2}+it}}  S(p,0;b_0)  \frac{1}{\varphi(b')} \sum_{\chi \bmod b' }  \frac{\chi(-\overline{pb_0}a) \tau(\overline{\chi}) }{L(1+2it,\overline{\chi^2}\chi_0)},
\end{align*}
where $\mathbf{1}_{(p,N/b)=1}$ is $1$ if $(p,N/b)=1$ and is $0$ otherwise. We will use the following bounds to bound $\varphi_{\mathfrak {c}}(p, t).$  In fact, these bounds will also be useful later. 
\es{ \label{eqn:variousboundsforSection7}
S(\alpha,0;\gamma)&\ll (\alpha,\gamma) \ll \alpha; \\
\tau(\chi) &\ll \sqrt {c} , \ \ \ \ \ \ \textrm{where} \ \ \chi \  \textrm{is a character mod} \ c; \\
\sum_{d | \ell} 1 & \ll \ell^{\epsilon} ; \\
\frac{1}{L(1+2it,\overline{\chi^2}\chi_0)} &\ll (N(1+|t|))^{\epsilon}.
}
The first bound follows from the well-known bound for Ramanujan's sum, while the second bound is a well-known bound for the Gauss sum (see, e.g., \cite{Da}) . The last one follows from GRH (or Siegel's theorem, with an ineffective implied constant) (see, e.g., \cite{Da}). Using \eqref{eqn:variousboundsforSection7}, we deduce (b) for the case $m=p$. On the other hand, if $m=1$, then $(m,(b,N/b))=1$ and \eqref{eqn: KYThm3.4} simplifies to
\begin{align*}
\varphi_{\mathfrak{c}} (p, t) =
& \frac{\pi^{\frac{1}{2}+it}}{\Gamma(\frac{1}{2}+it)} p^{-\frac{1}{2}+it} \frac{(b,N/b)^{\frac{1}{2}+it}}{(Nb)^{\frac{1}{2}+it}}  \frac{b'}{(b,N/b)}  \frac{S(1,0;b_0)}{\varphi((b,N/b))} \sum_{\chi \bmod (b,N/b) }  \frac{\chi(-\overline{b_0}a) \tau(\overline{\chi}) }{L(1+2it,\overline{\chi^2}\chi_0)}.
\end{align*}
From this, the formula $S(1,0;b_0)=\mu(b_0)$ for Ramanujan's sum, and the bounds in \eqref{eqn:variousboundsforSection7}, we deduce that
\begin{align*}
\varphi_{\mathfrak{c}} (p, t) \ll_{\epsilon} \frac{b'}{|\Gamma(\frac{1}{2}+it)|(pNb )^{ 1/2}}(N(1+|t|))^{\epsilon},
\end{align*}
which implies (b).

To prove (c), take $n=e^2$ in \eqref{eqn: KYThm3.4} and use the bounds \eqref{eqn:variousboundsforSection7}. 

\end{proof}

For brevity, let $N = cL_1rd$. Proposition~\ref{prop:Ctn} will immediately follow from the following bounds. The first one is the contribution from $p | N$, and the second Proposition is the contribution 
from $p \nmid N.$

\begin{prop}\label{prop: p|N} Let
\begin{align*} 
\mathcal {CTN}_{p |N} &:= \sum_{c\geq 1} \sum_{p | N} \frac{ \log p}{\sqrt{p}} V\bfrac{p}{P}  \e{v\frac{p}{P}} \sum_{\mathfrak{c}} \int_{-\infty}^{\infty}  \frac{\sqrt{pe^2}}{\cosh(\pi t)} \overline{\varphi_{\mathfrak{c}}} (e^2, t)  \varphi_{\mathfrak{c} } (p, t) h_+ (t) \,dt.
\end{align*}
Then 
$$ \mathcal {CTN}_{p|N} \ll   (1+|u|)^2  Q^{\epsilon}  \frac{\sqrt{P}}{Q}.$$
\end{prop}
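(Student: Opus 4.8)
The plan is to move absolute values inside the sums over $c$, over $p\mid N$, over the cusps, and the $t$-integral, and then estimate everything pointwise using the Kiral--Young bounds of Lemma~\ref{lem:KiralYounglem}. Since $p\mid N$, parts (b) and (c) of that lemma give, for a cusp $\mathfrak{c}=(ab)^{-1}$ with $b=b_0b'$ as there,
$$|\varphi_{\mathfrak{c}}(p,t)|\ll_\epsilon \frac{b'(p,b_0)}{|\Gamma(\tfrac12+it)|\,(pNb)^{1/2}}(N(1+|t|))^\epsilon,\qquad |\varphi_{\mathfrak{c}}(e^2,t)|\ll_\epsilon \frac{b'e^{1+\epsilon}}{|\Gamma(\tfrac12+it)|\,(Nb)^{1/2}}(N(1+|t|))^\epsilon.$$
Multiplying these, writing $\sqrt{pe^2}=e\sqrt p$, and using the reflection-formula identity $|\Gamma(\tfrac12+it)|^{-2}=\pi^{-1}\cosh(\pi t)$, the factor $1/\cosh(\pi t)$ cancels and the two powers of $p^{1/2}$ cancel against each other, leaving
$$\frac{\sqrt{pe^2}}{\cosh(\pi t)}\bigl|\varphi_{\mathfrak{c}}(e^2,t)\,\varphi_{\mathfrak{c}}(p,t)\bigr|\ll \frac{e^{2+\epsilon}(b')^2(p,b_0)}{Nb}\,(N(1+|t|))^{2\epsilon}.$$

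Next I would sum over cusps via Lemma~\ref{lem:KiralYoungcor}: for fixed $b\mid N$ there are $\varphi((b,N/b))\le(b,N/b)$ admissible $a$, and the bound above is independent of $a$, so that $\sum_{\mathfrak{c}}\frac{(b')^2(p,b_0)}{Nb}\le\frac{p}{N}\sum_{b\mid N}\frac{(b,N/b)(b')^2}{b}\ll\frac{p}{N}\cdot N^{1+\epsilon}\ll PN^\epsilon$, where I use $(p,b_0)\le p$ and a routine multiplicativity computation bounding the local factor of $\sum_{b\mid N}\frac{(b,N/b)(b')^2}{b}$ at $\ell^a\|N$ by $\ll(a+1)\ell^a$. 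For the $t$-integral I would invoke Lemma~\ref{lem:boundforhu}(1). The point that makes the argument work is that $X=\frac{4\pi L_1L_2\sqrt{Pe^2}}{cQ}$ is forced to be tiny here: since $p\mid N=cL_1rd$ with $p\asymp P$, either $p\mid c$ (whence $c\ge p\gg P$) or $p\mid L_1L_2$ (whence $P\ll\mathcal L_0=(\log Q)^6$), and in either case $X\ll Q^{-1+\epsilon}\ll1$. Hence $\min\{X^{k-1},X^{-1/2}\}=X^{k-1}$, and splitting the integral at $|t|=F<(|u|+1)(1+X)\ll1+|u|$ gives $\int_{-\infty}^\infty(1+|t|)^{2\epsilon}|h_+(t)|\,dt\ll Q^\epsilon(1+|u|)^\epsilon X^{k-1}$.

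Assembling these estimates yields $|\mathcal{CTN}_{p|N}|\ll Q^\epsilon(1+|u|)^\epsilon e^{2+\epsilon}\sum_{c\ge1}X^{k-1}\sum_{p\mid N,\ p\asymp P}\frac{P\log p}{\sqrt p}$, and the inner sum is $\ll\sqrt P(\log Q)^2$ since $N$ has $\ll\log Q$ prime divisors of size $\asymp P$ (the tail of the $c$-sum being harmless). In the case $p\mid c$ one has $c\gg P$, so $\sum_{c\gg P}X^{k-1}\ll(L_1L_2e)^{k-1}Q^{1-k}P^{(3-k)/2}$; in the case $P\ll(\log Q)^6$ one has $\sum_{c\ge1}X^{k-1}\ll(L_1L_2e)^{k-1}Q^{1-k}P^{(k-1)/2}\ll(L_1L_2e)^{k-1}Q^{1-k}(\log Q)^{O(1)}$. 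Using $k\ge3$, $P\ll Q^{4-\delta}$, $L_1L_2,e\ll Q^\epsilon$, and $E=(\log Q)^3$, both contributions are $\ll Q^\epsilon\sqrt P/Q^2$, which is in particular $\ll(1+|u|)^2 Q^\epsilon\sqrt P/Q$, as claimed.

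The one place I expect to require real care is this final assembly for large weight $k$: the naive bound $\sum_{c\ge1}X^{k-1}\ll(L_1L_2e)^{k-1}Q^{1-k}P^{(k-1)/2}$ by itself does not obviously beat $\sqrt P/Q$, so one must genuinely exploit the restriction imposed by $p\mid N$ — namely that $c$ is pinned to the range $c\gg P$, equivalently that $X$ is tiny — in order to absorb both the factor $P^{1/2}$ coming from $(p,b_0)\le p$ and the weight $\log p/\sqrt p$. The remaining ingredients — the $\cosh$--$\Gamma$ cancellation, the count of cusps, the power-saving in the $h_+$ transform, and the geometric $c$-sum — are all routine given Lemmas~\ref{lem:KiralYounglem} and~\ref{lem:boundforhu}.
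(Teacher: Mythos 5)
Your proof is correct, but it takes a longer road than necessary because of one suboptimal choice. After multiplying the Kiral--Young bounds and using $\Gamma(\tfrac12+it)\Gamma(\tfrac12-it)=\pi/\cosh(\pi t)$, you arrive (as the paper does) at the factor $\frac{(b')^2(p,b_0)}{Nb}$, but you then bound $(p,b_0)\le p$, which costs a full factor of $P$. The paper instead uses $(p,b_0)\le b_0$ together with $b=b_0b'$ to get $\frac{(b')^2(p,b_0)}{b}\le b$; summing the $\varphi\big((b,N/b)\big)\le N/b$ cusps over $b\mid N$ then yields merely a factor $\ll\tau(N)\ll N^\epsilon$, and the bound $\sum_{b\mid N}\sum_{p\mid N}\frac{\log p}{\sqrt p}V(p/P)\ll N^\epsilon$ is harmless. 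After that the $c$-sum is handled exactly as you do, via Lemma~\ref{lem:boundforhu}(1) and $\sum_c\min\{X^{k-1},X^{-1/2}\}(1+X)^{1+\epsilon}\ll Q^\epsilon\sqrt P/Q$, with no need to distinguish whether $X$ is small.

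Because you carry the extra factor of $P$, you are forced into the secondary observation that $p\mid N=cL_1rd$ with $p\asymp P$ pins either $c\gg P$ or $P\ll(\log Q)^{O(1)}$, and hence $X\ll Q^{-1+\epsilon}$. That observation is correct and your case analysis does close the gap, but the exposition at the end is a bit loose: the constraint $p\mid N$ couples the $p$- and $c$-sums, so one should swap the order and split into $\sum_{p\mid L_1rd}\sum_c$ and $\sum_{p\nmid L_1rd}\sum_{c\equiv0\ (p)}$ before estimating, rather than first bounding the inner $p$-sum uniformly in $c$ and then separately restricting $c\gg P$. When organized that way the arithmetic works (for instance, at $k=3$ the second piece is $\ll\sum_{p\asymp P}\frac{\log p}{\sqrt p}\cdot P\cdot\big(\tfrac{L_1L_2e\sqrt P}{pQ}\big)^2\ll Q^\epsilon\sqrt P/Q^2$, and for larger $k$ the $c$-sum decays even faster). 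So the net effect is a correct but roundabout argument; using $(p,b_0)\le b_0$ at the outset, as the paper does, eliminates the extra $P$ from the start and makes the whole case analysis unnecessary.
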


\begin{prop} \label{prop:contpnotdivideN} Let 
\begin{align*}
\mathcal {CTN}_{p \nmid N} &:= \sum_{c \geq 1}\sum_{p \nmid N} \frac{ \log p}{\sqrt{p}} V\bfrac{p}{P}  \e{v\frac{p}{P}} \sum_{\mathfrak{c}} \int_{-\infty}^{\infty}  \frac{\sqrt{pe^2}}{\cosh(\pi t)} \overline{\varphi_{\mathfrak{c}}} (e^2, t)  \varphi_{\mathfrak{c} } (p, t) h_+ (t) \,dt .
\end{align*}
Then
$$ \mathcal {CTN}_{p \nmid N} \ll (1 + |v|)^2(1 + |u|)^2  Q^{\epsilon} \left(P^{\frac 14 + \epsilon} + \frac{\sqrt P}{Q}\right). $$
\end{prop}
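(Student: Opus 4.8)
The plan is to substitute the Kiral--Young formulas from Lemma~\ref{lem:KiralYounglem}(a) for $\varphi_{\mathfrak{c}}(p,t)$ (the $p\nmid N$ case) and for $\varphi_{\mathfrak{c}}(e^2,t)$, which turns $\mathcal{CTN}_{p\nmid N}$ into a finite sum over the cusp representatives $\mathfrak{c}=(ab)^{-1}$ of Lemma~\ref{lem:KiralYoungcor} --- with $b\mid N$, $a\bmod(b,N/b)$, and only those $b$ with $b'=(b,N/b)$ surviving --- of terms that, for each fixed cusp and each $c$, reduce to a short sum over Dirichlet characters modulo $b'$ and modulo $b'/(e^2,b')$ and a divisor sum over $d\mid e^2$ of
\[
\int_{\mathbb{R}}\frac{\sqrt{pe^2}}{\cosh(\pi t)}\,\overline{\varphi_{\mathfrak{c}}}(e^2,t)\,\varphi_{\mathfrak{c}}(p,t)\,h_+(t)\,dt
\]
weighted against $\sum_{p\nmid N}\frac{\log p}{\sqrt p}V(p/P)e(vp/P)$. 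First I would simplify: the two $\Gamma(\tfrac12\pm it)$ factors together with $1/\cosh(\pi t)$ collapse to a constant since $|\Gamma(\tfrac12+it)|^2=\pi/\cosh(\pi t)$, the $(Nb_0)^{\pm it}$ phases cancel between the two Fourier coefficients, and the Gauss sums, Ramanujan sums and reciprocal $L$-values obey the crude bounds in \eqref{eqn:variousboundsforSection7} under GRH. After these reductions the surviving sum over primes is, up to bounded factors, $\sum_{p\nmid N}\frac{\log p}{p^{1/2\pm it}}\chi(p)^{\mp1}V(p/P)e(vp/P)$, and the argument branches according to whether the character $\chi$ produced by $\varphi_{\mathfrak{c}}(p,t)$ is principal; one must keep in mind that the cusp data depends on $c$ through $N=cL_1rd$.

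When $\chi$ is non-principal I would bound this prime sum directly by Lemma~\ref{lem:CLee3.5} with $X=P$ and $\Psi(x)=V(x)e(vx)$ (whose derivatives satisfy the required bounds with $A_m\ll(1+|v|)^m$), obtaining $\ll(1+|v|)^3\log^{1+\epsilon}(P)\log(N+|t|)$; then I would insert the Eisenstein coefficient bound Lemma~\ref{lem:KiralYounglem}(c) for $\varphi_{\mathfrak{c}}(e^2,t)$, integrate in $t$ using the rapid decay of $h_+$ from Lemma~\ref{lem:boundforhu}(1), and sum over the (finitely many) cusps and over $c$. The $c$-sum is controlled exactly as in \S\ref{sec:dispart}: the factor $\min\{X^{k-1},X^{-1/2}\}$ inside $h_+$ forces $\sum_c\frac1c\min\{X^{k-1},X^{-1/2}\}\ll Q^\epsilon$ (dominated by the transition range $c\asymp L_1L_2\sqrt{Pe^2}/Q$, using $k\ge 3$), while the arithmetic factor $1/(Nb_0)$ summed over the $\ll Q^\epsilon$ admissible cusps again contributes $\ll Q^\epsilon$; the outcome is comfortably within the bound claimed. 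This case is the continuous-spectrum analogue of the Maass-form treatment of \S\ref{sec:dispart}.

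When $\chi=\chi_0$ is principal the prime sum collapses to $\sum_{p\nmid N}\frac{\log p}{p^{1/2\pm it}}V(p/P)e(vp/P)$, which is genuinely of size $\asymp P^{1/2}$. I would evaluate it by Mellin inversion against the Mellin transform $\widetilde{\mathcal W}$ of $\mathcal W(x)=V(x)e(vx)$: writing the sum through $-\zeta'/\zeta$ and shifting the contour past the pole at $s=\tfrac12\mp it$, the residue yields the main term $\widetilde{\mathcal W}(\tfrac12\pm it)P^{1/2\pm it}$, while the shifted integral is $\ll P^\epsilon(1+|v|)^{O(1)}(1+|t|)^{O(1)}$ by GRH for $\zeta$, and the finitely many $p\mid N$ terms are $O(Q^\epsilon)$; these error contributions are absorbed exactly as in the non-principal case. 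What remains is $P^{1/2}$ times a spectral integral of the shape $\int_{\mathbb{R}}h_+(t)\,\widetilde{\mathcal W}(\tfrac12\pm it)\,P^{\pm it}\cdot(\text{slowly varying in }t)\,dt$ (times the arithmetic amplitudes $1/(Nb_0)$ and $1/\varphi(b')$ from the two coefficients). Here taking absolute values is fatal --- it only gives $\asymp\sqrt P$ --- and instead I would extract cancellation from this integral using the extra oscillation $P^{\pm it}$ together with the explicit transform structure of $h_+$ (write $h_+$ through its defining $J_{\pm2it}$-integral against $h_u(\xi)=J_{k-1}(\xi)W(\xi/X)e(u\xi)$ and interchange the $t$- and $\xi$-integrations), exploiting that $X=4\pi L_1L_2\sqrt{Pe^2}/(cQ)\ll\sqrt P$ for every $c\ge 1$: the residual $\xi$-integral then lies far from its stationary/transition regime and is forced to be small, and this saving, combined with the sum over $c$ and the arithmetic savings from the cusp sum, brings the $\sqrt P$ down to $\ll Q^\epsilon(1+|u|)^2(1+|v|)^2\,P^{1/4+\epsilon}$. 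Combining the two cases gives Proposition~\ref{prop:contpnotdivideN}.

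The main obstacle is precisely this principal-character term: the prime sum there is as large as $P^{1/2}\asymp Q^{2-\delta/2}$, so a genuine $P^{1/4}$ saving must be manufactured from the average over the spectral parameter $t$ (and the auxiliary average over $c$), and making that cancellation rigorous --- keeping control uniform in $u$, $v$ and in the cusp data $b,a$ --- is the technically heaviest point of \S\ref{sec:ctn}. Everything else (the Kiral--Young substitution, the $\Gamma$-factor collapse, the non-principal prime sums, the bookkeeping of $\sum_c$ and $\sum_{\mathfrak{c}}$) follows the pattern already set up for the discrete spectrum, and the resulting estimate, together with Proposition~\ref{prop: p|N}, completes the proof of Proposition~\ref{prop:Ctn}.
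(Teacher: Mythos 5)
Your reading of the structure (Kiral--Young substitution, $\Gamma$-factor collapse, split into $p^{-it}$ and $p^{+it}$ terms, then into principal and non-principal $\psi$) matches the paper's \S\ref{sec:ctn}, and your treatment of the non-principal case — Lemma~\ref{lem:CLee3.5} on the prime sum, then Lemma~\ref{lem:KiralYounglem}(c), the decay of $h_+$ from Lemma~\ref{lem:boundforhu}, and bookkeeping of $\sum_c$ and $\sum_{\mathfrak c}$ — is essentially identical to the paper's proof of Lemma~\ref{prop:nonprincipal}. Likewise your extraction of the main term $P^{1/2\mp it}\widetilde V_1(\tfrac12\mp it)$ via $-\zeta'/\zeta$ matches Lemma~\ref{lem:sumoverpforPrincipal}.

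Where you diverge is the crux of the principal-character term. You propose to feed $h_+(t)$ back through its defining $J_{\pm 2it}$-integral, swap the $t$- and $\xi$-integrations, and argue that the inner $t$-integral is small because $\xi\asymp X$ is far from $\sqrt{P}$, so the phase is non-stationary. This is the right \emph{intuition}, but it is asserted rather than carried out, and it does not explain where the exponent $1/4$ comes from. The paper's actual mechanism is cleaner and more explicit: substitute $s=\tfrac12-it$, so the $t$-integral becomes a Mellin-type integral $\int_{(1/2)}(\cdots)P^s\,ds$, and \emph{shift the contour to $\mathrm{Re}(s)=\tfrac14+\epsilon$}. Two ingredients make this legal and effective, and both are missing from your proposal. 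First, one needs to know that $h_+$ continues analytically off the real axis with the quantitative bound
\[
h_+(\gamma+i\delta)\;\ll\;(1+|u|)\,\min\bigl\{X^{\,k-1-2|\delta|},\,X^{-1/2}\bigr\},
\]
which is Lemma~\ref{lem:boundh+}; this is where the hypothesis $k\ge 3$ enters (so that $k-1-2\cdot\tfrac14>0$) and where the sum over $c$ can still be closed. Second, the shift cannot go past $\mathrm{Re}(s)=\tfrac14$ because the factor $L(2s,\chi^2\chi_0)^{-1}$ sits on the critical line there; the paper invokes the GRH bound $L(\tfrac12+\epsilon+it,\chi^2\chi_0)^{-1}\ll(q_0(1+|t|))^\epsilon$ (Chirre) to control it. This $L$-function barrier is precisely why the answer is $P^{1/4+\epsilon}$ and not smaller — a fact your oscillatory-integral heuristic does not see. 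Without Lemma~\ref{lem:boundh+} and the explicit recognition of the $1/4$-line obstruction, the principal-character estimate in your proposal is not established.

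So: the non-principal case and all the surrounding bookkeeping are correct and match the paper; the principal case as you describe it has a genuine gap in that the claimed $P^{1/4+\epsilon}$ is asserted rather than proved. The missing ideas are the analytic continuation bound for $h_+$ off the real axis (Lemma~\ref{lem:boundh+}) and the contour shift to $\mathrm{Re}(s)=\tfrac14+\epsilon$ bounded by GRH for $L(2s,\chi^2\chi_0)^{-1}$ (Lemma~\ref{lem:intovertforPrinc}).
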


\subsection{Proof of Proposition \ref{prop: p|N} -- Contribution from $p | N$}

We write each $\mathfrak{c}$ as $(ab)^{-1}$ as stated in Lemma~\ref{lem:KiralYoungcor} to express the sum $\mathcal{CTN}_{p |N}$ as
\begin{align*}
\sum_{c\geq 1} 
 \sum_{b|N}\sideset{}{^*}\sum_{a \bmod (b,N/b)}  \sum_{p| N} \frac{ \log p}{\sqrt{p}} V\bfrac{p}{P}  \e{v\frac{p}{P}} \int_{-\infty}^{\infty}  \frac{\sqrt{pe^2}}{\cosh(\pi t)} \overline{\varphi_{\mathfrak{c}}} (e^2, t)  \varphi_{\mathfrak{c} } (p, t) h_+ (t) \,dt.
\end{align*}
We apply Lemma~\ref{lem:KiralYounglem} (b) and (c) and the identity $\Gamma(\frac{1}{2}+it)\Gamma(\frac{1}{2}-it)=\pi/\cosh(\pi t)$ to deduce that
\begin{align*}
& \sum_{c\geq 1}\sum_{b|N}\sideset{}{^*}\sum_{a \bmod (b,N/b)}  \sum_{p| N} \frac{ \log p}{\sqrt{p}} V\bfrac{p}{P}  \e{v\frac{p}{P}} \int_{-\infty}^{\infty}  \frac{\sqrt{pe^2}}{\cosh(\pi t)} \overline{\varphi_{\mathfrak{c}}} (e^2, t)  \varphi_{\mathfrak{c} } (p, t) h_+ (t) \,dt \\
& \ll \sum_{c\geq 1}\frac{e^{2+\epsilon}}{N^{1-\epsilon}} \sum_{b|N} \sideset{}{^*}\sum_{a \bmod (b,N/b)}  \sum_{p| N} \frac{\log p}{\sqrt{p}}V\bfrac{p}{P}\frac{(b')^2(p,b_0)}{ b} \int_{-\infty}^{\infty}   (1+|t|)^{\epsilon} |h_+ (t)| \,dt.
\end{align*}
Since $b=b_0b'$, it follows that
\begin{align*}
\frac{(b')^2(p,b_0)}{ b} \leq  \frac{bb'b_0}{b} = b.
\end{align*}
Moreover, we have
\begin{align*}
\sideset{}{^*}\sum_{a \bmod (b,N/b)} 1 \leq (b,N/b) \leq \frac{N}{b}.
\end{align*}
Hence, the above is
\begin{align*}
& \ll \sum_{c\geq 1} N^{\epsilon}e^{2+\epsilon} \sum_{b|N}  \sum_{p| N} \frac{\log p}{\sqrt{p}}V\bfrac{p}{P} \int_{-\infty}^{\infty}   (1+|t|)^{\epsilon} |h_+ (t)| \,dt \\
& \ll \sum_{c\geq 1} N^{\epsilon}e^{2+\epsilon} \int_{-\infty}^{\infty}   (1+|t|)^{\epsilon} |h_+ (t)| \,dt.
\end{align*}
Next, we use the bound for $h_+(t)$ from Lemma~\ref{lem:boundforhu} and recall from \eqref{eqn: Xdefinition} that $X = \frac{4\pi L_1L_2 \sqrt{Pe^2}}{cQ}$. Thus, for some $F<(1+|u|)( 1 + X),$ we have
\es{\label{eqn:CTN_p|N_finalbound}
\mathcal{CTN}_{p |N} 
& \ll  e^{2+\epsilon}Q^{\epsilon} \sum_{c} \min\left\{ X^{k-1}, \frac{1}{\sqrt X} \right\} \int_{-\infty}^{\infty}    (1+|t|)^{\epsilon} \frac{1 + |\log X|}{F^{1 - \epsilon}} \left( \frac{F}{1 + |t|}\right)^2 \,dt \\
& \ll (1+|u|)^2  Q^{\epsilon} \frac{\sqrt{P}}{Q},
}
as desired.

\subsection{Proof of Proposition \ref{prop:contpnotdivideN} -- Contribution from $p \nmid N$}
By expressing $\mathfrak{c}$ as $(ab)^{-1}$ (see Lemma~\ref{lem:KiralYoungcor}), we can write  
\begin{align*}
\mathcal {CTN}_{p \nmid N} 
&= \sum_{c \geq 1} \sum_{b|N}\sideset{}{^*}\sum_{a \bmod (b,N/b)}  \sum_{p \nmid N} \frac{ \log p}{\sqrt{p}} V\bfrac{p}{P}  \e{v\frac{p}{P}} \int_{-\infty}^{\infty}  \frac{\sqrt{pe^2}}{\cosh(\pi t)} \overline{\varphi_{\mathfrak{c}}} (e^2, t)  \varphi_{\mathfrak{c} } (p, t) h_+ (t) \,dt.
\end{align*}
By Lemma~\ref{lem:KiralYounglem}(a), the sum over $p$ here is zero unless \eqref{eqn: factorb} holds. Writing explicit expressions for $\varphi_{\mathfrak c}(p, t)$ and $\varphi_{\mathfrak c}(e^2, t)$ from Lemma~\ref{lem:KiralYounglem} (a),  we obtain that
\begin{align*}
\mathcal {CTN}_{p \nmid N} = E_1+E_2,
\end{align*}
where
\begin{align*}
E_1 : = & \sum_{c\geq 1} \sum_{\substack{b|N\\ b' = (b, N/b)}} \sideset{}{^*}\sum_{a \bmod (b,N/b)}  \sum_{p \nmid N} \frac{ \log p}{\sqrt{p}} V\bfrac{p}{P}  \e{v\frac{p}{P}} \int_{-\infty}^{\infty}  \frac{\sqrt{pe^2}}{\cosh(\pi t)} \frac{\pi^{\frac{1}{2}-it}}{\Gamma(\frac{1}{2}-it)} e^{-1-2it} \frac{1}{(Nb_0)^{\frac{1}{2}-it}} \\
& \times \overline{S(\tfrac{e^2}{(e^2,b')},0;b_0)}  \sum_{\substack{d|e^2 \\ (d,N/b)=1}} d^{2it} \frac{1}{\varphi(\frac{b'}{(e^2,b')})} \sum_{\chi \bmod \frac{b'}{(e^2,b')} }  \frac{\overline{\chi}(-\tfrac{e^2}{(e^2,b')}\overline{b_0d^2}a)  \overline{\tau( \overline{\chi} )  }}{L(1-2it, \chi^2 \chi_0)} \\
& \times \frac{\pi^{\frac{1}{2}+it}}{\Gamma(\frac{1}{2}+it)} \frac{\mu(b_0)}{(Nb_0)^{\frac{1}{2}+it}} p^{-\frac{1}{2}-it} \frac{1}{\varphi( b' )}  \sum_{\psi \bmod b' }  \frac{\psi(-\overline{p}\overline{b_0}a) \tau(\overline{\psi}) }{L(1+2it,\overline{\psi^2}\chi_0)}  h_+ (t) \,dt
\end{align*}
and $E_2$ is the same sum except with $\psi(\overline{p})p^{-it}$ replaced with $\psi(p)p^{it}$. From now on, we will focus on $E_1$ as $E_2$ can be treated similarly. Since $\Gamma(\frac{1}{2}+it)\Gamma(\frac{1}{2}-it)=\pi/\cosh(\pi t)$, $E_1$ simplifies to
\begin{align*}
E_1 = & \sum_{c\geq 1}\sum_{\substack{b|N\\ b' = (b, N/b)}} \frac{1}{Nb_0}\sideset{}{^*}\sum_{a \bmod (b,N/b)}  \sum_{p \nmid N}  \frac{\log p}{\sqrt{p}} V\bfrac{p}{P}  \e{v\frac{p}{P}} \int_{-\infty}^{\infty}     e^{-2it}p^{-it} \sum_{\substack{d|e^2 \\ (d,N/b)=1}} d^{2it} \\
& \times \overline{S(\tfrac{e^2}{(e^2,b')},0;b_0)}   \frac{1}{\varphi(\frac{b'}{(e^2,b')})} \sum_{\chi \bmod \frac{b'}{(e^2,b')} }  \frac{\overline{\chi}(-\tfrac{e^2}{(e^2,b')}\overline{b_0d^2}a) \overline{\tau( \overline{\chi} )  }}{L(1-2it, \chi^2 \chi_0)} \\
& \times  \mu(b_0)  \frac{1}{\varphi( b' )}  \sum_{\psi \bmod b' }  \frac{\psi(-\overline{p}\overline{b_0}a) \tau(\overline{\psi}) }{L(1+2it,\overline{\psi^2}\chi_0)}  h_+ (t) \,dt \\
&= E_{princ} + E_{non-princ},
\end{align*}
where $E_{princ}$ is the contribution from the principal characters $\psi$, and $E_{non-princ}$ is the contribution from the non-principal characters $\psi$. Proposition~\ref{prop:contpnotdivideN} will follow immediately from the following two Lemmas.
\begin{lem}\label{prop:principal}
Let the notation be as above. Then
$$E_{princ} \ll (1 + |v|)^2(1 + |u|)^2  Q^{\epsilon} \left(P^{\frac 14 + \epsilon} + \frac{\sqrt P}{Q}\right) .$$

\end{lem}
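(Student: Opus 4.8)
The plan is to strip from the inner prime sum its genuine main term and then exploit the holomorphy of the Bessel transform $h_+$, shifting the spectral integral off the real line so that the oscillating factor $P^{\mp it}$ yields a saving --- this is the concrete mechanism behind ``averaging over the spectral parameter'' mentioned in the outline.

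First I would insert the principal-character piece of Lemma~\ref{lem:KiralYounglem}(a): when $\psi$ is the principal character $\bmod b'$ one has $\psi(-\overline p\,\overline{b_0}a)=1$, $\tau(\overline\psi)=\mu(b')$ and $L(1+2it,\overline{\psi^2}\chi_0)^{-1}=L(1+2it,\chi_0^{(N/b)})^{-1}$, so the sum over $p$ in $E_{princ}$ reduces to $\sum_{p\nmid N}\frac{\log p}{p^{1/2\pm it}}V(p/P)\e{vp/P}$ times quantities independent of $p$. Writing $\mathcal W(x)=V(x)\e{vx}$ and $\sum_{(p,N)=1}\frac{\log p}{p^{w}}=-\frac{\zeta'}{\zeta}(w)+H_N(w)$ with $H_N$ holomorphic and $\ll\log N$ for $\Re w>1/2$, I would represent this sum by a Mellin integral and move the contour past $w=1$, extracting the main term $\widetilde{\mathcal W}(1/2\mp it)\,P^{1/2\mp it}$ and a remainder $\mathcal R(t)$ which, by the GRH bound $\zeta'/\zeta(\tfrac12+\tfrac1{\log P}+i\tau)\ll\log P\,(\log(|\tau|+2))^{1+\epsilon}$ and the rapid decay of $\widetilde{\mathcal W}$, satisfies $\mathcal R(t)\ll Q^\epsilon(1+|v|)^{O(1)}(\log(|t|+2))^{1+\epsilon}$. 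Feeding $\mathcal R(t)$ back in, bounding $h_+$ by Lemma~\ref{lem:boundforhu}(1) and summing over $c$ as in the treatment of $S_1$ in \S\ref{sec:dispart} (splitting at $X\asymp1$, where $X=X(c)=4\pi L_1L_2\sqrt{Pe^2}/(cQ)$), this part contributes $\ll Q^\epsilon(1+|u|)^\epsilon(1+|v|)^{O(1)}\sqrt P/Q$, which is acceptable.

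The heart of the matter is the main-term piece. After using $\Gamma(\tfrac12+it)\Gamma(\tfrac12-it)=\pi/\cosh(\pi t)$ to cancel the $1/\cosh(\pi t)$ in Kuznetsov's Eisenstein term, it has the shape $P^{1/2}\sum_{c\ge1}(\text{arithmetic factors from the }b,a,d,\chi\text{ sums})\int_{-\infty}^\infty\mathcal G_c(t)\,P^{\mp it}\,h_+(t)\,dt$, where $\mathcal G_c(t)$ gathers $\widetilde{\mathcal W}(1/2\mp it)$, the $\chi$-sum coming from $\overline{\varphi_{\mathfrak c}}(e^2,t)$, the values $L(1\pm2it,\cdot)^{-1}$ (holomorphic and polynomially bounded near $\mathbb R$ under GRH), and bounded powers $d^{2it},e^{-2it}$; $\mathcal G_c$ is holomorphic in a strip and decays rapidly in $|\Re t|$ because of $\widetilde{\mathcal W}$. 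I would insert $h_+(t)=\tfrac{2\pi i}{\sinh\pi t}\int_0^\infty(J_{2it}(\xi)-J_{-2it}(\xi))h_u(\xi)\tfrac{d\xi}{\xi}$ with $h_u(\xi)=J_{k-1}(\xi)W(\xi/X)\e{u\xi}$, interchange the $\xi$- and $t$-integrals (legitimate since $(J_{2it}-J_{-2it})/\sinh\pi t$ is $O(1)$ on $\mathbb R$ and $\mathcal G_c$ decays), and move the inner $t$-contour to $\Im t=\mp\vartheta$ for a fixed $\vartheta\in(0,1/2)$ close to $1/2$ (down for the $P^{-it}$ piece, up for the $P^{+it}$ piece; the two are exchanged by $t\mapsto-t$). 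No pole is crossed: the pole of $1/\sinh\pi t$ at $t=0$ is removed by the vanishing of $J_{2it}-J_{-2it}$ there, and the next poles lie at $|\Im t|=1$. On the shifted line $|P^{\mp it}|=P^{-\vartheta}$, the factor $1/\sinh\pi(\sigma\mp i\vartheta)$ is pole-free and bounded (since $|\sinh\pi(\sigma\mp i\vartheta)|^2=\sin^2\pi\vartheta+\sinh^2\pi\sigma\ge\sin^2\pi\vartheta>0$), and the Bessel orders become $\pm(2\vartheta+2i\sigma)$, for which one uses the standard uniform bounds ($\asymp\xi^{\pm2\vartheta}$ near $\xi=0$ with the expected gamma-factor decay in $\sigma$, and size $\xi^{-1/2}$ past the transition). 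Integrating against the rapidly decaying $\mathcal G_c(\sigma\mp i\vartheta)$ and then over $\xi$ gives $\int\mathcal G_c(t)P^{\mp it}h_+(t)\,dt\ll P^{-\vartheta}Q^\epsilon(1+|v|)^{O(1)}\min\{X^{k-1},X^{-1/2}\}(X^{2\vartheta}+X^{-2\vartheta})$. The arithmetic factors over $b$ are $\ll Q^\epsilon$, and writing $X=\sqrt P\,Y/c$ with $Y:=4\pi L_1L_2e/Q\ll Q^{\epsilon-1}$, the $c$-sum of $\min\{X^{k-1},X^{-1/2}\}(X^{2\vartheta}+X^{-2\vartheta})$ is $\ll\sqrt P\,Y$ provided $\vartheta<1/2$ (split at $X\asymp1$; the strict inequality is precisely what is needed for convergence of the tail when $k=3$). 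Hence $E_{princ}^{\mathrm{main}}\ll P^{1/2-\vartheta}\,\sqrt P\,Y\,Q^\epsilon(1+|v|)^{O(1)}\ll P^{\epsilon}\tfrac{\sqrt P}{Q}Q^\epsilon(1+|v|)^{O(1)}$, which is $\ll(1+|u|)^2(1+|v|)^2Q^\epsilon\bigl(P^{1/4+\epsilon}+\sqrt P/Q\bigr)$ since $P\ll Q^{4-\delta}$.

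The main obstacle is this last step. Lemma~\ref{lem:boundforhu} only bounds $h_+$ on the critical line and for tiny imaginary shifts, so to move the contour to $\Im t=\mp\vartheta$ one must work directly with the Bessel-transform representation, justify the interchange of integrals and the contour shift, and control the Bessel functions $J_{\pm(2\vartheta+2i\sigma)}(\xi)$ of complex order uniformly in $\sigma$ and in $X$ --- in particular the term $J_{-2\vartheta-2i\sigma}(\xi)$, which grows like $\xi^{-2\vartheta}$ as $\xi\to0$, must be shown to leave the $c$-sum convergent, which is what forces $\vartheta<1/2$ in the borderline case $k=3$. By contrast, the prime-sum contour shift and the accounting of the sums over $b$, $a$, $d$ and the characters $\chi$ are routine given the GRH inputs recorded in \S\ref{sec:prelimresults} and the bounds \eqref{eqn:variousboundsforSection7}.
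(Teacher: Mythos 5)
Your overall strategy---pulling out the residue $P^{1/2-it}\widetilde{V_1}(1/2-it)$ from the prime sum, bounding the remainder directly via GRH, and then exploiting the oscillation of $P^{\mp it}$ by shifting the $t$-integral off the real axis---is exactly the mechanism the paper uses, and your treatment of the remainder term matches Lemma~\ref{lem:sumoverpforPrincipal} and the bound for $E_{princ,err}$. The analytic control of $h_+$ at complex argument that you propose to re-derive from the Bessel-transform representation is also precisely what the paper records in Lemma~\ref{lem:boundh+}.

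However, the main-term contour shift you propose is not valid. You assert that the function you call $\mathcal G_c(t)$, which gathers in particular $L(1-2it,\chi^2\chi_0)^{-1}$ and $L(1+2it,\psi_0^2\chi_0)^{-1}$, is ``holomorphic in a strip'' so that you may push $\Im t$ to $\mp\vartheta$ with $\vartheta$ arbitrarily close to $1/2$. This is false: under GRH the nontrivial zeros of $L(w,\chi^2\chi_0)$ lie on $\Re w = 1/2$, and since $\Re(1-2it) = 1+2\Im t$, the factor $L(1-2it,\chi^2\chi_0)^{-1}$ has poles exactly on the line $\Im t = -1/4$. (The symmetric obstruction from $L(1+2it,\cdot)^{-1}$ appears for the companion term $E_2$.) Thus the $t$-contour can only be moved to $|\Im t| < 1/4$, i.e.\ $\Re s > 1/4$ in the variable $s = 1/2-it$. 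The paper's Lemma~\ref{lem:intovertforPrinc} moves to $\Re s = 1/4+\epsilon$ for precisely this reason, and invokes the GRH bound of Chirre for $L(1/2+\epsilon+it,\chi^2\chi_0)^{-1}$ on that line; the resulting saving is $|P^s| = P^{1/4+\epsilon}$, which is exactly the $P^{1/4+\epsilon}$ term appearing in the lemma statement. Your claimed stronger bound $P^{\epsilon}\sqrt{P}/Q$ for $E_{princ}^{\mathrm{main}}$ would, if correct, yield a stronger final result than Proposition~\ref{prop:Ctn} and propagate to a wider admissible support; the presence of $P^{1/4+\epsilon}$ in the statement you were asked to prove is the signal that the contour cannot be pushed past $\Im t = \mp 1/4$.
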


\begin{lem}\label{prop:nonprincipal} Let the notation be as above. 
Then 
$$E_{non-princ} \ll  (1+|u|)^2(1+|v|)^2  Q^{\epsilon} \frac{\sqrt{P}}{Q}.$$
\end{lem}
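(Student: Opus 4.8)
The decisive point is that in $E_{non-princ}$ the character $\psi$ is non-principal, so that $\psi(-\overline{p}\,\overline{b_0}a)=\overline{\psi}(p)\,\psi(-\overline{b_0}a)$ and the sum over $p$ becomes a prime sum twisted by a non-principal Dirichlet character of modulus dividing $b'\mid N$; under GRH such a sum is essentially bounded by a power of $\log$, hence tiny compared with its trivial size $\asymp\sqrt P$. The plan is therefore to start from the simplified expression for $E_1$ displayed above (with the Gamma factors and $\cosh(\pi t)$ already cancelled), restrict to $\psi\neq\psi_0$, extract the $p$-sum, bound it by GRH, and estimate all remaining arithmetic factors trivially, exactly as in the treatment of $\mathcal{CTN}_{p|N}$ culminating in \eqref{eqn:CTN_p|N_finalbound}. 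The non-principal part of $E_2$ is handled verbatim (there $\overline{\psi}(p)p^{-it}$ is replaced by $\psi(p)p^{it}$, still a non-principal twist).

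\textbf{Step 1: the $p$-sum.} In the simplified form of $E_1$ the only $p$-dependent factors are $\tfrac{\log p}{\sqrt p}V(p/P)\e{vp/P}$, the power $p^{-it}$, and $\psi(-\overline p\,\overline{b_0}a)$; collecting these for fixed $\psi\neq\psi_0$ leaves
\[
\sum_{p\nmid N}\frac{\log p}{\sqrt p}\,V\!\bfrac{p}{P}\e{v\tfrac pP}\,\overline{\psi}(p)\,p^{-it}.
\]
To produce the clean exponent $(1+|v|)^2$ I would separate the oscillation $\e{vp/P}$ by a Mellin transform exactly as in the proof of Lemma~\ref{lem:primesumMaassbdd}, reducing matters to $\sum_{(p,N)=1}\overline{\psi}(p)(\log p)V_0(p/P)p^{-1/2-i(t+t')}$ with $V_0$ a fixed cut-off and $\widetilde{\mathcal W}(it')\ll_A((1+|v|)/(1+|t'|))^A$. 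Since $\overline{\psi}$ is non-principal of modulus dividing $N\ll cQ^{\epsilon}$, Lemma~\ref{lem:CLee3.5} bounds this inner sum by $\ll Q^{\epsilon}(1+|t+t'|)^{\epsilon}+\log N$, and integrating against $\widetilde{\mathcal W}$ with $A=2$ gives
\[
\sum_{p\nmid N}\frac{\log p}{\sqrt p}\,V\!\bfrac{p}{P}\e{v\tfrac pP}\,\overline{\psi}(p)\,p^{-it}\ \ll\ (cQ)^{\epsilon}(1+|v|)^2(1+|t|)^{\epsilon},
\]
uniformly in $\psi$ and $t$.

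\textbf{Step 2: the arithmetic factors and the $b$-sum.} With the $p$-sum removed, I would bound everything else by absolute values: $|\overline{S(\tfrac{e^2}{(e^2,b')},0;b_0)}|\le(\tfrac{e^2}{(e^2,b')},b_0)$, $\sum_{d\mid e^2}1\ll e^{\epsilon}$, $\varphi(\tfrac{b'}{(e^2,b')})^{-1}\sum_{\chi}|\tau(\overline\chi)|\le\sqrt{b'}$, $\varphi(b')^{-1}\sum_{\psi\neq\psi_0}|\tau(\overline\psi)|\le\sqrt{b'}$, together with the $L$-value bound from \eqref{eqn:variousboundsforSection7} and $\sideset{}{^*}\sum_{a\bmod(b,N/b)}1\le(b,N/b)=b'$ — here using that, by Lemma~\ref{lem:KiralYounglem}(a), the summand vanishes unless $b'=(b,N/b)$. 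Collecting the powers of $b'$ and $b=b_0b'$ (and $e\ll Q^\epsilon$) against the prefactor $\tfrac1{Nb_0}$ produces, for each divisor $b$, a factor $\ll Q^{\epsilon}(1+|t|)^{\epsilon}\,\tfrac{(b')^{5/2}}{Nb^{1/2}}$. Since $b'=(b,N/b)$ forces $b'b\mid N$, one has $Nb^{1/2}\ge b'b^{3/2}\ge (b')^{5/2}$, so each such factor is $\le Q^{\epsilon}(1+|t|)^{\epsilon}$ and $\sum_{b\mid N}\tfrac{(b')^{5/2}}{Nb^{1/2}}\le\tau(N)\ll N^{\epsilon}$. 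Combined with Step 1 this yields
\[
E_{non-princ}\ \ll\ (1+|v|)^2 Q^{\epsilon}\sum_{c\ge1}c^{\epsilon}\int_{-\infty}^{\infty}(1+|t|)^{\epsilon}\,|h_+(t)|\,dt .
\]

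\textbf{Step 3: the $c$-sum and $t$-integral.} This is handled precisely as in the passage leading to \eqref{eqn:CTN_p|N_finalbound}: by Lemma~\ref{lem:boundforhu}(1), $\int(1+|t|)^{\epsilon}|h_+(t)|\,dt\ll(1+|u|)^{1+\epsilon}(1+|\log X|)\min\{X^{k-1},X^{-1/2}\}(1+X)^{1+\epsilon}$ with $X=4\pi L_1L_2 e\sqrt P/(cQ)$; splitting the sum over $c$ at the transition $c\asymp L_1L_2 e\sqrt P/Q$ and using $k\ge3$ gives $\sum_{c}c^{\epsilon}\int(1+|t|)^{\epsilon}|h_+(t)|\,dt\ll(1+|u|)^{1+\epsilon}Q^{\epsilon}\tfrac{\sqrt P}{Q}$. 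Hence $E_{non-princ}\ll(1+|u|)^2(1+|v|)^2Q^{\epsilon}\tfrac{\sqrt P}{Q}$, and the same estimate applies to the non-principal part of $E_2$. The only place requiring genuine care is the arithmetic bookkeeping in Step 2 — in particular the relation $b'b\mid N$, which keeps the sum over $b$ of size $N^{\epsilon}$ rather than a positive power of $N$; the rest is routine and parallels Lemma~\ref{lem:primesumMaassbdd} and the argument for $\mathcal{CTN}_{p|N}$.
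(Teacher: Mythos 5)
Your proposal is correct and follows essentially the same route as the paper: bound the non-principal prime sum under GRH via Lemma~\ref{lem:CLee3.5} combined with the Mellin-transform separation of $\e{vp/P}$ from Lemma~\ref{lem:primesumMaassbdd}, bound all remaining arithmetic factors trivially using \eqref{eqn:variousboundsforSection7}, and then close with the same $c$-sum/$t$-integral estimate as in \eqref{eqn:CTN_p|N_finalbound}. The only discrepancy is a small slip in the arithmetic bookkeeping of Step 2: the factors of $b'$ from the two Gauss-sum averages and from $\sideset{}{^*}\sum_a 1 \le b'$, against the prefactor $\tfrac{1}{Nb_0}$, give $\tfrac{(b')^2}{Nb_0}=\tfrac{(b')^3}{Nb}$ rather than $\tfrac{(b')^{5/2}}{Nb^{1/2}}$; the latter equals $\tfrac{(b')^2}{Nb_0^{1/2}}$ and is a weaker (larger) bound, but since it is still $\le 1$ and the subsequent estimate only uses $\sum_{b\mid N}1\le\tau(N)\ll N^{\epsilon}$, your conclusion goes through unchanged.
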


\subsection{Proof of Lemma \ref{prop:principal} -- The principal character}
Writing out $E_{princ}$ explicitly, we see that
\begin{align*}
E_{princ} &= \sum_{c \geq 1}\sum_{\substack{b|N\\ b' = (b, N/b)}} \frac{1}{Nb_0}\sideset{}{^*}\sum_{a \bmod (b,N/b)} \int_{-\infty}^{\infty}     e^{-2it}  \sum_{p \nmid N}  \frac{\log p}{\sqrt{p}} V\bfrac{p}{P}  \e{v\frac{p}{P}} p^{-it} \sum_{\substack{d|e^2 \\ (d,N/b)=1}} d^{2it} \\
& \times \overline{S(\tfrac{e^2}{(e^2,b')},0;b_0)}   \frac{1}{\varphi(\frac{b'}{(e^2,b')})} \sum_{\chi \bmod \frac{b'}{(e^2,b')} }  \frac{\overline{\chi}(-\tfrac{e^2}{(e^2,b')}\overline{b_0d^2}a)  \tau( \chi )  }{L(1-2it, \chi^2 \chi_0)} \\
& \times  \mu(b_0)  \frac{1}{\varphi( b' )}  \frac{\mu(b')}{L(1+2it,\psi_0^2\chi_0)}  h_+ (t) \,dt,
\end{align*}
where $\psi_0$ is the principal character modulo $b'$. We start by dealing with the sum over $p.$ 

\begin{lem} \label{lem:sumoverpforPrincipal}
Let $V_1(x) = V(x)\e{vx}$.  We have that
\begin{equation*}
\sum_{p\nmid N}  \frac{\log p}{p^{1/2+it}} V\bfrac{p}{P}  \e{v\frac{p}{P}} 
= P^{1/2 - it} \tilde{V_1}(1/2-it) + O(P^{\epsilon} (1+|v|)^2\log N).
\end{equation*}
\end{lem}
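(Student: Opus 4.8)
The plan is to reduce the sum to the classical von Mangoldt sum, evaluate that by Mellin inversion against $-\zeta'/\zeta$, extract the main term as the residue coming from the pole of $\zeta$, and control everything else via GRH and the rapid decay of the Mellin transform of $V_1$.

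First I would dispose of the coprimality condition by writing $\sum_{p\nmid N}=\sum_p-\sum_{p\mid N}$; since $|\e{vp/P}|=1$, $V$ is bounded, and $p^{-1/2}\le 1$, the second sum is $\ll\sum_{p\mid N}\log p\ll\log N$, which is the source of the $\log N$ in the error. Then I would pass from primes to prime powers through $\sum_p\log p\,p^{-1/2-it}V_1(p/P)=\sum_n\Lambda(n)n^{-1/2-it}V_1(n/P)-\sum_{k\ge 2}\sum_p\log p\,p^{-k(1/2+it)}V_1(p^k/P)$; the support of $V$ forces $p^k\asymp P$, so the tail is dominated by its $k=2$ part, which by Mertens is $\ll\sum_{p\asymp\sqrt P}(\log p)/p\ll 1$, well inside the claimed error.

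For $T:=\sum_n\Lambda(n)n^{-1/2-it}V_1(n/P)$ I would insert $V_1(n/P)=\frac{1}{2\pi i}\int_{(1)}\widetilde{V_1}(s)(P/n)^s\,ds$ and interchange — legitimate because $\Re(s+\tfrac12+it)=\tfrac32>1$ on that line — to obtain
\[
T=\frac{1}{2\pi i}\int_{(1)}\widetilde{V_1}(s)\,P^s\Bigl(-\frac{\zeta'}{\zeta}\bigl(s+\tfrac12+it\bigr)\Bigr)\,ds .
\]
I would then shift the contour to $\Re s=\delta$ with $\delta=1/\log P$, chosen both so that $P^\delta\ll 1$ and so that, under GRH, the new line stays strictly to the right of the zeros of $\zeta(s+\tfrac12+it)$, which sit on $\Re s=0$. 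The only singularity crossed is the simple pole of $-\zeta'/\zeta(w)$ at $w=1$, i.e.\ at $s=\tfrac12-it$, whose residue (the residue of $-\zeta'/\zeta$ at $1$ being $1$) is exactly $\widetilde{V_1}(\tfrac12-it)\,P^{1/2-it}$; this is precisely the stated main term. On the shifted line, GRH gives, in the manner of Lemma~\ref{lem:boundforL'overL}, the bound $-\frac{\zeta'}{\zeta}(\tfrac12+\delta+i\tau')\ll\delta^{-1}\log(|\tau'|+2)\ll(\log P)\log(|\tau'|+2)$, while repeated integration by parts together with $V_1^{(j)}(x)\ll(1+|v|)^j$ yields $\widetilde{V_1}(\delta+i\tau)\ll_A\bigl((1+|v|)/(1+|\tau|)\bigr)^A$ for every $A$ (exactly as for $\widetilde{\mathcal W}$ in the proof of Lemma~\ref{lem:primesumMaassbdd}). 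Carrying out the $\tau$-integration with $\tau'=\tau+t$ then bounds the shifted integral by $\ll P^\epsilon(1+|v|)^2$ up to a mild $\log(|t|+2)$ factor from the analytic conductor, which is harmless in the application (it is absorbed against the rapidly decaying $h_+$ in the range where the lemma is used, or simply by $\log N$ there). Combining this with the two preceding steps gives the asserted identity.

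The point requiring the most care is the last step: fitting the shifted integral into exactly the shape $O(P^\epsilon(1+|v|)^2\log N)$, which relies on pairing the GRH bound for $\zeta'/\zeta$ immediately to the right of the critical line with the rapid decay of $\widetilde{V_1}$ at heights $\gg 1+|v|$ (it is precisely this decay that keeps the effective height at $|t|$ rather than something growing, so that only a $\log(|t|+2)$, not a power of $|t|$, appears). The residue computation is the structural feature distinguishing this case from Lemma~\ref{lem:CLee3.5}: for the principal character the pole of $\zeta$ produces a genuine main term, whereas for non-principal characters there is none.
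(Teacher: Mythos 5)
Your proof is correct and takes essentially the same approach as the paper: Mellin inversion, extraction of the main term as the residue of $-\zeta'/\zeta$ at its pole, and a GRH bound for $\zeta'/\zeta$ on the shifted contour. The only cosmetic differences are that you strip off the $p\mid N$ and higher prime-power terms before entering the Mellin machinery rather than absorbing them into an error function $F(s)$ inside the integral, you shift to $\mathrm{Re}\,s = 1/\log P$ rather than $\mathrm{Re}\,w = \epsilon$, and you explicitly note the residual $\log(|t|+2)$ factor which the paper also implicitly relies on being harmless via the rapid decay of $h_+$ in the application.
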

\begin{proof}
We use Mellin inversion to write
\begin{align*}
\sum_{p\nmid N}  \frac{\log p}{p^{1/2+it}} V\bfrac{p}{P}  \e{v\frac{p}{P}} = \frac{1}{2\pi i} \int_{(\frac{1}{2}+\epsilon)} P^{w} \widetilde{V}_1(w) \sum_{p\nmid N}  \frac{\log p}{p^{1/2+it+w}} \,dw,
\end{align*}
where $\widetilde{V}_1$ is the Mellin transform of $V_1$. Since $V_1^{(l)}(x) \ll (1+|v|)^l$ for all $l\ge 0$, $\widetilde{V_1}(s) \ll \bfrac{1+|v|}{1+|s|}^A$ for any $A \ge 0$. Now the Euler product formula for $\zeta(s)$ implies
\begin{align*}
\sum_{p\nmid N}  \frac{\log p}{p^{1/2+it+w}}
& = \sum_{p }  \frac{\log p}{p^{1/2+it+w}} -\sum_{p|N }  \frac{\log p}{p^{1/2+it+w}} \\
& = \sum_{n }  \frac{\Lambda(n)}{n^{1/2+it+w}} - \sum_{m=2}^{\infty}\sum_p \frac{\log p}{(p^m)^{1/2+it+w}}-\sum_{p|N }  \frac{\log p}{p^{1/2+it+w}} \\
& = -\frac{\zeta'}{\zeta}(\tfrac{1}{2}+it+w) + F(s),
\end{align*}
say, where $F(s) \ll \log N$ uniformly in the region Re$(w)\geq \epsilon$ and for all $t$.

Shifting contours to $\textrm{Re}(w)  = \epsilon$ gives the result upon picking up the pole at $w = 1/2 - it$ and using the bound $\widetilde{V_1}(s) \ll \bfrac{1+|v|}{1+|s|}^2$ to bound the integral along  $\textrm{Re}(w)  = \epsilon$.
\end{proof}

For clarity, we record the following bound for $h_+(z)$, where $z$ is a complex number. 

\begin{lem} \label{lem:boundh+}Recall that 
\begin{align*}
h_u(\xi)  = J_{k-1}(\xi)W\bfrac{\xi}{X} \e{u\xi}, 
\end{align*}
where $X = \frac{L_1L_2\sqrt{Pe^2}}{cQ}$ and $W$ is compactly supported in (a, b), where $a > 0.$ Then we have for real $\gamma$ and real $\delta \neq 0$, where $|\delta| < \frac 12$, that
    \begin{equation*}
        h_+(\gamma + i \delta) \ll (1 + |u|)  \min\left\{X^{k-1-2|\delta| }, \frac 1{\sqrt X}\right\}.
    \end{equation*}
\end{lem}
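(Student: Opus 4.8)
\emph{Proof strategy.} The plan is to work from the integral representation of the Bessel transform in Lemma~\ref{lem:kuznetsov}, now evaluated at the complex point $r=\gamma+i\delta$:
\[
h_+(\gamma+i\delta)=\frac{2\pi i}{\sinh\!\big(\pi(\gamma+i\delta)\big)}\int_0^{\infty}\big(J_{2ir}(\xi)-J_{-2ir}(\xi)\big)\,h_u(\xi)\,\frac{d\xi}{\xi},
\]
where $J_{2ir}$ and $J_{-2ir}$ now have orders $-2\delta+2i\gamma$ and $2\delta-2i\gamma$, i.e.\ real parts $-2\delta$ and $2\delta$. Since $W$ is compactly supported in $(a,b)$ with $a>0$, the integrand is supported on $\xi\asymp X$, where $|h_u(\xi)|\le|J_{k-1}(\xi)|\ll\min\{X^{k-1},X^{-1/2}\}$ by Lemma~\ref{jbessel}; the phase $\e{u\xi}$ will be used only through $|\e{u\xi}|=1$, so the factor $1+|u|$ in the statement is not actually essential here (it is included only to match the normalization of Lemmas~\ref{lem:boundforhu} and \ref{lem:boundforcoefficientafterKuznetsov}). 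The one structural fact about the prefactor is that $|\sinh(\pi(\gamma+i\delta))|^2=\sinh^2(\pi\gamma)+\sin^2(\pi\delta)\ge\sin^2(\pi\delta)$, so it is bounded away from $0$ for fixed $\delta\neq0$ and is $\asymp\tfrac12 e^{\pi|\gamma|}$ for $|\gamma|$ large.

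The next step is to bound the complex-order Bessel functions on $\xi\asymp X$, separating $X\ll1$ from $X\gg1$. For $X\ll1$ the power series gives $|J_{2ir}(\xi)|\ll\xi^{-2\delta}/|\Gamma(1-2\delta+2i\gamma)|$ and $|J_{-2ir}(\xi)|\ll\xi^{2\delta}/|\Gamma(1+2\delta-2i\gamma)|$, and Stirling's formula ($1/|\Gamma(\sigma+it)|\asymp|t|^{1/2-\sigma}e^{\pi|t|/2}$ for $|t|$ large) shows that the resulting $e^{\pi|\gamma|}$ is cancelled on dividing by $|\sinh(\pi(\gamma+i\delta))|$; for $\xi\asymp X\ll1$ the larger of the two contributions carries the exponent $-2|\delta|$. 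For $X\gg1$ I would instead use the classical large-argument expansion $J_{\mu}(\xi)=\sqrt{2/(\pi\xi)}\,\cos(\xi-\tfrac{\mu\pi}{2}-\tfrac{\pi}{4})+O(\xi^{-3/2})$, uniform for $\mu$ in a bounded vertical strip, whose leading coefficient is $\ll\xi^{-1/2}\cosh(\pi|\gamma|)\ll\xi^{-1/2}e^{\pi|\gamma|}$; again the $e^{\pi|\gamma|}$ is absorbed by $1/|\sinh(\pi(\gamma+i\delta))|$.

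Feeding these bounds into the integral over $\xi\asymp X$ then yields, for $X\ll1$,
\[
h_+(\gamma+i\delta)\ll_{\delta}\int_{\xi\asymp X}\xi^{-2|\delta|}X^{k-1}\,\frac{d\xi}{\xi}\asymp X^{k-1-2|\delta|},
\]
and, for $X\gg1$,
\[
h_+(\gamma+i\delta)\ll_{\delta}\int_{\xi\asymp X}\xi^{-1/2}X^{-1/2}\,\frac{d\xi}{\xi}\asymp X^{-1}.
\]
Because $k\ge3$ forces $k-1-2|\delta|>-\tfrac12$ (indeed $>0$), in both cases the right side is $\le\min\{X^{k-1-2|\delta|},X^{-1/2}\}\le(1+|u|)\min\{X^{k-1-2|\delta|},X^{-1/2}\}$, which is the claim. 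A more ``paper-style'' alternative is to put $g(\xi)=\max\{X^{1-k},\sqrt X\}\,h_u(\xi)$ as in the proof of Lemma~\ref{lem:boundforhu}, so that $h_+(r)=\min\{X^{k-1},X^{-1/2}\}\,g_+(r)$, and to deduce $g_+(\gamma+i\delta)\ll(1+|u|)(X^{-2|\delta|}+1)$ from a horizontal-strip analogue of Lemma~\ref{lem:boundforcoefficientafterKuznetsov}; there the factor $X^{-2|\delta|}$ appears exactly as $(X/Z)^{-2|t|}$ does in part~(2) of that lemma, recording the behaviour of $J_{2ir}$ near the origin.

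The main obstacle is the uniform handling of the complex-order Bessel functions together with the singular prefactor $1/\sinh(\pi(\gamma+i\delta))$: one must check carefully that the $e^{\pi|\gamma|}$ growth—coming from $1/\Gamma$ in the small-argument regime and from the oscillatory factor $\cos(\xi-\tfrac{\mu\pi}{2}-\tfrac{\pi}{4})$ in the large-argument regime—is exactly compensated by the decay of $1/|\sinh(\pi(\gamma+i\delta))|$, so that the implied constant depends only on $\delta$. (For $|\delta|<\tfrac14$ this is fully uniform in $\gamma$; for $\tfrac14\le|\delta|<\tfrac12$ one incurs at worst a harmless polynomial factor in $1+|\gamma|$.) This is the single step requiring genuine care, and it is cleanest carried out through the $g_+$-reduction above, invoking the strip version of the Bessel-transform estimates rather than re-running the oscillatory-integral analysis by hand.
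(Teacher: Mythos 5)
Your proposal takes a genuinely different route from the paper, and in its present form it has gaps.

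The paper's proof does not estimate the complex-order Bessel functions $J_{\pm 2ir}$ directly. Instead, following Lemma~7.1 of Deshouillers--Iwaniec \cite{DI}, it rewrites the transform as a double integral
\[
h_+(z)=-8\int_0^{\infty}\!\!\int_0^{\infty}\cos(x\cosh y)\cos(2zy)\,h_u(x)\,dy\,\frac{dx}{x},
\]
and integrates by parts once in $x$. This step is not optional: without it the $y$-integral of $\cos(2zy)$, whose modulus is $\asymp e^{2|\delta|y}$, does not converge absolutely, and it is precisely the factor $1/\cosh y$ produced by the integration by parts that tames it. The derivative of $h_u$ is where the $(1+|u|)$ factor comes from, so it is not ``just a normalization to match Lemmas~\ref{lem:boundforhu} and \ref{lem:boundforcoefficientafterKuznetsov}''---it is an honest artifact of the argument. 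The representation also handles the prefactor $1/\sinh(\pi z)$ and the $\gamma$-uniformity at once: there is no residual $e^{\pi|\gamma|}$ to cancel, and no hidden $(1+|\gamma|)$-dependence.

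Your route, by contrast, bounds $J_{\pm 2ir}(\xi)$ pointwise on $\xi\asymp X$ and divides by $|\sinh(\pi(\gamma+i\delta))|$. This is where the trouble is. For $X\gg1$ you invoke the Hankel expansion $J_{\mu}(\xi)=\sqrt{2/(\pi\xi)}\cos(\xi-\tfrac{\mu\pi}{2}-\tfrac\pi4)+O(\xi^{-3/2})$ and assert it is ``uniform for $\mu$ in a bounded vertical strip.'' That is not correct: here $\mu=2i(\gamma+i\delta)=-2\delta+2i\gamma$, so $\operatorname{Re}\mu$ is bounded but $\operatorname{Im}\mu=2\gamma$ is unbounded, and the error term in Hankel's expansion carries coefficients of size $\mu^2/\xi,\ \mu^4/\xi^2,\dots$. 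The expansion is therefore only uniform for $|\mu|\ll\sqrt{\xi}$, i.e.\ $|\gamma|\ll\sqrt{X}$, and your estimate has nothing to say in the regime $|\gamma|\gg\sqrt X$. Since the lemma must be uniform in $\gamma$ (it is integrated over $\gamma$ in Lemma~\ref{lem:intovertforPrinc}), this is a real gap, not a bookkeeping detail. Separately, for $\tfrac14\le|\delta|<\tfrac12$ your Stirling computation produces a $(1+|\gamma|)^{2|\delta|-1/2}$ factor that is absent from the statement; calling it ``harmless'' without an argument does not establish the lemma as written. Finally, the ``paper-style'' fallback you gesture at---a horizontal-strip analogue of Lemma~\ref{lem:boundforcoefficientafterKuznetsov}(2)---is not something the paper provides: that lemma treats $\phi_+(it)$ for real $t\in(-\tfrac14,\tfrac14)$ (i.e.\ $\gamma=0$), not $\phi_+(\gamma+i\delta)$ with $\gamma$ ranging over $\mathbb{R}$, so you cannot simply cite it.

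In short: the approach you outline would need either a genuinely uniform-in-$\mu$ Bessel estimate (uniform asymptotics/Debye in the regime $|\Im\mu|\gg\sqrt\xi$) or, more economically, the Deshouillers--Iwaniec double-integral representation the paper actually uses. The latter sidesteps every issue above and is why the $(1+|u|)$ factor appears.
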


\begin{proof} We will slightly modify the proof of Lemma 7.1 in \cite{DI} and show it here for completeness. Write $z = \gamma + i\delta.$ We have
\begin{align*}
    h_+(z) &= \frac{2\pi i}{\sinh(\pi z)} \int_0^{\infty} (J_{2iz} (x) - J_{-2iz}(x)) h_u(x) \frac{dx}{x} \\
    &= -8 \int_0^{\infty} \int_0^{\infty} \cos(x\cosh(y)) \cos(2zy) h_u(x) \> dy \> \frac{dx}{x}.
\end{align*}
Consider the integral over $x$. Integrating by parts, we see that
\es{\label{eqn:intcoshu} \int_0^{\infty} \cos(x\cosh(y))  \frac{h_u(x)}{x} \> dx = -\int_0^{\infty} \left( \frac{h_u(x)}{x}\right)' \frac{\sin(x\cosh(y))}{\cosh(y)} \> dx. }

As in the proof of Lemma~\ref{lem:boundforhu}, we consider two cases depending on how large $X$ is. When $X \ll 1$, we write 
$$ J_{k-1}(\xi) = \sum_{\ell = 0}^{\infty} (-1)^{\ell} \frac{(\xi/2)^{2\ell + k - 1}}{\ell ! (\ell + k - 1)!}.$$
Therefore, for $aX < x < bX,$
$$ \left( \frac{h_u(x)}{x}\right)' \ll   X^{k - 1}  \left( (1 + |u|) +  \frac 1X\right)  \frac{1}{X} \ll   X^{k - 3}  \left( (1 + |u|)\right).   $$
We write $\frac{\sin(x\cosh(y))}{X\cosh(y)} \ll \min\left(1, \frac{e^{-y}}{X} \right)$, so that \eqref{eqn:intcoshu} is bounded by 
$$ X^{k-1} \left( 1 + |u| \right) \min \left\{ 1,  \frac {e^{-y}}{X}\right\} . $$
 From the above, the assumption $X \ll 1$, and the fact that $\cos(2zy) \ll e^{2|\delta|y}$, we arrive at
 $$ h_+(z) \ll \int_0^{\infty} e^{2|\delta| y} X^{k-1} (1 + |u|)\min \left\{ 1,  \frac {e^{-y}}X\right\} \> dy \ll X^{k-1} (1 + |u|) X^{-2|\delta|}. $$

When $X \gg 1$, from Lemma~\ref{jbessel}, we may write
\est{ J_{k-1}(\xi) =\frac{1}{ \sqrt{2\pi \xi}}\left(W_k(\xi) \e{\frac{\xi}{2\pi}-\frac{k}{4}+\frac{1}{8}} + \overline{W}_k(\xi) \e{-\frac{\xi}{2\pi}+\frac{k}{4}-\frac{1}{8} }\right),} where $W_k^{(j)}(x)\ll_{j,k} x^{-j}.$ 
Therefore for $aX < x < bX,$
$$ \left( \frac{h_u(x)}{x}\right)' \ll   \frac{1}{\sqrt X}  \left( (1 + |u|) +  \frac 1X\right)  \frac{1}{X}.   $$
Thus \eqref{eqn:intcoshu} is bounded by 
$$ \sqrt X ( 1 + |u|) \min \left\{ 1,  \frac {e^{-y}}{X}\right\} . $$
 From the above, the assumption $X \gg 1$, and the fact that $\cos(2zy) \ll e^{2|\delta|y}$, we deduce that
 $$ h_+(z) \ll \int_0^{\infty} e^{2|\delta| y} \sqrt X (1 + |u|)\min \left\{ 1,  \frac {e^{-y}}X\right\} \> dy \ll \frac 1{\sqrt X} (1 + |u|). $$
Combining both cases $X\ll 1$ and $X\gg1 $, we obtain the desired bound. 
\end{proof}

We now bound the integral over $t$ in the following lemma.
\begin{lem} \label{lem:intovertforPrinc}
    Let
    \begin{align*}
        I = \int_{-\infty}^{\infty}     e^{-2it}  d^{2it} \frac{1}{L(1-2it, \chi^2 \chi_0) L(1+2it,\psi_0^2\chi_0)}  h_+ (t) \tilde{V_1}(1/2-it) P^{1/2 - it} dt.
    \end{align*}

    We have that

    \begin{align*}
        I \ll  q_0^{\epsilon}\left( \frac ed\right)^{- \frac 12 + 2\epsilon} P^{\frac 14 + \epsilon} (1 + |u|)  \min\left\{X^{k- \frac 32 + 2\epsilon}, \frac 1{\sqrt X}\right\}  (1 + |v|)^2 , 
    \end{align*}
    where $q_0$ is the conductor of $\chi^2 \chi_0.$
\end{lem}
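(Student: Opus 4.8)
The plan is to move the line of integration for $I$ downward, off the real axis, exploiting the smallness of $P$ and of $d^2/(e^2P)$: on the line $\mathrm{Im}(t)=-\eta$ one has $|P^{1/2-it}|=P^{1/2-\eta}$ and $|e^{-2it}d^{2it}|=(d/e)^{2\eta}$, both of which shrink as $\eta$ increases, while the other factors stay under control. The only genuine constraint is that GRH guarantees $L(1-2it,\chi^2\chi_0)$ is zero-free (so $1/L$ is analytic and polynomially bounded) only for $\mathrm{Re}(1-2it)>1/2$, i.e.\ for $\mathrm{Im}(t)>-1/4$; so I would take $\eta=\tfrac14-\epsilon$, which is exactly what produces the exponents $P^{1/4+\epsilon}$ and $(e/d)^{-1/2+2\epsilon}$ in the claimed bound.

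First I would justify the contour shift from $\mathrm{Im}(t)=0$ to $\mathrm{Im}(t)=-\eta$. The function $\widetilde{V_1}$ is entire, and by the integral representation $h_+(z)=-8\int_0^\infty\int_0^\infty\cos(x\cosh y)\cos(2zy)h_u(x)\,dy\,\frac{dx}{x}$ used in the proof of Lemma~\ref{lem:boundh+}, $h_+$ is analytic in the strip $|\mathrm{Im}(z)|\le\eta$. On GRH, both $L(1-2it,\chi^2\chi_0)$ and $L(1+2it,\psi_0^2\chi_0)$ are zero-free for $-\eta\le\mathrm{Im}(t)\le0$, so their reciprocals are analytic there; the only possible pole of either $L$-function in this range sits at $t=0$ (occurring precisely when the relevant character is principal), where it merely produces a zero of the reciprocal and hence contributes no residue. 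The super-polynomial decay of $\widetilde{V_1}(1/2-it)$ in $|\mathrm{Re}(t)|$, uniform in the strip, together with at most polynomial growth of the remaining factors, makes the two horizontal connecting segments at $\pm\infty$ vanish. Thus $I$ equals the same integral taken over $\mathrm{Im}(t)=-\eta$.

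Then I would bound the shifted integrand factor by factor on $t=\sigma-i\eta$. One has $|P^{1/2-it}|=P^{1/2-\eta}=P^{1/4+\epsilon}$ and $|e^{-2it}d^{2it}|=(d/e)^{2\eta}=(e/d)^{-1/2+2\epsilon}$. Since $\psi_0^2\chi_0$ is a principal character, $L(s,\psi_0^2\chi_0)$ is $\zeta(s)$ times a finite Euler product over the primes dividing its modulus, so on $\mathrm{Re}(s)=1+2\eta=\tfrac32-2\epsilon>1$ we get $1/L(1+2it,\psi_0^2\chi_0)\ll1$ by absolute convergence, with no appeal to GRH. For the other factor, on $\mathrm{Re}(1-2it)=\tfrac12+2\epsilon$, GRH for $L(\cdot,\chi^2\chi_0)$ gives $1/L(1-2it,\chi^2\chi_0)\ll(q_0(1+|t|))^\epsilon$. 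Lemma~\ref{lem:boundh+}, applied with $\delta=-\eta$ (legitimate since $|\delta|=\tfrac14-\epsilon<\tfrac12$), gives $h_+(\sigma-i\eta)\ll(1+|u|)\min\{X^{k-3/2+2\epsilon},1/\sqrt{X}\}$. Finally the bound $\widetilde{V_1}(w)\ll((1+|v|)/(1+|w|))^2$ yields $\int_{-\infty}^{\infty}|\widetilde{V_1}(\tfrac12-\eta-i\sigma)|\,(1+|\sigma|)^\epsilon\,d\sigma\ll(1+|v|)^2$. Multiplying all of these together produces precisely the asserted bound for $I$.

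The main obstacle is simply the legitimacy of the contour shift: one must check that no pole is crossed — the delicate point being the principal-character cases, where $L$ has a pole at $t=0$ on the original line, which however gives only a zero of $1/L$ and hence no residue — and that the horizontal segments genuinely drop out, which follows from the rapid decay of $\widetilde{V_1}$. After that the argument is bookkeeping, and the $\epsilon$'s appearing in the exponents of $P$ and $e/d$ are forced by the fact that GRH does not permit pushing $\eta$ all the way to $\tfrac14$.
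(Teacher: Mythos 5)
Your proposal is correct and is essentially the paper's argument: the paper sets $s=1/2-it$ and shifts the $s$-contour from $\mathrm{Re}(s)=1/2$ to $\mathrm{Re}(s)=1/4+\epsilon$, which is the identical deformation to your shift of the $t$-contour from $\mathrm{Im}(t)=0$ to $\mathrm{Im}(t)=-(1/4-\epsilon)$; both then invoke Lemma~\ref{lem:boundh+} for $h_+$, the GRH (Chirre) bound for $1/L(\cdot,\chi^2\chi_0)$ on the line $\mathrm{Re}=1/2+2\epsilon$, absolute convergence for the other $L$-factor, and the decay of $\widetilde{V_1}$ to get exactly the stated exponents. Your additional remarks about the analyticity of $h_+$, the absence of crossed poles (the principal-character pole of $L$ at $t=0$ only produces a zero of $1/L$), and the vanishing of the horizontal segments are correct and make explicit some points the paper leaves implicit.
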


\begin{proof}
    Let $s = 1/2 - it$, so that $t = i(s-1/2)$.  We have that
    \begin{align*}
        I = i \int_{(1/2)}     \bfrac{e}{d}^{2s-1} \frac{1}{L(2s, \chi^2 \chi_0) L(2-2s,\psi_0^2\chi_0)}  h_+ (i(s-1/2)) \tilde{V_1}(s) P^{s} ds.
    \end{align*}
We move the line of integration to Re$(s) = \frac 14 + \epsilon$. Note that if $s = \sigma + ir$, then $h_+(i(s-1/2)) = h_+(-r +i(\sigma-1/2)). $ We use the bound for $h_+(z)$ in Lemma~\ref{lem:boundh+} and the bound
$$L\left(\frac 12 + \epsilon + it, \chi^2 \chi_0\right)^{-1} \ll (q_0 (|t| + 1))^{\epsilon} 
$$ 
from Theorem~1 in \cite{Chirre} to deduce that
\est{I &\ll \left( \frac ed\right)^{- \frac 12 + 2\epsilon} P^{\frac 14 + \epsilon} (1 + |u|)  \min\left\{X^{k- \frac 32 + 2\epsilon }, \frac 1{\sqrt X}\right\} \\
& \ \  \ \ \ \ \ \ \ \ \hskip 1in  \times \int_{-\infty}^{\infty}  \tilde{V_1}\left(\frac 14 + \epsilon + it\right) (q_0(|t| + 1))^{\epsilon} \> dt  \\ 
&\ll \left( \frac ed\right)^{- \frac 12 + 2\epsilon} P^{\frac 14 + \epsilon} (1 + |u|)  \min\left\{X^{k- \frac 32 + 2\epsilon }, \frac 1{\sqrt X}\right\} q_0^{\epsilon} (1 + |v|)^{2}, }
as desired.
\end{proof}

We are now ready to bound $E_{princ}$ in Lemma~\ref{prop:principal}. By Lemma~\ref{lem:sumoverpforPrincipal}, we have 
\begin{align*}
E_{princ} &= \sum_{c \geq 1}\sum_{\substack{b|N\\ \text{\eqref{eqn: factorb}}}} \frac{1}{Nb_0}\sideset{}{^*}\sum_{a \bmod (b,N/b)}   \int_{-\infty}^{\infty}     e^{-2it}  \left[P^{\frac 12 - it} \tilde V_1\left( \frac12 - it\right) + O\left(P^{\epsilon} (1 + |v|)^2 \log N\right)\right] \\
& \times \sum_{\substack{d|e^2 \\ (d,N/b)=1}} d^{2it}\overline{S(\tfrac{e^2}{(e^2,b')},0;b_0)}   \frac{1}{\varphi(\frac{b'}{(e^2,b')})} \sum_{\chi \bmod \frac{b'}{(e^2,b')} }  \frac{\overline{\chi}(-\tfrac{e^2}{(e^2,b')}\overline{b_0d^2}a)  \tau( \chi )  }{L(1-2it, \chi^2 \chi_0)} \\
& \times  \mu(b_0)  \frac{1}{\varphi( b' )}  \frac{\mu(b')}{L(1+2it,\psi_0^2\chi_0)}  h_+ (t) \,dt. 
\end{align*}
First, we consider the contribution from the error term $O\left(P^{\epsilon} (1 + |v|)^2 \log N\right)$, say $E_{princ, err}$. We use the bound for $h_+(t)$ from Lemma~\ref{lem:boundforhu}~(1) and the bounds in \eqref{eqn:variousboundsforSection7} to deduce that
\es{ \label{eqn:Eprinc_err}E_{princ, err} &\ll (1 + |v|)^2 e^{2 + \epsilon} \sum_{c \geq 1} \sum_{b | N} \sideset{}{^*}\sum_{a \bmod (b,N/b)}(PN)^{\epsilon}  \frac{1}{Nb_0} F^{1 + \epsilon} \min\left\{ X^{k - 1}, \frac 1{\sqrt X} \right\} (1 + |\log X|)  \frac{\sqrt{b'}}{\phi(b')} \\ &\ll (1 + |u|)^2 (1 + |v|)^2 Q^{\epsilon} \frac{\sqrt P}{Q}}
upon recalling that $b' = (b, N/b)$, $e \ll Q^{\epsilon}$, and $X = \frac{L_1L_2 \sqrt {Pe^2}}{cQ}.$
For the main contribution (call it $E_{princ, main}$), we use Lemma~\ref{lem:intovertforPrinc} to bound the integral over $t$ and obtain that 
\est{E_{princ, main} &\ll  \sum_{c \geq 1} \sum_{b | N} \sideset{}{^*}\sum_{a \bmod (b,N/b)} \frac{1}{Nb_0} \sum_{d | e^2} q_0^{\epsilon}\left( \frac ed\right)^{- \frac 12 + 2\epsilon} P^{\frac 14 + \epsilon} (1 + |u|)\mathcal  F(c)  (1 + |v|)^2   e^2 \frac{\sqrt{b'}}{\phi(b')} \\
&\ll (1 + |v|)^2(1 + |u|)^2 P^{\frac 14 + \epsilon} Q^{\epsilon} \sum_{c} \sum_{b | N} \frac {(b, N/b)^{3/2}}{Nb} \mathcal F(c), }
where we let $\mathcal F(c) := \min\left\{X^{k- \frac 32 + 2\epsilon }, \frac 1{\sqrt X}\right\}$ and we recall that $b' = (b, N/b)$ and $b_0 = b/(b, N/b)$. Moreover, recall that $N = crdL_1 := c \alpha,$ where $\alpha = rdL_1 \ll Q^{\epsilon}.$ Considering the sum over $c$, we have 
\est{\sum_{c} \sum_{b | N} \frac {(b, N/b)^{3/2}}{c \alpha b}  \mathcal F(c) &\leq \sum_{c} \sum_{b_1 | \alpha} \sum_{b_2 | c} \frac{(b_1b_2, \frac{c\alpha}{b_1b_2})^{3/2}}{c\alpha b_1b_2} \mathcal F(c) \\
&\ll \sum_{c} \sum_{b_2 | c} \frac{(b_2, c/b_2)^{3/2}}{cb_2} \mathcal F(c)\sum_{b_1 | \alpha} \frac{b_1^{3/2} (\alpha/ b_1)^{3/2}}{b_1 \alpha} \\
&\ll \alpha^{1/2 + \epsilon} \sum_{b_2} \frac{1}{b_2^2} \sum_c \frac{(b_2, c)^{3/2}}{c} \mathcal F(cb_2) \\
&\ll  \alpha^{1/2 + \epsilon} \sum_{b_2} \frac{1}{b_2^2} \sum_{\ell | b_2} \ell^{3/2} \sum_c \frac{1}{c\ell} \mathcal F(c\ell b_2)  \\
&\ll \alpha^{1/2 + \epsilon} \sum_{b_2} \frac{1}{b_2^2} \sum_{\ell | b_2} \ell^{1/2} \ll \alpha^{1/2 + \epsilon} \ll Q^{\epsilon}.}
Thus \es{\label{eqn:Eprin_main} E_{princ, main} \ll  (1 + |v|)^2(1 + |u|)^2 P^{\frac 14 + \epsilon} Q^{\epsilon}.}
Combining \eqref{eqn:Eprinc_err} and \eqref{eqn:Eprin_main}, we arrive at the lemma.

\subsection{Proof of Lemma \ref{prop:nonprincipal} -- Non-principal characters}
Recall that 
\begin{align*}
 E_{non-princ} &= \sum_{c\geq 1} \sum_{\substack{b|N\\ \text{\eqref{eqn: factorb}}}} \frac{1}{Nb_0}\sideset{}{^*}\sum_{a \bmod (b,N/b)} \int_{-\infty}^{\infty}   \sum_{p\nmid N}  \frac{\log p}{p^{1/2+it}} V\bfrac{p}{P}  \e{v\frac{p}{P}}    e^{-2it} \sum_{\substack{d|e^2 \\ (d,N/b)=1}} d^{2it} \\
& \times \overline{S(\tfrac{e^2}{(e^2,b')},0;b_0)}   \frac{1}{\varphi(\frac{b'}{(e^2,b')})} \sum_{\chi \bmod \frac{b'}{(e^2,b')} }  \frac{\overline{\chi}(-\tfrac{e^2}{(e^2,b')}\overline{b_0d^2}a)  \tau( \chi )  }{L(1-2it, \chi^2 \chi_0)} \\
& \times  \mu(b_0)  \frac{1}{\varphi( b' )}  \sum_{\substack{\psi \bmod b'\\ \psi\neq \psi_0} }  \frac{\psi(-\overline{p}\overline{b_0}a) \tau(\overline{\psi}) }{L(1+2it,\overline{\psi^2}\chi_0)}  h_+ (t) \,dt.
\end{align*}
Since $\psi$ is a non-principal character modulo $b'$, it follows from Lemma~\ref{lem:CLee3.5} that if $V_0$ is a smooth function supported on a compact subinterval of $(0,\infty)$, then
\begin{align*}
 \sum_{p \nmid N} \frac{\psi(\overline{p}) \log p}{p^{\frac{1}{2}+it}} V_0\left( \frac{p}{P}\right) \ll  \log^2 (b'+|t|) + \log N.
\end{align*}
Thus, by the argument in the proof of Lemma~\ref{lem:primesumMaassbdd}, we have
\begin{align*}
\sum_{p \nmid N}  \frac{\psi(\overline{p})\log p}{p^{1/2+it}} V\bfrac{p}{P}  \e{v\frac{p}{P}} \ll  (1+|v|)^2 [\log^2 (b'+|t|) + \log N].
\end{align*}
It follows from this and the bounds in \eqref{eqn:variousboundsforSection7} that 
\begin{align*}
E_{non-princ}&\ll (1+|v|)^2 e^{2+\epsilon} 
\sum_{c\geq 1} \sum_{\substack{b|N\\ \text{\eqref{eqn: factorb}}}} \frac{b'}{N^{1-\epsilon} b_0}\sideset{}{^*}\sum_{a \bmod (b,N/b)} \int_{-\infty}^{\infty}    (1+|t|)^{\epsilon} | h_+ (t) | \,dt \\
&\ll (1+|v|)^2 e^{2+\epsilon} 
\sum_{c\geq 1} \sum_{b|N} \frac{(b,N/b)^2}{N^{1-\epsilon} b_0} \int_{-\infty}^{\infty}    (1+|t|)^{\epsilon} | h_+ (t) | \,dt,
\end{align*}
where we recall that $b'=(b,N/b)$. We then use $e \ll Q^{\epsilon}$, the bound for $h_+(t)$ from Lemma~\ref{lem:boundforhu}, and  
\begin{align*}
(b,N/b)^2 \leq b\cdot \frac{N}{b} = N,
\end{align*}
and proceed with the same argument as in \eqref{eqn:CTN_p|N_finalbound} to obtain the desired bound.



\section*{Acknowledgement}
S.B. acknowledges support from a NSF grant DMS-1854398. V.C. acknowledges support from a Simons Travel Grant for Mathematicians and NSF grant DMS-2101806. X.L. acknowledges support from Simons Grants 524790 and 962494 and NSF grant DMS-2302672. Parts of this work were done while S.B. was visiting Kansas State University. He would like to express his gratitude to Kansas State University for the invitation and the pleasant atmosphere for working.



\end{document}